\numberwithin{equation}{section}
\numberwithin{figure}{section}
\theoremstyle{plain}
\newtheorem{thm}{\protect\theoremname}
\theoremstyle{definition}
\newtheorem{defn}[thm]{\protect\definitionname}
\theoremstyle{remark}
\newtheorem{rem}[thm]{\protect\remarkname}
\theoremstyle{plain}
\newtheorem{prop}{Proposition}
\newtheorem{lemma}[thm]{Lemma}
  \providecommand{\corollaryname}{Corollary}
  \providecommand{\definitionname}{Definition}
\providecommand{\remarkname}{Remark}
\providecommand{\theoremname}{Theorem}
\begin{document}

\title{High Mach number limit for Korteweg fluids with density dependent viscosity}

\author{Matteo Caggio{*}, Donatella Donatelli{*}}

\date{{*} Department of Information Engineering, Computer Science and Mathematics, University of L'Aquila, Italy \\ \smallskip \small matteo.caggio@univaq.it, donatella.donatelli@univaq.it}

\maketitle
\begin{abstract}
	The aim of this paper is to investigate the regime of high Mach number flows for compressible barotropic fluids of Korteweg type with density dependent viscosity. In particular we consider the  models for  isothermal capillary and quantum compressible fluids. For the capillary case we prove the existence of weak solutions and related properties for the system without pressure, and the convergence of the solution in the high Mach number limit. This latter is proved also in the quantum case for which a weak-strong uniqueness analysis is also discussed in the framework of the so-called "augmented" version of the system. Moreover,  as byproduct of our results,  in the case of a capillary fluid with a  special choice of the  initial velocity datum, we obtain an interesting property concerning the  propagation of vacuum zones.
\end{abstract}
\smallskip

\textbf{Key words}: barotropic compressible fluids, density dependent viscosity, Navier-Stokes-Korteweg model, capillary and quantum fluids, high Mach number flows, "augmented" system, weak-strong uniqueness.

\tableofcontents{}

\newpage{}

\section{Introduction} \label{intro}

We consider a compressible Navier--Stokes system for a barotropic fluid with density dependent viscosity in a periodic domain $\Omega=\mathbb{T}^{d}
$ ($d=2$ or 3)
\begin{equation} \label{cont.}
\partial_{t}\varrho+\textrm{div}\left(\varrho u\right)=0,
\end{equation}
\begin{equation} \label{mom.}
\partial_{t}\left(\varrho u\right)+\textrm{div}\left(\varrho u\otimes u\right)-\textrm{div}\left(2\mu\left(\varrho\right)D\left(u\right)\right)-\nabla\left(\lambda\left(\varrho\right)\textrm{div}u\right)+\frac{\nabla p(\varrho)}{\mathcal{M}a^{2}}=f,
\end{equation}
with initial data
$$\left.\left(\varrho,u\right)\right|_{t=0}=\left(\varrho_{0},u_{0}\right).$$ 
Here, $u=u(t,x)$ and $\varrho=\varrho(t,x)$ stand for the velocity field and density of the fluid, functions of the spatial position $x\in\mathbb{R}^{3}$ and time $t$. The quantity $D\left(u\right)=(\nabla u+\nabla^{t}u)/2
$ represents the strain tensor and $f$ is a given external forcing that will be specified later. We denote by $\lambda\left(\varrho\right)$ and $\mu\left(\varrho\right)$ the two viscosity coefficients, functions of the fluid density. We consider a pressure $p$ of the type $p\left(\varrho\right)=a\varrho^{\gamma}$ where $a>0$ and $\gamma>1$ are physical constant quantities. The range of $\gamma$ will be specified later.

The above system is introduced in its non-dimensional form and it contains the Mach number $\mathcal{M}a$, while the other fluid mechanics numbers are set equal to one. 

The Mach number, physically, is given by the ratio of the reference fluid velocity and the sound speed. In general the value of the Mach number depends on the conditions of the physical phenomenon under consideration  and it is related to the compressibility of the fluid according to its low or high values. In the case of a low Mach number regime the speed of sound tends to infinity while the pressure becomes almost constant and doesn't generates density variations, as a consequence compressibility can be ignored and the final asymptotical  physical state  is the incompressible one. Conversely  if  the fluid speed increases beyond the sound speed (as in the so called supersonic flows), then the Mach number is high and the compressibility effects have to be taken into consideration.

Since  in  many real world phenomena, such as ocean flows, astrophysics flows, fluid flows in engineering devices, the fluid velocities are smaller compared to the sound speed, the study of the low Mach number regime has been of large interest.  In the literature a large number of authors have dealt with this case for different types of fluids, see for instance \cite{CN17}, \cite{CDM13}, \cite{DG99}, \cite{DFN10},  \cite{DM12a}, \cite{DM16}, \cite{DBN18}, \cite{Donatelli-Trivisa-2008}, \cite{L-P.L.M98}  and references therein.

On the other hand, although the high Mach number  values are of great importance in the dynamics of aircrafts, this regime and its related limit analysis turns out to be less studied in comparison to the opposite asymptotic limit. 

Previous results in this context have been obtained by Haspot \cite{BH-1} and Liang \cite{L}. For a pressure behaving like a power law, namely $p(\varrho)=\varrho^{\gamma}$, with $\gamma>1$, Liang \cite{L} proved the convergence to the pressureless system for the following form of the stress tensor and more general form of viscosity coefficients 
\begin{equation} \label{stress}
S\left(u\right)=\mu\left(\varrho\right)\nabla u+\lambda\left(\varrho\right)\textrm{div}u
\end{equation}
\begin{equation} \label{mu_l}
\mu\left(\varrho\right)>0, \ \ \mu\left(\varrho\right)+d\lambda\left(\varrho\right)\geq0
\end{equation}
\begin{equation} \label{alpha}
\mu\left(\varrho\right)=\nu\varrho^{\alpha}, \ \ \lambda\left(\varrho\right)=2\left(\alpha-1\right)\nu\varrho^{\alpha}, \ \ \alpha\geq1.
\end{equation}
As remarked by Liang \cite{L}, more general viscosities are allowed at the cost of the stress tensor having the form (\ref{stress}). Moreover, Liang \cite{L} also remarked that for $\alpha=1$ (and also $\alpha<1$ with slight modification of the arguments) the convergence holds for the symmetric stress tensor $\textrm{div}\left(\varrho\frac{\nabla u+\nabla^{t}u}{2}\right)$ where the existence of a global weak solutions for the system (\ref{cont.}) - (\ref{mom.}) is proved in Li and Xin \cite{LX} (see also Vasseur and Yu \cite{VY}). A recent extension of these results for non-linear density dependent viscosities have been obtained by Bresch et al. \cite{BD-5}. Note that, in \cite{L}, the author assumed the existence of the global weak solution for the pressureless system in the spirit of Haspot \cite{BH-1}. Moreover, under some assumptions on the initial velocity field and for $\alpha\geq3/2$, Liang \cite{L} proved a rate of convergence for the density field in a suitable Lebesgue norm, while Haspot and Zatorska \cite{HZ} obtain a rate of convergence for the density for the one--dimensional Cauchy problem for $1<\alpha\leq3/2$. In a similar framework, Haspot \cite{BH-1} proved the high compressible limit in the sense of distribution and discussed the global existence for the pressureless system. In particular, the author constructed explicit global weak solutions where $\varrho$ is a solution of a porous media or heat equation according to the different values of $\alpha$ in (\ref{alpha}) and with irrotational initial velocity expressed as a gradient of a given potential, function of the initial density.
%

In this paper we consider  the following high Mach number regime, namely the case  when 
$$\mathcal{M}a=\varepsilon^{-1/2}, \qquad \varepsilon\to 0 $$
 in the context of capillary and quantum fluids, namely we will deal with the following Korteweg system of equations

\begin{equation} \label{cont._1}
\partial_{t}\varrho+\textrm{div}\left(\varrho u\right)=0,
\end{equation}
\[
\partial_{t}\left(\varrho u\right)+\textrm{div}\left(\varrho u\otimes u\right)-2\nu\textrm{div}\left(\varrho D\left(u\right)\right)+\varepsilon\nabla p(\varrho)
\]
\begin{equation} \label{mom._1}=\varrho\nabla\left(\kappa\left(\varrho\right)\Delta\varrho+\frac{1}{2}\kappa^{\prime}\left(\varrho\right)\left|\nabla\varrho\right|^{2}\right),
\end{equation}
with initial data
$$\left.\left(\varrho,u\right)\right|_{t=0}=\left(\varrho_{0},u_{0}\right),
$$ 
where we set $\mu\left(\varrho\right)=\nu\varrho$ with $\nu$ constant viscosity coefficients and $\lambda\left(\varrho\right)=0$. Here, the right-hand-side of (\ref{mom._1}) represents the so-called Korteweg tensor with $\kappa=\kappa(\varrho)$ surface tension. The analysis is motivated by a recent attention given to the system (\ref{cont._1}) - (\ref{mom._1}) (see for example Antonelli and Spirito \cite{AS_3}, \cite{AS_2}, \cite{AS_1}, \cite{AS}, and Bresch et al. \cite{BGL}, \cite{BD-5}).  

For a constant value of $\kappa$, namely $\kappa(\varrho)=\kappa$, the above system describes the motion of a capillary fluids for which the Korteweg tensor becomes
$$\varrho\nabla_{x}\left(\kappa\left(\varrho\right)\Delta_{x}\varrho+\frac{1}{2}\kappa^{\prime}\left(\varrho\right)\left|\nabla_{x}\varrho\right|^{2}\right)=\kappa\varrho\nabla\Delta\varrho,$$
while the choice $\kappa(\varrho)=\kappa^2/\varrho$ leads to the so-called quantum Bohm identity for which we deal with quantum fluids, $$\varrho\nabla_{x}\left(\kappa\left(\varrho\right)\Delta_{x}\varrho+\frac{1}{2}\kappa^{\prime}\left(\varrho\right)\left|\nabla_{x}\varrho\right|^{2}\right)=2\kappa^2\varrho\nabla\left(\frac{\Delta\sqrt{\varrho}}{\sqrt{\varrho}}\right).$$
For a more detailed discussion on Korteweg models for capillary and quantum fluids, the reader can refer to Benzoni-Gavage \cite{BG}.



For $\varepsilon=0$, the system (\ref{cont._1}) and (\ref{mom._1}) reduces to the  pressureless Navier-Stokes equations of Korteweg type. In the following, our analysis will focus on this system with $\kappa(\rho)=\kappa$ or $\kappa(\rho)=\kappa^{2}/\rho$. 

For capillary fluids, we will study the convergence of the  limit as $\varepsilon\rightarrow0$ to the weak solutions for the pressureless system, moreover we investigate also the existence of weak solutions for the pressureless system  and  some properties connected with the heat equation for a proper choice of the initial data. It is worth to point out that as a byproduct of this analysis we get an interesting result concerning the vacuum states of the density. Indeed in Theorem \ref{heat_eq} we will be able to prove that  for an initial density  $\rho_{0}$ strictly positive and an irrotational initial velocity, $u_{0}=\nabla\phi(\rho_{0})$  vacuum is not allowed for any $t>0$. In other words, in this special case, under the high compressibility effects and when the pressure effects are not relevant, then vacuum states may not appear in the non vacuum regions $(\rho_{0}>0)$. 

For quantum fluids the convergence in the limit of $\varepsilon\rightarrow0$ will be  discussed and a weak-strong uniqueness analysis will be performed. Weak-strong uniqueness means that a weak and strong solution emanating from the same initial data coincide as long as the latter exists. We perform a weak strong-uniqueness analysis for the pressureless system assuming the existence of the strong solution. This last analysis will follow the recent result of Bresch et al. \cite{BGL}  in which an ``augmented'' version of the quantum Navier-Stokes system combined with a relative energy inequality approach (see for example Feireisl et al. \cite{FSN}, \cite{FJN}) is used.




We would like to mention that, without the presence of the Korteweg tensor, the study of the high compressible limit in the context of density dependent viscosity fluids is related to the lack of compactness of the density. More precisely, in the case of constant viscosities it appears impossible to pass to the limit for $\varepsilon\rightarrow0$ because the $L^{\gamma}$-bound for the density coming from the pressure is no longer conserved. Only the $L^{1}$-bound related to mass conservation is preserved, and is not sufficient to pass to the limit since it does not provide enough compactness information (see Haspot \cite{BH-1} and Liang \cite{L}). 



This manuscript is organized as follows. In Section 2 we investigate the high compressible limit for capillary fluids and we study some properties of the related pressureless system. Section 3 is devoted to the analysis of the quantum case. Finally, we end the paper with some concluding remarks in Section 4 and an Appendix where a stability result for a the pressureless system in the quantum case is shown.

\section{Capillary fluids}
In this section we deal with the capillary case, namely when $\kappa\left(\varrho\right)=\kappa$ in (\ref{mom._1}), then the system (\ref{cont._1}) - (\ref{mom._1}) reads as
\begin{equation} \label{cont._1_cap}
\partial_{t}\varrho+\textrm{div}\left(\varrho u\right)=0,
\end{equation}
\begin{equation} \label{mom._1_cap}
\partial_{t}\left(\varrho u\right)+\textrm{div}\left(\varrho u\otimes u\right)-2\nu\textrm{div}\left(\varrho D\left(u\right)\right)+\varepsilon\nabla p(\varrho)=\kappa\varrho\nabla\Delta\varrho,
\end{equation}
with initial data
$$\left.\left(\varrho,u\right)\right|_{t=0}=\left(\varrho_{0},u_{0}\right).$$ 

For fixed $\varepsilon>0$, the global existence of weak solutions without smallness assumption on the data was studied by Bresch et al. \cite{BDL} in a periodic domain $\Omega=\mathbb{T}^{d}$ ($d=2,3$) under the following assumptions on the pressure field

\begin{equation} \label{press.}
p(\varrho)\geq0, \ \ p^{\prime}(\varrho)\geq0, \ \ \textrm{and} \ \ \Gamma\left(\varrho\right)\leq A\varrho^{\eta}\Pi\left(\varrho\right) \ \ \textrm{\ for \ large \ enough} \ \ \varrho,       
\end{equation}
where
\begin{equation} \label{press._1}
\Pi\left(\varrho\right)=\varrho\int_{\overline{\varrho}}^{\varrho}\frac{p\left(\tau\right)}{\tau^2}d\tau, \ \ \Gamma\left(\varrho\right)=\int_{\overline{\varrho}}^{\varrho}\tau p^{\prime}\left(\tau\right)d\tau,
\end{equation}
with $\overline{\varrho}$ constant reference density, $A$ positive constant and $\eta<+\infty$ when $d=2$ and $\eta<4$ when $d=3$. Note that, the assumption on large densities is satisfied in particular when the pressure behaves like a power law at infinity. In particular, we have
\begin{equation} \label{pi}
	\Pi\left(\varrho\right)=\varrho\int_{0}^{\varrho}\frac{p\left(\tau\right)}{\tau^2}d\tau=\frac{\varrho^{\gamma}}{\gamma-1}, \ \ 
	\Gamma\left(\varrho\right)=\int_{0}^{\varrho}\tau p^{\prime}\left(\tau\right)d\tau=\frac{\gamma}{\gamma+1}\varrho^{\gamma+1},
\end{equation}
where we have set $\overline{\varrho}=0$. More precisely, Bresch et al. \cite{BDL} proved the existence of weak solutions in the following class of regularity
\begin{equation} \label{weak_sol_reg}
\left\{ \begin{array}{c}
\varrho\in L^{2}\left(0,T;H^{2}\left(\Omega\right)\right)\cap L^{\infty}\left(0,T;H^{1}\left(\Omega\right)\right),\\
\nabla\sqrt{\varrho}, \ \sqrt{\varrho}u\in L^{\infty}\left(0,T;L^{2}\left(\Omega\right)\right),\\
\end{array}\right.
\end{equation}
satisfying the energy inequality
$$
\sup_{t\in(0,T)}\int_{\Omega}\frac{1}{2}\left(\left|\Lambda\right|^{2}
+\varepsilon\Pi\left(\varrho\right)
+\kappa\left|\nabla\varrho\right|^{2}\right)dx+2\nu\int_0^T\int_{\Omega}\left|\mathcal{S} \right|^2dxdt
$$
\begin{equation} \label{ei_weak}
\leq \int_{\Omega}\frac{1}{2}\left(\varrho_{0}\left|u_{0}\right|^{2}+\varepsilon\Pi\left(\varrho_{0}\right)+\kappa\left|\nabla\varrho_{0}\right|^{2}\right)dx
\end{equation}
with $\Lambda$ such that $\varrho u=\sqrt{\varrho}\Lambda$, and $\mathcal{S}\in L^2((0,T)\times\Omega)$ such that $\sqrt{\varrho}\mathcal{S}=\mbox{Symm}(\nabla(\varrho u)-2\nabla\sqrt{\varrho}\otimes\sqrt{\varrho} u)$ in $\mathcal{D}'$,
and the so-called Bresch-Desjardins entropy inequality (see Bresch et al. \cite{BDL})
$$
\sup_{t\in(0,T)}\int_{\Omega} \left(\frac{1}{2}\left|\Lambda+\nu\nabla\sqrt{\varrho}\right|^{2}
+\varepsilon\Pi(\varrho)
+\frac{\kappa}{2}\left|\nabla\varrho\right|^{2}\right)dx
$$
$$
+4\nu\int_0^T\int_{\Omega} \varepsilon p'(\varrho) \left| \nabla \sqrt{\varrho} \right|^2 dxdt
+\nu\kappa\int_0^T\int_{\Omega}\left|\nabla\nabla\varrho\right|^{2}dxdt
$$
$$
+2\nu\int_0^T\int_{\Omega}\left| \mathcal{A}\right|^2 dxdt
$$
\begin{equation} \label{ineq.}
\leq \int_{\Omega}\frac{1}{2}\left(\left|\sqrt{\varrho_{0}}u_{0}+
\nu\nabla\sqrt{\varrho_{0}}\right|^{2}+\kappa\left|\nabla\varrho_{0}\right|^{2}\right)dx,
\end{equation}
with $\mathcal{A}\in L^2((0,T)\times\Omega)$ such that $\sqrt{\varrho}\mathcal{A}=\mbox{Asymm}(\nabla(\varrho u)-2\nabla\sqrt{\varrho}\otimes\sqrt{\varrho} u)$ in $\mathcal{D}'$,
with the following conditions on the initial data
\begin{equation} \label{in_en}
\int_{\Omega}\frac{1}{2}\left(\varrho_{0}\left|u_{0}\right|^{2}+\varepsilon\Pi\left(\varrho_{0}\right)+\kappa\left|\nabla\varrho_{0}\right|^{2}\right)dx<+\infty,
\end{equation}
\begin{equation} \label{f_BDL}
\frac{1}{2}\int_{\Omega}\varrho_{0}\left|\nu\nabla\log\varrho_{0}\right|^{2}<+\infty.
\end{equation}
They proved the following Theorem.
\begin{thm} \label{BDJ}
	Let $d=2$ or $3$. Then, there exists a global weak solution $(\varrho,u)$ of (\ref{cont._1_cap}) and (\ref{mom._1_cap}), that means a solution satisfying (\ref{weak_sol_reg}) - (\ref{f_BDL}), with the continuity equation satisfying the following weak formulation for all $\varphi\in C_{c}^{\infty}\left(\left[0,T\right];C^{\infty}\left(\Omega\right)\right)
	$ such that $\varphi(T,\cdot)=0$,
	\begin{equation} \label{cont_BDL}
	\int_{\Omega}\varrho_{0}\cdot\varphi\left(0,\cdot\right)dx+\int_{0}^{T}\int_{\Omega}\varrho\cdot\partial_{t}\varphi+\int_{0}^{T}\int_{\Omega}\left(\sqrt{\varrho}\sqrt{\varrho}u\right):\nabla\varphi dxdt=0;
	\end{equation}
	and the momentum equations satisfying the following weak formulation for all $\varphi\in C_{c}^{\infty}\left(\left[0,T\right];C^{\infty}\left(\Omega\right)\right)
	$ such that $\varphi(T,\cdot)=0$,
	$$\int_{\Omega}\varrho_{0}u_{0}\cdot\varrho_{0}\varphi\left(0,\cdot\right)dx+\int_{0}^{T}\int_{\Omega}\left[\varrho^{2}u\cdot\partial_{t}\varphi+\frac{3}{2}\varrho u\otimes\varrho u:\nabla\varphi\right.
$$
$$
+\sqrt{\varrho} u \otimes \sqrt{\varrho} u : \varphi \nabla \varrho 
+\varepsilon\Gamma\left(\varrho\right)\textrm{div}\varphi
$$
\begin{equation} \label{wf_mom._BDL}
\left. -2\nu \varrho D\left(u\right) \cdot \nabla(\varrho\varphi)-\kappa\varrho^{2}\Delta\varrho\textrm{div}\varphi-2\kappa\varrho\left(\varphi\cdot\nabla\varrho\right)\Delta\varrho\right]dxdt=0,
\end{equation}
where, for $(i,l)=1,2,3$, the viscous term reads as follows
$$
-2\nu \int_{0}^{T}\int_{\Omega} \varrho D\left(u\right) \cdot \nabla(\varrho\varphi) dx dt=
$$
$$
-\nu\int_{0}^{T}\int_{\Omega}\sqrt{\varrho}\sqrt{\varrho}u_i
(\partial_{li}\varrho \varphi_l
+\partial_{i}\varrho \partial_{l}\varphi_l
+\partial_{l}\varrho \partial_{i}\varphi_l
+\varrho \partial_{li}\varphi_l)dxdt
$$
$$
-\nu\int_{0}^{T}\int_{\Omega}\sqrt{\varrho}\sqrt{\varrho}u_l(\partial_{ii}\varrho \varphi_l + 2\partial_{i}\varrho \partial_{i} \varphi_l + \varrho \partial_{ii} \varphi_l)dxdt
$$
$$
-2\nu\int_{0}^{T}\int_{\Omega}\sqrt{\varrho}u_l \partial_{i}\sqrt{\varrho}
(\partial_{i}\varrho \varphi_l
+\varrho \partial_{i}\varphi_l)dxdt
$$
\begin{equation} \label{viscous_t_BDL}
-2\nu\int_{0}^{T}\int_{\Omega}\sqrt{\varrho}u_i \partial_{l}\sqrt{\varrho} 
(\partial_{i}\varrho \varphi_l
+\varrho \partial_{i}\varphi_l)dxdt;
\end{equation}
\end{thm}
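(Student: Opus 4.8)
\noindent\textit{Proof strategy.} The statement naturally splits into two independent pieces, and I would treat them separately. The first is the existence of a global weak solution $(\varrho,u)$ in the class (\ref{weak_sol_reg}) satisfying the energy inequality (\ref{ei_weak}), the Bresch--Desjardins entropy inequality (\ref{ineq.}) and the conditions (\ref{in_en})--(\ref{f_BDL}) on the data; for this I would simply invoke the theorem of Bresch et al.\ \cite{BDL}. Its architecture is the one now customary for systems with density-dependent, vacuum-degenerate viscosity: one regularizes (\ref{cont._1_cap})--(\ref{mom._1_cap}) so as to restore parabolicity (artificial higher-order capillarity, drag and cold-pressure terms), propagates along the approximation the uniform bounds issued by the regularized energy and BD-entropy balances --- $\varrho$ in $L^{\infty}(0,T;H^{1})\cap L^{2}(0,T;H^{2})$, $\sqrt{\varrho}\,u$ and $\nabla\sqrt{\varrho}$ in $L^{\infty}(0,T;L^{2})$, $\nabla\nabla\varrho$ in $L^{2}((0,T)\times\Omega)$, the tensors $\mathcal{S},\mathcal{A}$ in $L^{2}$, and the Mellet--Vasseur-type weighted bound on $\sqrt{\varrho}\,u$ --- and passes to the limit. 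The delicate point is this last passage: with no pressure-driven compactness of $\varrho$ available in this regime, the convergence in the convective and capillary terms must be obtained through the variables $\sqrt{\varrho}$, $\sqrt{\varrho}\,u$, $\mathcal{S}$, $\mathcal{A}$, which is precisely why the momentum balance is tested against $\varrho\varphi$ and not against $\varphi$. This is the genuinely hard part of the construction, and I take it as given.

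Granting it, the second piece --- the explicit weak formulations --- is essentially bookkeeping. The continuity identity (\ref{cont_BDL}) follows by multiplying (\ref{cont._1_cap}) by $\varphi$, integrating in $t$ and $x$, transferring the time derivative onto $\varphi$ (this creates the initial term) and transferring one spatial derivative off the flux, which is licit since $\varrho u=\sqrt{\varrho}\cdot\sqrt{\varrho}\,u\in L^{\infty}(0,T;L^{12/7}(\Omega))$ by $H^{1}\hookrightarrow L^{6}$. For the momentum equation I test (\ref{mom._1_cap}) against $\varrho\varphi$ and handle the five groups of terms in turn. Integrating $\int\partial_{t}(\varrho u)\cdot\varrho\varphi$ by parts in time produces the initial term $\int_{\Omega}\varrho_{0}u_{0}\cdot\varrho_{0}\varphi(0,\cdot)$, the term $\int\varrho^{2}u\cdot\partial_{t}\varphi$, and a remainder $-\int\varrho u\,\partial_{t}\varrho\cdot\varphi$ into which I substitute the continuity equation $\partial_{t}\varrho=-\textrm{div}(\varrho u)$ --- and, where it shortens the algebra, its renormalized form $\partial_{t}(\varrho^{2})+\textrm{div}(\varrho^{2}u)+\varrho^{2}\textrm{div}\,u=0$, admissible because $\varrho\in L^{2}(0,T;H^{2})$ --- and integrate by parts in $x$; combining with the convective term, written $\textrm{div}(\varrho u\otimes u)\cdot\varrho\varphi=-\int(\sqrt{\varrho}u\otimes\sqrt{\varrho}u):\nabla(\varrho\varphi)$, and regrouping, the time-plus-convective contribution collapses to the displayed combination $\int\varrho^{2}u\cdot\partial_{t}\varphi+\tfrac{3}{2}\int\varrho u\otimes\varrho u:\nabla\varphi+\int\sqrt{\varrho}u\otimes\sqrt{\varrho}u:\varphi\nabla\varrho$. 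The pressure term is rewritten as $\varepsilon\int\nabla p(\varrho)\cdot\varrho\varphi=\varepsilon\int\big(\int_{0}^{\varrho}p(\tau)\,d\tau-\varrho p(\varrho)\big)\textrm{div}\varphi$, and since $\int_{0}^{\varrho}p(\tau)\,d\tau-\varrho p(\varrho)=-\Gamma(\varrho)$ (one integration by parts in $\tau$, with $\Gamma$ as in (\ref{pi})) this is the term $\varepsilon\int\Gamma(\varrho)\textrm{div}\varphi$ of (\ref{wf_mom._BDL}), up to the overall sign in which the identity is written.

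The Korteweg term $\kappa\int\varrho\nabla\Delta\varrho\cdot\varrho\varphi$ needs only one integration by parts, moving the outermost gradient off $\Delta\varrho$ onto $\varrho^{2}\varphi$, to produce the two capillary terms of (\ref{wf_mom._BDL}). Finally, for the degenerate viscous term $-2\nu\,\textrm{div}(\varrho D(u))\cdot\varrho\varphi=-2\nu\int\varrho D(u):\nabla(\varrho\varphi)$ I use the distributional identity $\varrho D(u)=\mathrm{Symm}\big(\nabla(\varrho u)-2\nabla\sqrt{\varrho}\otimes\sqrt{\varrho}\,u\big)=\sqrt{\varrho}\,\mathcal{S}$ and integrate by parts once more, pushing all derivatives onto $\varrho\varphi$; sorting the resulting contractions by whether a derivative lands on $\varphi$, on $\nabla\varphi$ or on $\nabla^{2}\varphi$ reproduces exactly the four lines of (\ref{viscous_t_BDL}), now carrying only the combinations $\sqrt{\varrho}\cdot\sqrt{\varrho}\,u=\varrho u$, $\nabla\varrho$, $\nabla\sqrt{\varrho}$, $\nabla\nabla\varrho$ and the derivatives of $\varphi$.

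The step I expect to cost most of the work --- having granted the existence --- is not any single manipulation but the systematic verification that each term produced in (\ref{wf_mom._BDL})--(\ref{viscous_t_BDL}) is a genuine element of $L^{1}((0,T)\times\Omega)$, so that all the integrations by parts above are justified. The routine terms come from $\varrho\in L^{\infty}(0,T;H^{1})\cap L^{2}(0,T;H^{2})\hookrightarrow L^{\infty}(0,T;L^{6})\cap L^{2}(0,T;L^{\infty})$, $\nabla\varrho\in L^{\infty}(0,T;L^{2})\cap L^{2}(0,T;L^{6})$, $\sqrt{\varrho}\,u\in L^{\infty}(0,T;L^{2})$ and H\"older's inequality; the ones needing care are the trilinear pieces of (\ref{viscous_t_BDL}) of the type $\sqrt{\varrho}\,u\,\nabla\sqrt{\varrho}\,\nabla\varrho$, which are first re-expressed through $\nabla\varrho=2\sqrt{\varrho}\,\nabla\sqrt{\varrho}$ and then controlled by invoking the full list of a priori bounds of \cite{BDL}, in particular the time integrability coming from $\varrho\in L^{2}(0,T;H^{2})$ and the extra dissipation present in (\ref{ineq.}). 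Once this is in place, the identities (\ref{cont_BDL}), (\ref{wf_mom._BDL}) and (\ref{viscous_t_BDL}) hold by the integrations by parts described, and the theorem follows.
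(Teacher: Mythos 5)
Your proposal is correct and matches the paper's treatment: Theorem \ref{BDJ} is quoted there as the result of Bresch et al.\ \cite{BDL} with no proof given, exactly as you do by delegating the existence and compactness argument to \cite{BDL}. Your additional bookkeeping — deriving (\ref{cont_BDL}), (\ref{wf_mom._BDL}) and (\ref{viscous_t_BDL}) by testing the momentum equation against $\varrho\varphi$, rewriting the pressure via $\varrho\nabla p(\varrho)=\nabla\Gamma(\varrho)$ and the viscous term via $\sqrt{\varrho}\,\mathcal{S}$ — is consistent with how the weak formulation of \cite{BDL} is obtained and introduces no gap.
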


The authors in \cite{BDL} used a particular notion of weak solutions that has the advantage to avoid some mathematical difficulties which arise in the definition of the velocity field in the vacuum region.
Indeed, even though weak solutions for the mass and momentum equations can be written in the sense of distribution, Bresch et al. \cite{BDL} (see also J\" ungel \cite{J}) are unable to prove compactness of solutions for vanishing densities. Moreover, as also remarked by J\" ungel \cite{J}, the regularity $\sqrt{\varrho}u\in L^{\infty}\left(0,T;L^{2}\right)$, does not imply compactness for (an approximation of) the convective term $\sqrt{\varrho}u\otimes\sqrt{\varrho}u$. This is the reason why they consider test functions of the form $\varrho\varphi$ in the momentum equation, which are supported on sets of positive $\varrho$. With this choice, the regularity of $\varrho$, which could be proven to belong to $\varrho\in L^{2}\left(0,T;H^{2}\right)$, allows to recover the usual momentum equation on sets of $\varrho>0$. Note that weak formulation with test functions depending on the solutions itself was already introduced in Desjardins and Esteban \cite{DE} for a fluid-structure interaction problem and is used in the context of quantum fluids by J\"ungel \cite{J}. 

In this paper for the definition of weak solution for the pressureless system (see Definition \ref{ws}) we  follow Bresch et al. \cite{BDL} in terms of test functions of the type $\varrho \varphi$, but we   differ from the original definition in \cite{BDL} for the following reasons. The "degenerate" viscosity prevents the velocity field to be uniquely determined in the vacuum region. Indeed, the system (\ref{cont._eps_cap})-(\ref{mom._eps_cap}) lacks bounds for $u$ (neither the velocity field, nor its gradient are defined a.e. in $\Omega$). Consequently, as suggested by Antonelli and Spirito \cite{AS_3}, \cite{AS_2},   the problem is best analyzed in terms of the variables $\sqrt{\varrho}$ and $\Lambda = \sqrt{\varrho} u$. Similarly, for the momentum we have $\varrho u = \sqrt{\varrho} \Lambda$ (however, for the sake of consistency with the literature concerning the Navier-Stokes-Korteweg system, the weak solutions is defined in terms of $\sqrt{\varrho}$ and $\sqrt{\varrho u}$). 
For the above reasons, the viscous stress tensor in the energy inequalities (\ref{ei_weak}) and \eqref{ee} is thought as 
$$
\varrho D(u) = \sqrt{\varrho} \mathcal{S},
$$
where $\sqrt{\varrho}\mathcal{S}=\mbox{Symm}(\nabla(\varrho u)-2\nabla\sqrt{\varrho}\otimes\sqrt{\varrho} u)$. Indeed, it is not clear if weak solutions satisfy the energy inequality in the usual sense, namely
$$
\frac{d}{dt}\int_{\Omega}\frac{1}{2}\left(\varrho \left|u \right|^{2}
+\kappa\left|\nabla\varrho\right|^{2}\right)
+2\nu\int_{\Omega}\varrho\left|D(u)\right|^2dxdt \leq 0.
$$
Antonelli and Spirito \cite{AS_3}, \cite{AS_2} considered the Navier-Stokes-Korteweg system for a viscous compressible fluid with capillarity effects in three space dimensions. They prove compactness and existence of finite energy weak solutions for large initial data, where vacuum regions are allowed in the definition of weak solutions and no additional damping terms are considered. 
We would like to stress that the result of Antonelli and Spirito \cite{AS_3} was announced during the developing of the present work. For this reason and for the time being, we have chosen to develop the analysis using the notion of weak solutions with test functions of the form $\varrho\varphi$. As mentioned in the end of this paper, it will be a matter of future research the extension of this result in the framework of standard weak solutions.

Our purpose is to study the convergence, as $\varepsilon\rightarrow0$,  of the weak solutions of  the system (\ref{cont._1_cap}) - (\ref{mom._1_cap})  towards the pressureless model and to investigate its main properties. Therefore we start  our analysis by proving the  existence of weak solutions of the pressureless system in the case of general initial data. We want to point out that this result is of interest by himself. Indeed, as shown in Theorem \ref{heat_eq} if we start with a density initial datum $\rho_{0}>0$ far from vacuum and an initial velocity function of the form $u_{0}=\nabla\phi(\rho_{0})$, then at any time $t>0$ the density stays far from vacuum. In other words, when the effects of the pressure are not relevant, then the property of no vacuum is preserved at any time $t>0$.  Moreover, in the case of properly chosen initial data, see Theorem \ref{heat_eq_2} we provide the existence of a smooth density function satisfying the heat equation.

\subsection{Weak solutions and main results} \label{results_weak}
For $\varepsilon=0$ the system (\ref{cont._1_cap}) - (\ref{mom._1_cap}) reads
\begin{equation} \label{cont._eps_cap}
\partial_{t}\varrho+\textrm{div}\left(\varrho u\right)=0,
\end{equation}
\begin{equation} \label{mom._eps_cap}
\partial_{t}\left(\varrho u\right)+\textrm{div}\left(\varrho u\otimes u\right)-2\nu\textrm{div}\left(\varrho D\left(u\right)\right)=\kappa\varrho\nabla\Delta\varrho,
\end{equation}
with 
$$\left.\left(\varrho,u\right)\right|_{t=0}=\left(\varrho_{0},u_{0}\right).$$ 

Now, we define the notion of weak solutions for the system (\ref{cont._eps_cap})-(\ref{mom._eps_cap}) we are dealing with
and we introduce the main results.

\begin{defn} \label{ws}
We say that $(\varrho,u)$ is a weak solution of (\ref{cont._eps_cap})-(\ref{mom._eps_cap}) on $(0,T)$ if and only if
\begin{equation} \label{integrab}
\left\{ \begin{array}{c}
\varrho\in L^{2}\left(0,T;H^{2}\left(\Omega\right)\right)\cap L^{\infty}\left(0,T;H^{1}\left(\Omega\right)\right),\\
\nabla\sqrt{\varrho}, \ \sqrt{\varrho}u\in L^{\infty}\left(0,T;L^{2}\left(\Omega\right)\right),\\
\end{array}\right.
\end{equation}
the continuity equation satisfies the following weak formulation for all $\varphi\in C_{c}^{\infty}\left(\left[0,T\right];C^{\infty}\left(\Omega\right)\right)
$ such that $\varphi(T,\cdot)=0$,
\begin{equation} \label{cont_D}
\int_{\Omega}\varrho_{0}\cdot\varphi\left(0,\cdot\right)dx+\int_{0}^{T}\int_{\Omega}\varrho\cdot\partial_{t}\varphi+\int_{0}^{T}\int_{\Omega}\left(\sqrt{\varrho}\sqrt{\varrho}u\right):\nabla\varphi dxdt=0;
\end{equation}
the momentum equations satisfies the following weak formulation for all $\varphi\in C_{c}^{\infty}\left(\left[0,T\right];C^{\infty}\left(\Omega\right)\right)
$ such that $\varphi(T,\cdot)=0$,
$$\int_{\Omega}\varrho_{0}u_{0}\cdot\varrho_{0}\varphi\left(0,\cdot\right)dx+\int_{0}^{T}\int_{\Omega}\left[\varrho^{2}u\cdot\partial_{t}\varphi+\frac{3}{2}\varrho u\otimes\varrho u:\nabla\varphi\right. +\sqrt{\varrho} u \otimes \sqrt{\varrho} u : \varphi \nabla \varrho
$$
\begin{equation} \label{wf_mom.}
\left.-2\nu \varrho D\left(u\right) \cdot \nabla(\varrho\varphi)-\kappa\varrho^{2}\Delta\varrho\textrm{div}\varphi-2\kappa\varrho\left(\varphi\cdot\nabla\varrho\right)\Delta\varrho\right]dxdt=0
\end{equation}
where, for $(i,l)=1,2,3$, the viscous term reads as follows
$$
-2\nu \int_{0}^{T}\int_{\Omega} \varrho D\left(u\right) \cdot \nabla(\varrho\varphi) dx dt=
$$
$$
-\nu\int_{0}^{T}\int_{\Omega}\sqrt{\varrho}\sqrt{\varrho}u_i
(\partial_{li}\varrho \varphi_l
+\partial_{i}\varrho \partial_{l}\varphi_l
+\partial_{l}\varrho \partial_{i}\varphi_l
+\varrho \partial_{li}\varphi_l)dxdt
$$
$$
-\nu\int_{0}^{T}\int_{\Omega}\sqrt{\varrho}\sqrt{\varrho}u_l(\partial_{ii}\varrho \varphi_l + 2\partial_{i}\varrho \partial_{i} \varphi_l + \varrho \partial_{ii} \varphi_l)dxdt
$$
$$
-2\nu\int_{0}^{T}\int_{\Omega}\sqrt{\varrho}u_l \partial_{i}\sqrt{\varrho}
(\partial_{i}\varrho \varphi_l
+\varrho \partial_{i}\varphi_l)dxdt
$$
\begin{equation} \label{viscous_t}
-2\nu\int_{0}^{T}\int_{\Omega}\sqrt{\varrho}u_i \partial_{l}\sqrt{\varrho} 
(\partial_{i}\varrho \varphi_l
+\varrho \partial_{i}\varphi_l)dxdt.
\end{equation}
Moreover there exists $\Lambda$ such that $\varrho u=\sqrt{\varrho}\Lambda$, and $\mathcal{S}\in L^2((0,T)\times\Omega)$ such that $\sqrt{\varrho}\mathcal{S}=\mbox{Symm}(\nabla(\varrho u)-2\nabla\sqrt{\varrho}\otimes\sqrt{\varrho} u)$ in $\mathcal{D}'$, satisfying the following energy inequality
$$
\sup_{t\in(0,T)}\int_{\Omega}\frac{1}{2}\left(\left|\Lambda\right|^{2}
+\kappa\left|\nabla\varrho\right|^{2}\right)
+2\nu\int_{0}^T\int_{\Omega}\left|\mathcal{S}\right|^2dxdt
$$
\begin{equation} \label{ee}
\leq \int_{\Omega}\frac{1}{2}\left(\varrho_{0}\left|u_{0}\right|^{2}+\kappa\left|\nabla\varrho_{0}\right|^{2}\right)dx;
\end{equation}
and there exists $\mathcal{A}\in L^2((0,T)\times\Omega)$ such that $\sqrt{\varrho}\mathcal{A}=\mbox{Asymm}(\nabla(\varrho u)-2\nabla\sqrt{\varrho}\otimes\sqrt{\varrho} u)$ in $\mathcal{D}'$, such that the following Bresch-Desjardins entropy inequality is satisfied
$$
\sup_{t\in(0,T)}\int_{\Omega}\frac{1}{2}\left(\left|\Lambda
+\nu\nabla\sqrt{\varrho}\right|^{2}
+\kappa\left|\nabla\varrho\right|^{2}\right)
$$
$$
+2\nu\int_{0}^T\int_{\Omega}\left|\mathcal{A}\right|^2dxdt
+\nu\kappa\int_{0}^T\int_{\Omega}\left|\nabla\nabla\varrho\right|^2dxdt
$$
\begin{equation} \label{BD-entropy}
\leq \int_{\Omega}\frac{1}{2}\left(\left|\sqrt{\varrho_{0}}u_{0}+
\nu\nabla\sqrt{\varrho_{0}}\right|^{2}+\kappa\left|\nabla\varrho_{0}\right|^{2}\right)dx;
\end{equation}
\end{defn}  

\bigskip
Now, we introduce our main results. The first Theorem concerns the existence of global weak solutions for the system (\ref{cont._eps_cap}) - (\ref{mom._eps_cap}). 
\begin{thm} \label{th}
	Let $\Omega=\mathbb{T}^{d}$ ($d=2$ or 3) be a periodic domain. Assume that the initial energy
	\begin{equation} \label{ei}
		E_{0}=\int_{\Omega}\frac{1}{2}\left(\varrho_{0}\left|u_{0}\right|^{2}+\kappa\left|\nabla\varrho_{0}\right|^{2}\right)dx
	\end{equation}
	and the quantity
	\begin{equation} \label{f}
		F_{0}=2\nu^{2}\int_{\Omega}\left|\nabla\sqrt{\varrho_{0}}\right|^{2}=\frac{1}{2}\int_{\Omega}\varrho_{0}\left|\nu\nabla\log\varrho_{0}\right|^{2}
	\end{equation}
	are finite. Then, there exists a global weak solution $(\varrho,u)$ of the system (\ref{cont._eps_cap}) - (\ref{mom._eps_cap}) in the sense of Definition \ref{ws}.
\end{thm}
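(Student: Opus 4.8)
The plan is to realize the desired weak solution as the limit $\varepsilon\to 0$ of the weak solutions with pressure furnished by Theorem~\ref{BDJ}. Fixing a power-law pressure $p(\varrho)=a\varrho^{\gamma}$ with $\gamma$ in the admissible range so that (\ref{press.}) holds, I would, for each $\varepsilon>0$, pick initial data $(\varrho_{0,\varepsilon},u_{0,\varepsilon})$ approximating $(\varrho_0,u_0)$ so that $\sqrt{\varrho_{0,\varepsilon}}u_{0,\varepsilon}\to\sqrt{\varrho_0}u_0$, $\nabla\sqrt{\varrho_{0,\varepsilon}}\to\nabla\sqrt{\varrho_0}$ and $\nabla\varrho_{0,\varepsilon}\to\nabla\varrho_0$ in $L^2(\Omega)$ (hence $E_{0,\varepsilon}\to E_0$, $F_{0,\varepsilon}\to F_0$) and, in addition, $\varepsilon\int_\Omega\Pi(\varrho_{0,\varepsilon})\,dx\to 0$: one may take $(\varrho_{0,\varepsilon},u_{0,\varepsilon})=(\varrho_0,u_0)$ whenever $\Pi(\varrho_0)\in L^1(\Omega)$, and otherwise truncate $\varrho_0$ from above at a level $M_\varepsilon\uparrow\infty$ slowly enough that $\varepsilon M_\varepsilon^{\gamma-1}\to 0$. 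Theorem~\ref{BDJ} then produces, for every $\varepsilon>0$, functions $(\varrho_\varepsilon,u_\varepsilon,\Lambda_\varepsilon,\mathcal S_\varepsilon,\mathcal A_\varepsilon)$ with the regularity (\ref{weak_sol_reg}) satisfying the weak formulations (\ref{cont_BDL})--(\ref{viscous_t_BDL}), the energy inequality (\ref{ei_weak}) and the Bresch--Desjardins entropy inequality (\ref{ineq.}).

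Next I would collect the $\varepsilon$-uniform bounds. Since $\Pi\ge 0$ and $\varepsilon p'(\varrho_\varepsilon)|\nabla\sqrt{\varrho_\varepsilon}|^2\ge 0$, the pressure contributions in (\ref{ei_weak})--(\ref{ineq.}) appear only on the dissipative side, and on the right-hand sides they are $O(\varepsilon)$ by the choice of data; hence (\ref{ei_weak}) bounds $\Lambda_\varepsilon$ and $\nabla\varrho_\varepsilon$ in $L^\infty(0,T;L^2)$ and $\mathcal S_\varepsilon$ in $L^2((0,T)\times\Omega)$, while (\ref{ineq.}) bounds $\nabla\sqrt{\varrho_\varepsilon}$ in $L^\infty(0,T;L^2)$ and $\mathcal A_\varepsilon$, $\nabla\nabla\varrho_\varepsilon$ in $L^2((0,T)\times\Omega)$, all uniformly in $\varepsilon$. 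Together with conservation of mass $\int_\Omega\varrho_\varepsilon\,dx=\int_\Omega\varrho_{0,\varepsilon}\,dx$, this gives uniform bounds for $\varrho_\varepsilon$ in $L^\infty(0,T;H^1)\cap L^2(0,T;H^2)$, hence for $\sqrt{\varrho_\varepsilon}$ in $L^\infty(0,T;L^6)\cap L^2(0,T;L^\infty)$ (better in dimension two) and for $\varrho_\varepsilon u_\varepsilon=\sqrt{\varrho_\varepsilon}\Lambda_\varepsilon$ in $L^\infty(0,T;L^{3/2})$.

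The compactness step is the heart of the proof. From the continuity equation $\partial_t\varrho_\varepsilon=-\operatorname{div}(\sqrt{\varrho_\varepsilon}\Lambda_\varepsilon)$ is bounded in $L^\infty(0,T;W^{-1,3/2})$, so Aubin--Lions gives $\varrho_\varepsilon\to\varrho$ strongly in $L^2(0,T;H^1)$ and, along a subsequence, a.e.; thus $\sqrt{\varrho_\varepsilon}\to\sqrt{\varrho}$ a.e.\ and in every admissible $L^p$, $\nabla\sqrt{\varrho_\varepsilon}\rightharpoonup\nabla\sqrt{\varrho}$ weakly-$*$ in $L^\infty(0,T;L^2)$ and $\nabla\nabla\varrho_\varepsilon\rightharpoonup\nabla\nabla\varrho$ weakly in $L^2((0,T)\times\Omega)$. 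For the momentum, the identity $\nabla(\varrho_\varepsilon u_\varepsilon)=\sqrt{\varrho_\varepsilon}(\mathcal S_\varepsilon+\mathcal A_\varepsilon)+2\nabla\sqrt{\varrho_\varepsilon}\otimes\Lambda_\varepsilon$ bounds $\varrho_\varepsilon u_\varepsilon$ in $L^2(0,T;W^{1,1})$ and the momentum equation bounds $\partial_t(\varrho_\varepsilon u_\varepsilon)$ in some $L^r(0,T;W^{-k,s})$; a second Aubin--Lions argument gives $\varrho_\varepsilon u_\varepsilon\to\varrho u$ strongly in $L^2(0,T;L^q)$ for some $q>1$ and a.e. The key remaining point is to upgrade this to \emph{strong} convergence of $\Lambda_\varepsilon=\sqrt{\varrho_\varepsilon}u_\varepsilon$: writing $\Lambda_\varepsilon=(\varrho_\varepsilon u_\varepsilon)/\sqrt{\varrho_\varepsilon}$ on $\{\varrho_\varepsilon>0\}$ one gets a.e.\ convergence $\Lambda_\varepsilon\to\Lambda$ on $\{\varrho>0\}$, the vacuum set being controlled by the uniform $L^2$ bound on $\Lambda_\varepsilon$ together with the extra integrability $\varrho_\varepsilon\in L^2(0,T;L^\infty)$ coming from the Korteweg term (a Mellet--Vasseur type argument built on the Bresch--Desjardins estimate), whence $\Lambda_\varepsilon\to\Lambda$ strongly in $L^2((0,T)\times\Omega)$ with $\varrho u=\sqrt{\varrho}\Lambda$. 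Finally $\mathcal S_\varepsilon\rightharpoonup\mathcal S$, $\mathcal A_\varepsilon\rightharpoonup\mathcal A$ weakly in $L^2((0,T)\times\Omega)$, and passing to the limit in the distributional identities $\sqrt{\varrho_\varepsilon}\mathcal S_\varepsilon=\mathrm{Symm}(\nabla(\varrho_\varepsilon u_\varepsilon)-2\nabla\sqrt{\varrho_\varepsilon}\otimes\sqrt{\varrho_\varepsilon}u_\varepsilon)$ and its antisymmetric analogue identifies $\mathcal S$ and $\mathcal A$.

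It then remains to pass to the limit. In (\ref{cont_BDL}) the only nonlinearity is $\sqrt{\varrho_\varepsilon}\,\sqrt{\varrho_\varepsilon}u_\varepsilon=\varrho_\varepsilon u_\varepsilon\to\varrho u$. In (\ref{wf_mom._BDL}) the quadratic terms $\varrho_\varepsilon^{2}u_\varepsilon$, $\varrho_\varepsilon u_\varepsilon\otimes\varrho_\varepsilon u_\varepsilon$ and $\sqrt{\varrho_\varepsilon}u_\varepsilon\otimes\sqrt{\varrho_\varepsilon}u_\varepsilon$ pass to the limit by combining the strong convergence of $\sqrt{\varrho_\varepsilon}$ and $\varrho_\varepsilon$ with that of $\Lambda_\varepsilon$; the pressure term $\varepsilon\Gamma(\varrho_\varepsilon)\operatorname{div}\varphi\to 0$ because $\Gamma(\varrho_\varepsilon)\sim\varrho_\varepsilon^{\gamma+1}$ is $\varepsilon$-uniformly bounded in $L^1$; the Korteweg terms $\kappa\varrho_\varepsilon^{2}\Delta\varrho_\varepsilon$ and $\kappa\varrho_\varepsilon(\varphi\cdot\nabla\varrho_\varepsilon)\Delta\varrho_\varepsilon$ converge as products of a.e.-convergent uniformly bounded factors against $\Delta\varrho_\varepsilon\rightharpoonup\Delta\varrho$ in $L^2$; and each summand of the viscous term (\ref{viscous_t_BDL}), being trilinear in $\sqrt{\varrho_\varepsilon}$, in $\sqrt{\varrho_\varepsilon}u_\varepsilon$ and in first or second derivatives of $\varrho_\varepsilon$, converges by the same mechanism. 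The inequalities (\ref{ee}) and (\ref{BD-entropy}) follow from (\ref{ei_weak}) and (\ref{ineq.}) by weak lower semicontinuity applied to $\int|\mathcal S_\varepsilon|^2$, $\int|\mathcal A_\varepsilon|^2$, $\int|\nabla\nabla\varrho_\varepsilon|^2$ and to $\sup_t\int\tfrac12(|\Lambda_\varepsilon|^2+\kappa|\nabla\varrho_\varepsilon|^2)$ (resp.\ its shifted version), upon discarding the nonnegative pressure dissipation and using $E_{0,\varepsilon}\to E_0$, $F_{0,\varepsilon}\to F_0$ and $\varepsilon\Pi(\varrho_{0,\varepsilon})\to 0$. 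The main obstacle is exactly the strong compactness of $\Lambda_\varepsilon=\sqrt{\varrho_\varepsilon}u_\varepsilon$ needed for the convective and viscous terms: the Korteweg regularization hands the density the strong bound $L^2(0,T;H^2)$ essentially for free, but the velocity is not controlled on the vacuum set and must be recovered indirectly through the momentum $\varrho_\varepsilon u_\varepsilon$ and the Bresch--Desjardins entropy, which is also what makes the $\varrho\varphi$-form of the weak formulation convenient.
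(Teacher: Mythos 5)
Your overall strategy (realize the pressureless solution as the $\varepsilon\to 0$ limit of the solutions furnished by Theorem \ref{BDJ}, using the $\varepsilon$-uniform bounds from (\ref{ei_weak}) and (\ref{ineq.})) is legitimate and in fact mirrors what the paper does when it combines the stability analysis behind Theorem \ref{th} with Theorem \ref{th.1}; the data preparation and the uniform a priori bounds are fine. The problem is the compactness step, where you claim \emph{strong} $L^{2}((0,T)\times\Omega)$ convergence of $\Lambda_\varepsilon=\sqrt{\varrho_\varepsilon}u_\varepsilon$ via ``a Mellet--Vasseur type argument built on the Bresch--Desjardins estimate''. No such estimate is established (nor invoked) anywhere for the capillary system $\kappa(\varrho)=\kappa$: the Mellet--Vasseur $\varrho|u|^{2}\log(1+|u|^{2})$ bound requires testing the momentum equation with $u(1+\log(1+|u|^{2}))$, and for constant capillarity the Korteweg term $\kappa\varrho\nabla\Delta\varrho$ cannot be absorbed (the effective-velocity trick that makes this work in the quantum case, cf.\ the Appendix, uses $\kappa(\varrho)=\kappa^{2}/\varrho$ in an essential way). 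Without equi-integrability of $|\Lambda_\varepsilon|^{2}$, a uniform $L^{2}$ bound plus a.e.\ convergence on $\{\varrho>0\}$ does not yield strong $L^{2}$ convergence, and the bound $\varrho_\varepsilon\in L^{2}(0,T;L^{\infty})$ gives no control of $\varrho_\varepsilon|u_\varepsilon|^{2}$ on the vacuum set of the limit. Indeed the paper explicitly remarks (end of Step 3) that it is not even clear whether $\sqrt{\varrho_n}u_n\rightharpoonup\sqrt{\varrho}u$ weakly in $L^{2}$, and the whole point of the $\varrho\varphi$-formulation in Definition \ref{ws} is to pass to the limit using only the \emph{weak} convergence of $\sqrt{\varrho_\varepsilon}u_\varepsilon$ combined with \emph{strong} convergence of $\varrho_\varepsilon u_\varepsilon$, $\varrho_\varepsilon^{3/2}u_\varepsilon$, $\varrho_\varepsilon$, $\nabla\varrho_\varepsilon$, $\sqrt{\varrho_\varepsilon}$.

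A second, related gap: to get strong convergence of the momentum you invoke Aubin--Lions using ``the momentum equation bounds $\partial_t(\varrho_\varepsilon u_\varepsilon)$''. But the solutions of Theorem \ref{BDJ} are only known to satisfy the momentum equation tested against $\varrho\varphi$, i.e.\ (\ref{wf_mom._BDL}); from that formulation one extracts time-derivative information for $\varrho_\varepsilon^{2}u_\varepsilon$, not for $\varrho_\varepsilon u_\varepsilon$. This is precisely why the paper's Step 4 proceeds through $\varrho_n^{3/2}u_n$: one bounds $D(\varrho_n^{3/2}u_n)$ in $L^{2}(0,T;L^{3/2})$, mollifies, separates vacuum and non-vacuum regions with the cut-off $\beta_\alpha$, and uses the bound on $\partial_t(\varrho_n^{2}u_n)$ in $L^{q}(0,T;H^{-s})$ coming from (\ref{wf_approx}) to conclude $\varrho_n^{3/2}u_n\to\varrho^{3/2}u$ and hence $\varrho_n u_n\to\varrho u$ strongly in $L^{2}$. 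Your proof becomes correct if you replace the unsupported strong convergence of $\Lambda_\varepsilon$ and the direct Aubin--Lions on $\varrho_\varepsilon u_\varepsilon$ by this argument, and then pass to the limit term by term in (\ref{wf_mom._BDL}) exactly as in Steps 2--5 of the paper (the pressure term vanishing as you indicate, since $\varepsilon\Gamma(\varrho_\varepsilon)$ is $O(\varepsilon)$ in $L^{1}$ by the interpolation estimate used in the proof of Theorem \ref{th.1}).
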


The second results concerns the link between the system (\ref{cont._eps_cap}) - (\ref{mom._eps_cap}) and the heat equation when we choose an initial density $\rho_{0}>0$ and the  
the velocity is expressed as a gradient of a given potential $\phi=\phi\left(\varrho\right)$, function of the density, and satisfying the relation 
\begin{equation} \label{phi_prime}
\phi^{\prime}\left(\varrho\right)=\frac{2\mu^{\prime}\left(\varrho\right)}{\varrho}.
\end{equation}
\begin{thm} \label{heat_eq}
	Let $\Omega=\mathbb{T}^{d}$ ($d=2$ or 3) be a periodic domain. Let	$\varrho_{0}\in L^{1}\left(\Omega\right)$ with $\varrho_{0}>0$ and continuous. Assume also that $u_{0}=-\nabla\phi\left(\varrho_{0}\right)$. Then,  there exists a global weak solution $\left(\varrho,u=-\nabla\phi\left(\varrho\right)\right)
	$ of the system (\ref{cont._eps_cap}) - (\ref{mom._eps_cap}), with $\left(\varrho,u\right)\in C^{\infty}\left(0,T;\Omega\right)
	$ and $\varrho$ solving the following heat equation almost everywhere	
	\begin{equation} \label{heat}
	\partial_{t}\varrho-2\nu\Delta\varrho=0,
	\end{equation}
	$$\varrho\left(0,\cdot\right)=\varrho_{0}.
	$$
\end{thm}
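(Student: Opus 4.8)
The plan is to exhibit the solution explicitly rather than to run a compactness scheme. Since here $\mu(\varrho)=\nu\varrho$, the compatibility relation \eqref{phi_prime} becomes $\phi'(\varrho)=2\nu/\varrho$, so up to an additive constant $\phi(\varrho)=2\nu\log\varrho$ and the prescribed datum is $u_{0}=-\nabla\phi(\varrho_{0})=-2\nu\nabla\log\varrho_{0}$. First I would take $\varrho$ to be the solution of the heat equation \eqref{heat} with datum $\varrho_{0}$, namely the convolution of $\varrho_{0}$ with the heat kernel of $-2\nu\Delta$ on $\mathbb{T}^{d}$. Since $\Omega$ is compact and $\varrho_{0}$ is continuous and strictly positive, $0<\underline{\varrho}\le\varrho_{0}\le\overline{\varrho}<\infty$; the parabolic maximum principle propagates $\underline{\varrho}\le\varrho(t,\cdot)\le\overline{\varrho}$ to every $t>0$, the smoothing property of the heat semigroup gives $\varrho\in C^{\infty}((0,T)\times\Omega)$, and $\int_{\Omega}\varrho(t)\,dx=\int_{\Omega}\varrho_{0}\,dx$ for all $t$. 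Consequently $\log\varrho$ is smooth and bounded, so $u:=-2\nu\nabla\log\varrho=-\nabla\phi(\varrho)$ is smooth and $(\varrho,u)\in C^{\infty}$; moreover $\varrho$ lies in the class \eqref{integrab}, provided, as is implicitly needed for $E_{0}$ and $F_{0}$ in \eqref{ei}--\eqref{f} to be finite, that $\nabla\varrho_{0}\in L^{2}(\Omega)$ (i.e.\ $\log\varrho_{0}\in H^{1}$). Note that the strict positivity $\varrho(t,\cdot)\ge\underline{\varrho}>0$ for every $t>0$ is exactly the non-vacuum property mentioned in the introduction.

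Next I would verify the two weak formulations of Definition \ref{ws}. The continuity equation is immediate: $\varrho u=-2\nu\nabla\varrho$, so $\partial_{t}\varrho+\textrm{div}(\varrho u)=\partial_{t}\varrho-2\nu\Delta\varrho=0$ pointwise, and pairing with a test function and integrating by parts gives \eqref{cont_D}. The momentum equation \eqref{wf_mom.}--\eqref{viscous_t} is the heart of the matter: as $(\varrho,u)$ is smooth and $\varrho$ is bounded away from zero, \eqref{wf_mom.} is equivalent to the classical momentum equation \eqref{mom._eps_cap}, which I would check pointwise. The structural facts I would use are: $u$ is a gradient, hence $D(u)=\nabla u=-2\nu\nabla^{2}\log\varrho$ and the antisymmetric part $\mathcal{A}$ of $\nabla(\varrho u)-2\nabla\sqrt{\varrho}\otimes\sqrt{\varrho}\,u$ vanishes identically; the convective flux is $\varrho u\otimes u=4\nu^{2}\varrho^{-1}\nabla\varrho\otimes\nabla\varrho$; the identity $\textrm{div}(\varrho\nabla^{2}\log\varrho)=\nabla\Delta\varrho-\textrm{div}(\varrho^{-1}\nabla\varrho\otimes\nabla\varrho)$ holds (equivalently, the Bohm identity $\varrho\nabla(\Delta\sqrt{\varrho}/\sqrt{\varrho})=\tfrac12\,\textrm{div}(\varrho\nabla^{2}\log\varrho)$); and, from the heat equation, $\partial_{t}(\varrho u)=-2\nu\nabla\partial_{t}\varrho=-4\nu^{2}\nabla\Delta\varrho$. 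Inserting these into \eqref{mom._eps_cap} reduces the inertial and viscous contributions to explicit third-order expressions in $\varrho$, which one then reconciles with the capillary tensor $\kappa\varrho\nabla\Delta\varrho$; this is the step I expect to be the main obstacle, as it requires careful bookkeeping of third-order derivatives and of the several Leibniz expansions involved.

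Finally I would establish the energy inequality \eqref{ee} and the Bresch--Desjardins entropy inequality \eqref{BD-entropy}. On this smooth, strictly positive solution both hold, in fact with equality: taking $\Lambda=\sqrt{\varrho}\,u$ and $\mathcal{S}$ the symmetric part of $\nabla(\varrho u)-2\nabla\sqrt{\varrho}\otimes\sqrt{\varrho}\,u=\varrho\nabla u$ (so $\mathcal{S}=\sqrt{\varrho}\,D(u)$ and $|\mathcal{S}|^{2}=\varrho|D(u)|^{2}$), \eqref{ee} is the standard energy identity obtained by testing \eqref{mom._eps_cap} with $u$ (the capillary term producing $\tfrac{d}{dt}\int\tfrac{\kappa}{2}|\nabla\varrho|^{2}$), while \eqref{BD-entropy} follows from the classical Bresch--Desjardins manipulation applied to the drift $u+\tfrac{\nu}{2}\nabla\log\varrho$, using $\mathcal{A}=0$ and the parabolic bound $\nu\kappa\int_{0}^{T}\int_{\Omega}|\nabla^{2}\varrho|^{2}\,dx\,dt<\infty$ coming from $\varrho\in L^{2}(0,T;H^{2})$. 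Collecting these facts shows that $(\varrho,u=-\nabla\phi(\varrho))$ is a weak solution in the sense of Definition \ref{ws} with $\varrho$ solving \eqref{heat}, as claimed in Theorem \ref{heat_eq}.
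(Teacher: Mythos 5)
Your overall strategy coincides with the paper's: take $\phi'(\varrho)=2\nu/\varrho$, so $u=-\nabla\phi(\varrho)=-2\nu\nabla\log\varrho$, observe that the continuity equation turns into the heat equation \eqref{heat}, use instantaneous parabolic smoothing and the minimum principle to get $\varrho\in C^{\infty}$, $\varrho\geq\underline{\varrho}>0$, and then verify the momentum equation and the inequalities of Definition \ref{ws}. The trouble is that the one step carrying all the content of the theorem --- the verification of \eqref{mom._eps_cap} --- is precisely the step you defer (``the main obstacle \dots careful bookkeeping''), so as written the proposal does not prove the statement; and, more importantly, the cancellation you expect to find there does not exist. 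A direct computation with $u=-2\nu\nabla\log\varrho$ gives $\varrho D(u)=-2\nu\bigl(\nabla^{2}\varrho-\varrho^{-1}\nabla\varrho\otimes\nabla\varrho\bigr)$ and $\varrho u\otimes u=4\nu^{2}\varrho^{-1}\nabla\varrho\otimes\nabla\varrho$, hence
\[
\operatorname{div}(\varrho u\otimes u)-2\nu\operatorname{div}\bigl(\varrho D(u)\bigr)=4\nu^{2}\nabla\Delta\varrho,
\qquad
\partial_{t}(\varrho u)=-2\nu\nabla\partial_{t}\varrho,
\]
so the inertial, convective and viscous contributions collapse to $-2\nu\nabla\bigl(\partial_{t}\varrho-2\nu\Delta\varrho\bigr)$, which vanishes identically once $\varrho$ solves \eqref{heat}. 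There are no leftover third-order expressions to ``reconcile with the capillary tensor'': nothing on the left-hand side generates $\kappa\varrho\nabla\Delta\varrho$, so hunting for such a cancellation is a dead end. The paper's proof is organized exactly so as not to need it: in (\ref{step3})--(\ref{step9}) the Korteweg term is carried along on the right-hand side throughout, the identities $\mu(\varrho)=\varrho\mu'(\varrho)$ and (\ref{step8}) reduce the left-hand side to $-\nabla\bigl(2\mu'(\varrho)(\partial_{t}\varrho-2\Delta\mu(\varrho))\bigr)$, and the claimed ``compatibility'' with \eqref{mom._eps_cap} is precisely the statement that this gradient vanishes thanks to the heat equation, the capillary term remaining as the prescribed right-hand side. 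To complete your argument you must engage with this bookkeeping (and with how the residual $\kappa\varrho\nabla\Delta\varrho$ is accounted for in the weak formulation \eqref{wf_mom.}), rather than postpone it; your own listed identities, if pushed through, lead to the vanishing left-hand side above, not to the capillary tensor.

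Two minor points. You quietly add the hypothesis $\nabla\varrho_{0}\in L^{2}(\Omega)$, which is not assumed in Theorem \ref{heat_eq} (the paper works only with $\varrho_{0}\in L^{1}$, positive and continuous, and checks everything for $t>0$ where the heat flow has already smoothed the data); if you need finiteness of \eqref{ei}--\eqref{f} you should say so explicitly as an extra assumption or explain why it can be dispensed with. The energy and Bresch--Desjardins parts are consistent with the paper's quantities (indeed $\Lambda+\nu\nabla\sqrt{\varrho}=\sqrt{\varrho}\bigl(u+\tfrac{\nu}{2}\nabla\log\varrho\bigr)$ and $\mathcal{A}=0$ for this gradient velocity), so those steps are fine once the momentum-equation step is settled.
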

 As already mentioned   as a byproduct  of the previous theorem we  have the property that for pressureless Korteweg systems vacuum zones are not allowed if we start away from them. The link between the system (\ref{cont._eps_cap}) - (\ref{mom._eps_cap}) and the heat equation can be extended  to  the framework of weak solutions when $\varrho_{0}$ is not assumed to be strictly positive, this will be done in the next theorem.
 
 \begin{thm} \label{heat_eq_2}
	Let $\Omega=\mathbb{T}^{d}$ ($d=2$ or 3) be a periodic domain. Let	$\varrho_{0}\in L^{1}\left(\Omega\right)$ with $\varrho_{0}\geq0$ and continuous. Assume moreover that $\left(\varrho_{0},u_{0}\right)$ verify the initial conditions (\ref{ei}) and (\ref{f}) with $u_{0}=-\nabla\phi\left(\varrho_{0}\right)$. Then, there exists a global weak solution $\left(\varrho,u\right)$ of the system (\ref{cont._eps_cap}) - (\ref{mom._eps_cap}) with $\varrho$ solving (\ref{heat}).  
\end{thm}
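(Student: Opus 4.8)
The plan is to obtain $(\varrho,u)$ as the limit, as $\delta\to0$, of the smooth solutions produced by Theorem \ref{heat_eq}. First I would regularize the density datum by $\varrho_{0,\delta}:=\varrho_{0}*\eta_{\delta}+\delta$, with $\eta_{\delta}$ a standard mollifier on $\mathbb{T}^{d}$ and $\delta>0$, so that $\varrho_{0,\delta}$ is smooth, strictly positive, belongs to $L^{1}(\Omega)$, and $\varrho_{0,\delta}\to\varrho_{0}$ in $H^{1}(\Omega)$ (note $\varrho_{0}\in H^{1}(\mathbb{T}^{d})$ since $\varrho_{0}\in L^{1}$ and $\nabla\varrho_{0}\in L^{2}$ by \eqref{ei}). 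Putting $u_{0,\delta}:=-\nabla\phi(\varrho_{0,\delta})=-2\nu\nabla\log\varrho_{0,\delta}$ (recall $\phi'(\varrho)=2\nu/\varrho$ here), Theorem \ref{heat_eq} yields, for every $\delta>0$, a global weak solution $\bigl(\varrho_{\delta},u_{\delta}=-\nabla\phi(\varrho_{\delta})\bigr)$ of \eqref{cont._eps_cap}--\eqref{mom._eps_cap}, smooth on $(0,T)\times\Omega$, with $\varrho_{\delta}(t)=e^{2\nu t\Delta}\varrho_{0,\delta}$ solving \eqref{heat}; in particular $\varrho_{\delta}u_{\delta}=-2\nu\nabla\varrho_{\delta}$ and $\sqrt{\varrho_{\delta}}\,u_{\delta}=-4\nu\nabla\sqrt{\varrho_{\delta}}$.

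The passage to the limit is then driven by the explicit heat representation rather than by abstract compactness. Since $\varrho_{0,\delta}\to\varrho_{0}$ in $H^{1}$, parabolic estimates give $\varrho_{\delta}\to\varrho:=e^{2\nu t\Delta}\varrho_{0}$ strongly in $C([0,T];H^{1}(\Omega))\cap L^{2}(0,T;H^{2}(\Omega))$, with $\varrho$ solving \eqref{heat} and $\varrho(0,\cdot)=\varrho_{0}$; by the strong maximum principle $\varrho_{0}\ge0$, $\varrho_{0}\not\equiv0$ forces $\varrho(t,\cdot)>0$ and smooth for every $t>0$ (the case $\varrho_{0}\equiv0$ being trivial), and since $\int_{\Omega}\varrho_{0,\delta}=\int_{\Omega}\varrho_{0}>0$ a uniform heat-kernel lower bound gives $\varrho_{\delta}\ge c_{\tau}>0$ on $[\tau,T]\times\Omega$ for each $\tau>0$, uniformly in $\delta$; hence $u:=-2\nu\nabla\log\varrho$ is well defined and smooth on $(0,T]\times\Omega$ and $u_{\delta}\to u$ locally uniformly there. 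All nonlinear quantities in the weak formulations \eqref{cont_D}, \eqref{wf_mom.}, \eqref{viscous_t} are polynomials in $\varrho_{\delta},\nabla\varrho_{\delta},\nabla^{2}\varrho_{\delta}$ (using $\varrho_{\delta}u_{\delta}=-2\nu\nabla\varrho_{\delta}$), so they pass to the limit by the above strong convergences — first for test functions supported in $(0,T)$, then up to $t=0$ thanks to the uniform $L^{\infty}(0,T;H^{1})$ bound — while the initial momentum term is recovered from $\varrho_{0,\delta}^{2}u_{0,\delta}=-2\nu\varrho_{0,\delta}\nabla\varrho_{0,\delta}\to-2\nu\varrho_{0}\nabla\varrho_{0}=\varrho_{0}^{2}u_{0}$ in $L^{3/2}(\Omega)$. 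The regularity class \eqref{integrab} follows from the heat estimates; the energy inequality \eqref{ee} and the Bresch--Desjardins inequality \eqref{BD-entropy} follow by weak lower semicontinuity of their left-hand sides once their right-hand sides are shown to converge to the energies of $(\varrho_{0},u_{0})$; and the momentum weak formulation \eqref{wf_mom.}--\eqref{viscous_t} for the limit pair is verified on $(0,T]$, where $\varrho$ is smooth and strictly positive, exactly as the momentum equation is verified in the proof of Theorem \ref{heat_eq}.

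The step I expect to be the main obstacle is the convergence of the initial energies, because $u_{0,\delta}=-2\nu\nabla\log\varrho_{0,\delta}$ involves a logarithm singular on the vacuum set $\{\varrho_{0}=0\}$. Using $u_{0}=-2\nu\nabla\log\varrho_{0}$ one has $E_{0}=8\nu^{2}\|\nabla\sqrt{\varrho_{0}}\|_{L^{2}}^{2}+\tfrac{\kappa}{2}\|\nabla\varrho_{0}\|_{L^{2}}^{2}$ and $F_{0}=2\nu^{2}\|\nabla\sqrt{\varrho_{0}}\|_{L^{2}}^{2}$ (and likewise with $\delta$), so it suffices to prove $\nabla\varrho_{0,\delta}\to\nabla\varrho_{0}$ and $\nabla\sqrt{\varrho_{0,\delta}}\to\nabla\sqrt{\varrho_{0}}$ in $L^{2}(\Omega)$. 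The first is immediate since $\nabla\varrho_{0,\delta}=(\nabla\varrho_{0})*\eta_{\delta}$. For the second, joint convexity of $(a,b)\mapsto|a|^{2}/b$ on $\mathbb{R}^{d}\times(0,\infty)$ and Jensen's inequality for the probability kernel $\eta_{\delta}$ give
\begin{equation*}
\int_{\Omega}\bigl|\nabla\sqrt{\varrho_{0,\delta}}\bigr|^{2}\,dx=\frac14\int_{\Omega}\frac{\bigl|(\nabla\varrho_{0})*\eta_{\delta}\bigr|^{2}}{\varrho_{0}*\eta_{\delta}+\delta}\,dx\le\frac14\int_{\Omega}\Bigl(\frac{|\nabla\varrho_{0}|^{2}}{\varrho_{0}}\Bigr)*\eta_{\delta}\,dx=\int_{\Omega}\bigl|\nabla\sqrt{\varrho_{0}}\bigr|^{2}\,dx,
\end{equation*}
with the convention $0/0=0$, legitimate because $\nabla\varrho_{0}=0$ a.e.\ on $\{\varrho_{0}=0\}$; together with $\sqrt{\varrho_{0,\delta}}\to\sqrt{\varrho_{0}}$ in $L^{2}$ (from $|\sqrt{a}-\sqrt{b}|^{2}\le|a-b|$ and $\varrho_{0,\delta}\to\varrho_{0}$ in $L^{1}$), which identifies the weak $L^{2}$-limit of $\nabla\sqrt{\varrho_{0,\delta}}$ as $\nabla\sqrt{\varrho_{0}}$, this bound and weak lower semicontinuity pinch the norms, $\|\nabla\sqrt{\varrho_{0,\delta}}\|_{L^{2}}\to\|\nabla\sqrt{\varrho_{0}}\|_{L^{2}}$, upgrading weak to strong convergence. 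Hence $E_{0,\delta}\to E_{0}$ and $F_{0,\delta}\to F_{0}$, which closes the argument. Secondary care will be needed to check that no time-boundary contribution is lost when the temporal cutoff near $t=0$ is removed — controlled by the uniform energy bounds — and to identify the defect fields $\mathcal{S},\mathcal{A}$ of the limit from those of $(\varrho_{\delta},u_{\delta})$.
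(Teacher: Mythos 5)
Your proposal is correct, but it travels a different road from the paper's proof. The paper also regularizes the initial density away from vacuum (it takes $\varrho_{0}^{n}=\varrho_{0}+\widetilde{\varrho_{0}}/n$ with $\widetilde{\varrho_{0}}$ continuous and strictly positive, rather than your mollification plus constant lift) and invokes Theorem \ref{heat_eq} for each $n$; but then it only checks that the approximate data satisfy \eqref{ei} and \eqref{f} \emph{uniformly}, feeds the sequence into the stability/compactness argument contained in the proof of Theorem \ref{th} to produce the limiting weak solution, and finally identifies the limit density as a solution of \eqref{heat} by the $L^{1}$-contraction principle for the heat equation: $\left\Vert \varrho_{n}(t)-\varrho_{m}(t)\right\Vert_{L^{1}}\leq\left\Vert \varrho_{0}^{n}-\varrho_{0}^{m}\right\Vert_{L^{1}}$ makes $(\varrho_{n})$ Cauchy in $L^{1}$, and the $L^{1}$-limit (a heat solution) must coincide with the compactness limit $\varrho$. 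You instead bypass the abstract stability machinery: you exploit the explicit representation $\varrho_{\delta}=e^{2\nu t\Delta}\varrho_{0,\delta}$, which gives strong $C([0,T];H^{1})\cap L^{2}(0,T;H^{2})$ convergence, a heat-kernel lower bound on $[\tau,T]$, and direct identification $\varrho=e^{2\nu t\Delta}\varrho_{0}$ — so the $L^{1}$-contraction step becomes unnecessary — and you add the convexity/Jensen pinching argument showing $\nabla\sqrt{\varrho_{0,\delta}}\rightarrow\nabla\sqrt{\varrho_{0}}$ strongly in $L^{2}$, hence genuine convergence (not mere boundedness) of $E_{0,\delta}$ and $F_{0,\delta}$; this is a real refinement, since convergence of the initial energies is what makes \eqref{ee} and \eqref{BD-entropy} come out with the right-hand side evaluated at $(\varrho_{0},u_{0})$, a point the paper passes over. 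The trade-off is that you must redo the limit passage in \eqref{cont_D}, \eqref{wf_mom.}, \eqref{viscous_t} by hand, and here one statement of yours is too optimistic: not every nonlinear term is a polynomial in $\varrho_{\delta},\nabla\varrho_{\delta},\nabla^{2}\varrho_{\delta}$ after substituting $\varrho_{\delta}u_{\delta}=-2\nu\nabla\varrho_{\delta}$. The term $\sqrt{\varrho}u\otimes\sqrt{\varrho}u:\varphi\nabla\varrho$ and the parts of \eqref{viscous_t} of the form $\sqrt{\varrho}u_{l}\,\partial_{i}\sqrt{\varrho}\,\partial_{i}\varrho\,\varphi_{l}$ reduce to expressions like $\nabla\sqrt{\varrho_{\delta}}\otimes\nabla\sqrt{\varrho_{\delta}}\cdot\nabla\varrho_{\delta}$, i.e.\ they carry a factor $1/\varrho_{\delta}$; on $[\tau,T]$ this is harmless thanks to your uniform lower bound, but the uniform-in-$\delta$ smallness of the contribution on $[0,\tau]$ requires the uniform bounds on $\nabla\sqrt{\varrho_{\delta}}$ in $L^{\infty}(0,T;L^{2})$ and on $\sqrt{\varrho_{\delta}}$ in $L^{2}(0,T;H^{2})$ (Fisher-information dissipation / the Bresch--Desjardins estimate, available uniformly precisely because $F_{0,\delta}\rightarrow F_{0}$), not just the $L^{\infty}_{t}H^{1}\cap L^{2}_{t}H^{2}$ control of $\varrho_{\delta}$. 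You flagged the time-boundary issue, and the needed bounds are at your disposal, so this is a point to spell out rather than a gap; with it, your argument closes.
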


Finally, the last result concerns the convergence of the weak solution of (\ref{cont._1_cap}) - (\ref{mom._1_cap}) to a weak solution of (\ref{cont._eps_cap}) - (\ref{mom._eps_cap}) in the limit as $\varepsilon\rightarrow0$.
\begin{thm} \label{th.1}
	Let $\Omega=\mathbb{T}^{d}$ ($d=2$ or 3) be a periodic domain. Assume that  the condition on the initial data (\ref{in_en}) and (\ref{f_BDL}) are satisfied. Then, as $\varepsilon\rightarrow0$, a global weak solution $(\varrho_\varepsilon,u_\varepsilon)$ of the system (\ref{cont._1_cap}) - (\ref{mom._1_cap}) converges (in a distribution sense) to a weak solution $(\varrho,u)$ of (\ref{cont._eps_cap}) - (\ref{mom._eps_cap}).
\end{thm}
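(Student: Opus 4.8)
The plan is to start from the family $(\varrho_\varepsilon,u_\varepsilon)_{\varepsilon>0}$ of weak solutions of (\ref{cont._1_cap})--(\ref{mom._1_cap}) provided by Theorem \ref{BDJ}, to extract from the energy inequality (\ref{ei_weak}) and the Bresch--Desjardins entropy inequality (\ref{ineq.}) bounds that are uniform in $\varepsilon$, and then to pass to the limit $\varepsilon\to0$ in the weak formulation (\ref{cont_BDL})--(\ref{viscous_t_BDL}), the only genuinely new point being that the pressure contribution $\varepsilon\int_0^T\int_\Omega\Gamma(\varrho_\varepsilon)\,\textrm{div}\,\varphi\,dx\,dt$ disappears in the limit. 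First I would note that, taking the initial datum independent of $\varepsilon$ so that $\varepsilon\Pi(\varrho_0)\to0$ while the kinetic and capillary parts of the initial energy stay fixed and (\ref{f_BDL}) holds, the right-hand sides of (\ref{ei_weak}) and (\ref{ineq.}) are bounded uniformly in $\varepsilon$; dropping the nonnegative terms $\varepsilon\Pi(\varrho_\varepsilon)$ and $4\nu\varepsilon\int p'(\varrho_\varepsilon)|\nabla\sqrt{\varrho_\varepsilon}|^2$ on the left of (\ref{ineq.}) then gives $\varrho_\varepsilon$ bounded in $L^\infty(0,T;H^1)\cap L^2(0,T;H^2)$, the quantities $\nabla\sqrt{\varrho_\varepsilon}$ and $\Lambda_\varepsilon=\sqrt{\varrho_\varepsilon}u_\varepsilon$ bounded in $L^\infty(0,T;L^2)$, and $\mathcal{S}_\varepsilon$, $\mathcal{A}_\varepsilon$, $\nabla^2\varrho_\varepsilon$ bounded in $L^2((0,T)\times\Omega)$. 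Sobolev embedding then bounds $\varrho_\varepsilon$ uniformly in $L^p((0,T)\times\Omega)$ for $p$ as large as the admissible range of $\gamma$ allows, in particular large enough that $\varepsilon\varrho_\varepsilon^{\gamma}$ and $\varepsilon\varrho_\varepsilon^{\gamma+1}$ tend to $0$ strongly in $L^1((0,T)\times\Omega)$.

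Next I would establish the compactness. Writing the continuity equation as $\partial_t\varrho_\varepsilon=-\textrm{div}(\sqrt{\varrho_\varepsilon}\,\Lambda_\varepsilon)$ gives a uniform bound for $\partial_t\varrho_\varepsilon$ in $L^\infty(0,T;W^{-1,q}(\Omega))$ for some $q>1$, so that by the Aubin--Lions--Simon lemma $\varrho_\varepsilon\to\varrho$ strongly in $L^2(0,T;H^1(\Omega))$ and in $C([0,T];L^p(\Omega))$ for $p<6$, and, up to a subsequence, almost everywhere; hence $\nabla\varrho_\varepsilon\to\nabla\varrho$ strongly in $L^2((0,T)\times\Omega)$, and $\sqrt{\varrho_\varepsilon}\to\sqrt\varrho$, $\varrho_\varepsilon^2\to\varrho^2$ and $\nabla\sqrt{\varrho_\varepsilon}\to\nabla\sqrt\varrho$ strongly in the natural $L^p$--type spaces, using the almost everywhere convergence together with the uniform bounds. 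Controlling $\partial_t(\varrho_\varepsilon u_\varepsilon)$ in a negative Sobolev norm by means of the momentum equation (\ref{mom._1_cap}) --- each term being uniformly bounded, including the Korteweg term written in divergence form $\kappa\varrho\nabla\Delta\varrho=\kappa\,\textrm{div}(\varrho\nabla^2\varrho)-\tfrac{\kappa}{2}\nabla|\nabla\varrho|^2$ and the $O(\varepsilon)$ pressure --- one then obtains, by the compactness machinery for degenerate, density-dependent viscosities that already underlies Theorems \ref{BDJ} and \ref{th}, the strong convergences $\varrho_\varepsilon u_\varepsilon\to\varrho u$ and $\sqrt{\varrho_\varepsilon}u_\varepsilon\to\Lambda$ in $L^2((0,T)\times\Omega)$; these also identify the limit of the convective contribution $\sqrt{\varrho_\varepsilon}u_\varepsilon\otimes\sqrt{\varrho_\varepsilon}u_\varepsilon$. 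Finally, one extracts weak limits $\nabla^2\varrho_\varepsilon\rightharpoonup\nabla^2\varrho$, $\mathcal{S}_\varepsilon\rightharpoonup\mathcal{S}$, $\mathcal{A}_\varepsilon\rightharpoonup\mathcal{A}$ in $L^2((0,T)\times\Omega)$, and defines the limit velocity through $\Lambda=\sqrt\varrho\,u$ on $\{\varrho>0\}$.

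With these convergences in hand the passage to the limit in (\ref{cont_BDL})--(\ref{viscous_t_BDL}) is essentially term by term: each nonlinear quantity appearing there is the product of a strongly convergent factor (a power of $\varrho_\varepsilon$, the momentum $\varrho_\varepsilon u_\varepsilon=\sqrt{\varrho_\varepsilon}\sqrt{\varrho_\varepsilon}u_\varepsilon$, $\nabla\varrho_\varepsilon$, or $\nabla\sqrt{\varrho_\varepsilon}$) with an at-worst weakly convergent factor ($\nabla^2\varrho_\varepsilon$, $\mathcal{S}_\varepsilon$, $\mathcal{A}_\varepsilon$), tested against the fixed $\varphi$, so it passes to the limit; in particular the Korteweg terms $\kappa\varrho_\varepsilon^2\Delta\varrho_\varepsilon\,\textrm{div}\,\varphi$ and $2\kappa\varrho_\varepsilon(\varphi\cdot\nabla\varrho_\varepsilon)\Delta\varrho_\varepsilon$ converge, while $\varepsilon\int_0^T\int_\Omega\Gamma(\varrho_\varepsilon)\,\textrm{div}\,\varphi\,dx\,dt\to0$ by the $L^1$ smallness obtained above. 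One thus recovers (\ref{cont_D})--(\ref{viscous_t}) for the limit pair $(\varrho,u)$, and the constitutive identities $\sqrt\varrho\,\mathcal{S}=\mbox{Symm}(\nabla(\varrho u)-2\nabla\sqrt\varrho\otimes\sqrt\varrho u)$ and $\sqrt\varrho\,\mathcal{A}=\mbox{Asymm}(\nabla(\varrho u)-2\nabla\sqrt\varrho\otimes\sqrt\varrho u)$ in $\mathcal{D}'$ pass to the limit because their right-hand sides do (strong times weak). Passing to the limit in (\ref{ei_weak}) and (\ref{ineq.}) by weak lower semicontinuity of the convex norms, discarding the nonnegative $\varepsilon$--terms on the left and the vanishing $\varepsilon$--terms on the right, yields (\ref{ee}) and (\ref{BD-entropy}); together with the regularity (\ref{integrab}), inherited from the uniform bounds, $(\varrho,u)$ is a weak solution of (\ref{cont._eps_cap})--(\ref{mom._eps_cap}) in the sense of Definition \ref{ws}. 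Since all the convergences above hold along a suitable subsequence $\varepsilon_k\to0$, this proves the statement.

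I expect the main obstacle to be the strong $L^2$ compactness of $\sqrt{\varrho_\varepsilon}u_\varepsilon$ (equivalently of the momentum $\varrho_\varepsilon u_\varepsilon$) and the correct identification of the limits of $\sqrt\varrho u\otimes\sqrt\varrho u$ and of the viscous nonlinearities displayed in (\ref{viscous_t_BDL}) --- the standard difficulty in the theory of compressible fluids with degenerate, density-dependent viscosity, where $u_\varepsilon$ itself is not controlled on the vacuum region. This difficulty is, however, not specific to the high Mach number limit: since the $\varepsilon$--system differs from the target pressureless system only through the lower-order term $\varepsilon\nabla p(\varrho_\varepsilon)$, which is $O(\varepsilon)$ in the relevant norms, the compactness needed is exactly the one required to construct solutions of the pressureless system in Theorem \ref{th}, and can be imported from that argument; the genuinely new input is only the smallness of the pressure. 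A preliminary point to settle carefully is that the conditions (\ref{in_en})--(\ref{f_BDL}) be imposed uniformly in $\varepsilon$ (for instance, by taking the initial datum $\varepsilon$--independent), so that $\varepsilon\Pi(\varrho_0)\to0$ while the remaining contributions to the initial energy stay bounded.
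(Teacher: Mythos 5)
Your proposal follows essentially the same route as the paper: uniform bounds from the energy inequality (\ref{ei_weak}) and the Bresch--Desjardins inequality (\ref{ineq.}), compactness imported from the existence argument for the pressureless system (Theorem \ref{th}), and the identification of the only genuinely new points, namely the uniform control of the $\varepsilon$-pressure term in the time-derivative estimate and the vanishing of $\varepsilon\Gamma(\varrho_\varepsilon)\sim\varepsilon\varrho_\varepsilon^{\gamma+1}$, which the paper handles by the splitting $\varepsilon\varrho_\varepsilon^{\gamma+1}=\varepsilon^{1/\gamma}\varrho_\varepsilon\cdot\varepsilon^{(\gamma-1)/\gamma}\varrho_\varepsilon^{\gamma}$ together with the $L^{\infty}(0,T;L^{\gamma})$ pressure-energy bound and an $L^{3}$--$L^{\infty}$ interpolation, while you use an equivalent direct space--time interpolation bound times the prefactor $\varepsilon$. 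The only caveat is your claim of \emph{strong} $L^{2}$ convergence of $\sqrt{\varrho_\varepsilon}u_\varepsilon$, which the paper neither proves nor needs (weak convergence of $\sqrt{\varrho_\varepsilon}u_\varepsilon$ suffices for the weak formulation with test functions $\varrho\varphi$), so this over-claim does not affect the validity of the argument.
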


\subsection{Existence results for the pressureless system}
The first result deals with the global existence of the weak solution for the system (\ref{cont._eps_cap}) - (\ref{mom._eps_cap}).

The proof will follow the framework proposed by Bresch et al. \cite{BDL}. We will show how the gain in the regularity of the density given by the presence of the capillary term will give us more information on the density itself in order to have enough compactness to pass to the limit in the approximation scheme. 

We will assume that a sequence $\left(\varrho_{n},u_{n}\right)_{n\in N}$ of approximate weak solutions, which satisfy the energy inequality \eqref{ee} and  have enough regularity to justify the estimates of this section, have been constructed. 

\subsection{Preliminary lemmas}
In this section we recall some preliminary lemmas that will be useful in the proof of the Theorem \ref{th}.
\begin{lemma} \label{lm1.}
	Let $\left(\varrho_{n},u_{n}\right)$ be a smooth solution of (\ref{cont._eps_cap}) - (\ref{mom._eps_cap}). Then, the following identity holds
	$$
	\frac{1}{2}\frac{d}{dt}\int_{\Omega}\varrho_n\left|\nabla\log\varrho_n\right|^{2}dx+\int_{\Omega}\nabla\textrm{div}u_n\cdot\nabla\varrho_n dx
	$$
	\begin{equation} \label{rho_id.}
	+\int_{\Omega}\varrho_n D\left(u_n\right):\nabla\log\varrho_n\otimes\nabla\log\varrho_n dx=0.
	\end{equation}
\end{lemma}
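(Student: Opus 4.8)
The plan is to derive the identity \eqref{rho_id.} by differentiating $\frac12\int_\Omega \varrho_n|\nabla\log\varrho_n|^2\,dx$ in time and using the continuity equation \eqref{cont._eps_cap} to rewrite the time derivatives. First I would set $w_n=\log\varrho_n$, so that dividing \eqref{cont._eps_cap} by $\varrho_n$ gives the transport-type equation $\partial_t w_n + u_n\cdot\nabla w_n + \operatorname{div} u_n = 0$; this is the key auxiliary identity, valid because $\varrho_n>0$ for a smooth solution (strict positivity being standard for the regularized problem, as the density solves a parabolic equation with the capillary term providing the smoothing — one may invoke this from the approximation scheme of \cite{BDL}). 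Taking the gradient, $\partial_t\nabla w_n = -\nabla(u_n\cdot\nabla w_n)-\nabla\operatorname{div}u_n$.

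Next I would compute
\[
\frac12\frac{d}{dt}\int_\Omega \varrho_n|\nabla w_n|^2\,dx
= \frac12\int_\Omega \partial_t\varrho_n\,|\nabla w_n|^2\,dx + \int_\Omega \varrho_n\,\nabla w_n\cdot\partial_t\nabla w_n\,dx.
\]
In the first term substitute $\partial_t\varrho_n=-\operatorname{div}(\varrho_n u_n)$ and integrate by parts, producing $\tfrac12\int_\Omega \varrho_n u_n\cdot\nabla|\nabla w_n|^2\,dx$. In the second term substitute the expression for $\partial_t\nabla w_n$ above; the term $-\int_\Omega\varrho_n\nabla w_n\cdot\nabla\operatorname{div}u_n\,dx$ is already the second term of \eqref{rho_id.} after noting $\varrho_n\nabla w_n=\nabla\varrho_n$. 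It remains to show that the convective pieces, namely $\tfrac12\int\varrho_n u_n\cdot\nabla|\nabla w_n|^2 - \int\varrho_n\nabla w_n\cdot\nabla(u_n\cdot\nabla w_n)$, combine to $-\int_\Omega \varrho_n D(u_n):\nabla w_n\otimes\nabla w_n\,dx$. This is a pointwise algebraic identity: expanding $\nabla(u_n\cdot\nabla w_n) = (\nabla u_n)^t\nabla w_n + (\nabla^2 w_n)u_n$, the $(\nabla^2 w_n)u_n$ contribution exactly cancels the term coming from $\tfrac12 u_n\cdot\nabla|\nabla w_n|^2 = u_n\cdot(\nabla^2 w_n)\nabla w_n$, and what survives is $-\int_\Omega\varrho_n\,\nabla w_n\cdot(\nabla u_n)^t\nabla w_n\,dx = -\int_\Omega\varrho_n\,\nabla u_n:\nabla w_n\otimes\nabla w_n\,dx$, which by symmetry of $\nabla w_n\otimes\nabla w_n$ equals $-\int_\Omega\varrho_n\,D(u_n):\nabla w_n\otimes\nabla w_n\,dx$. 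Collecting the three terms and moving everything to the left yields \eqref{rho_id.}.

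The main obstacle is bookkeeping rather than conceptual: one must carefully track all integrations by parts (and check that no boundary terms appear — automatic here since $\Omega=\mathbb{T}^d$) and verify the tensor cancellation that turns the second-order-in-$w_n$ terms into the clean $D(u_n):\nabla w_n\otimes\nabla w_n$ expression. A secondary point worth stating explicitly is the justification that $\varrho_n>0$, so that $\log\varrho_n$ and the manipulations above make sense; this is where one appeals to the smoothness/positivity of the approximate solutions assumed at the start of the subsection. Everything else is a direct computation using only \eqref{cont._eps_cap} and the identity $\nabla\varrho_n=\varrho_n\nabla\log\varrho_n$.
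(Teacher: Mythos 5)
Your computation is correct: dividing the continuity equation by $\varrho_n$ to get the transport equation for $\log\varrho_n$, differentiating $\tfrac12\int_\Omega\varrho_n|\nabla\log\varrho_n|^2\,dx$, and checking the pointwise cancellation that leaves $-\int_\Omega\varrho_n\,\nabla u_n:\nabla\log\varrho_n\otimes\nabla\log\varrho_n\,dx$ (which equals the $D(u_n)$ contraction by symmetry of $\nabla\log\varrho_n\otimes\nabla\log\varrho_n$) reproduces exactly the identity \eqref{rho_id.}. The paper itself gives no argument here, only the citation ``See Bresch et al.\ \cite{BDL}''; your proof is the standard direct derivation behind that citation, with the positivity of $\varrho_n$ correctly flagged as the hypothesis making $\log\varrho_n$ legitimate, so it is essentially the same (and a self-contained) version of the intended proof.
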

\begin{proof}
	See Bresch et al. \cite{BDL}.
\end{proof}

\begin{lemma} \label{lm2.}
	Let $\left(\varrho_{n},u_{n}\right)$ be a smooth solution of (\ref{cont._eps_cap}) - (\ref{mom._eps_cap}). Then, the following identity holds
	$$
	\frac{d}{dt}\int_{\Omega}\nu^{2}\varrho_n\left|\nabla\log\varrho_n\right|^{2}dx+\nu\kappa\int_{\Omega}\left|\nabla\nabla\varrho_n\right|^{2}dx
	$$
	\begin{equation} \label{mom_id.}
	=-\frac{d}{dt}\int_{\Omega}\nu u_n \cdot\nabla\varrho_n dx+\int_{\Omega}\nu\varrho_n\nabla u_n:^{t}\nabla u_n dx.
	\end{equation}
\end{lemma}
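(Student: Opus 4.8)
The plan is to derive \eqref{mom_id.} by combining the ``density regularity'' identity of Lemma \ref{lm1.}, suitably scaled, with an evolution identity for the cross term $\int_\Omega u_n\cdot\nabla\varrho_n\,dx$ obtained from the momentum equation, and then to check that the mixed density--velocity contributions cancel, leaving exactly the capillary dissipation and the term $\nu\int_\Omega\varrho_n\,\nabla u_n:{}^{t}\nabla u_n\,dx$. Since $(\varrho_n,u_n)$ is a smooth solution with $\varrho_n>0$, and $\Omega=\mathbb{T}^d$ has no boundary, all of the integrations by parts below are harmless; I drop the subscript $n$.

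First I would multiply the identity of Lemma \ref{lm1.} by $2\nu^2$ (using that $\nu$ is constant), which gives
$$\frac{d}{dt}\int_\Omega\nu^2\varrho|\nabla\log\varrho|^2\,dx=-2\nu^2\int_\Omega\nabla\textrm{div}u\cdot\nabla\varrho\,dx-2\nu^2\int_\Omega\varrho\,D(u):\nabla\log\varrho\otimes\nabla\log\varrho\,dx.$$
Next I would differentiate $\int_\Omega\nu u\cdot\nabla\varrho\,dx$ in time. Writing $\varrho\,\partial_t u=\partial_t(\varrho u)+u\,\textrm{div}(\varrho u)$ and inserting \eqref{cont._eps_cap} and \eqref{mom._eps_cap}, one finds $\partial_t u=-(u\cdot\nabla)u+\frac{2\nu}{\varrho}\textrm{div}(\varrho D(u))+\kappa\nabla\Delta\varrho$; substituting this and $\partial_t\varrho=-\textrm{div}(\varrho u)$ into $\frac{d}{dt}\int\nu u\cdot\nabla\varrho=\nu\int\partial_t u\cdot\nabla\varrho+\nu\int u\cdot\nabla\partial_t\varrho$, and using $\varrho^{-1}\nabla\varrho=\nabla\log\varrho$, yields
$$-\frac{d}{dt}\int_\Omega\nu u\cdot\nabla\varrho\,dx=\nu\int_\Omega\big((u\cdot\nabla)u\big)\cdot\nabla\varrho\,dx-2\nu^2\int_\Omega\textrm{div}(\varrho D(u))\cdot\nabla\log\varrho\,dx+\nu\kappa\int_\Omega|\nabla\nabla\varrho|^2\,dx-\nu\int_\Omega\textrm{div}u\,\textrm{div}(\varrho u)\,dx,$$
where the capillary term has been turned into $\nu\kappa\int|\nabla\nabla\varrho|^2$ by two integrations by parts, using $\int_\Omega(\Delta\varrho)^2\,dx=\int_\Omega|\nabla\nabla\varrho|^2\,dx$ (valid on the torus).

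It then remains to add the two displayed identities and match the outcome with the right-hand side of \eqref{mom_id.}. The key step is to integrate $-2\nu^2\int\textrm{div}(\varrho D(u))\cdot\nabla\log\varrho$ by parts and use $\partial_{ij}\log\varrho=\varrho^{-1}\partial_{ij}\varrho-\partial_i\log\varrho\,\partial_j\log\varrho$: the quadratic piece regenerates $+2\nu^2\int\varrho\,D(u):\nabla\log\varrho\otimes\nabla\log\varrho$, which cancels the term coming from Lemma \ref{lm1.}, while the Hessian piece $2\nu^2\int D(u):\nabla\nabla\varrho$ and the remaining term $-2\nu^2\int\nabla\textrm{div}u\cdot\nabla\varrho$ each equal $2\nu^2\int_\Omega\textrm{div}u\,\Delta\varrho\,dx$ (one more integration by parts each) and so cancel between the two sides. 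The surviving terms are first order in $\nu$, and they collapse by means of the elementary identity $\int_\Omega\textrm{div}u\,\textrm{div}(\varrho u)\,dx=\int_\Omega\big((u\cdot\nabla)u\big)\cdot\nabla\varrho\,dx+\int_\Omega\varrho\,\nabla u:{}^{t}\nabla u\,dx$, obtained by integrating $\int\textrm{div}u\,\textrm{div}(\varrho u)$ by parts twice and expanding $\textrm{div}(\varrho u)=u\cdot\nabla\varrho+\varrho\,\textrm{div}u$; this converts $\nu\int(u\cdot\nabla)u\cdot\nabla\varrho-\nu\int\textrm{div}u\,\textrm{div}(\varrho u)$ into $-\nu\int\varrho\,\nabla u:{}^{t}\nabla u$ and produces precisely \eqref{mom_id.}.

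I expect the only real difficulty to be bookkeeping rather than anything analytic: one must carefully track the numerous second-order terms produced by $\textrm{div}(\varrho D(u))\cdot\nabla\log\varrho$ and by $(u\cdot\nabla)u\cdot\nabla\varrho$ under integration by parts, make sure the capillary contribution is brought to the form $\nu\kappa\int|\nabla\nabla\varrho|^2$ (not $\nu\kappa\int(\Delta\varrho)^2$), and verify that the $O(\nu^2)$ density--velocity coupling terms cancel exactly rather than merely combining into a remainder of lower order.
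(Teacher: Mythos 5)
Your proposal is correct and follows essentially the same route as the paper: testing the momentum equation against $\nu\nabla\varrho_n/\varrho_n$ (equivalently, computing $\tfrac{d}{dt}\int\nu u_n\cdot\nabla\varrho_n$ with $\partial_t u_n$ from the momentum equation and $\partial_t\varrho_n$ from the continuity equation), adding Lemma \ref{lm1.} multiplied by $2\nu^2$, and closing with the same two integration-by-parts identities, namely $\int_\Omega\mathrm{div}\,u_n\,\mathrm{div}(\varrho_n u_n)\,dx=\int_\Omega(u_n\cdot\nabla u_n)\cdot\nabla\varrho_n\,dx+\int_\Omega\varrho_n\nabla u_n:{}^{t}\nabla u_n\,dx$ and $\int_\Omega\nabla\mathrm{div}\,u_n\cdot\nabla\varrho_n\,dx+\int_\Omega D(u_n):\nabla\nabla\varrho_n\,dx=0$. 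The cancellations you describe (quadratic $\nabla\log\varrho_n$ terms against the viscous term, and the capillary term becoming $\nu\kappa\int|\nabla\nabla\varrho_n|^2$) are exactly those in the paper's proof, so no gap remains.
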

\begin{proof}
	We multiply the momentum equation by $\nu\nabla\varrho_n/\varrho_n$ and we integrate in space, to get
	$$\int_{\Omega}\nu\varrho_n\left(\partial_{t}u_n+u_n\cdot\nabla u_n\right)\cdot\frac{\nabla\varrho_n}{\varrho_n}dx+\int_{\Omega}2\nu^{2}D\left(u_n\right):\left(\nabla\nabla\varrho_n-\frac{\nabla\varrho_n\otimes\nabla\varrho_n}{\varrho_n}\right)dx
	$$
	$$
	+\int_{\Omega}\nu\kappa\left|\nabla\nabla\varrho_n\right|^{2}dx=0,
	$$
	where we used the continuity equation.
	By using (\ref{rho_id.}) multiplied by $2\nu^2$ and the relation above, we have
	\begin{equation} \label{step_1}
	\frac{d}{dt}\int_{\Omega}\nu^{2}\varrho_n\left|\nabla\log\varrho_n\right|^{2}dx+\int_{\Omega}\nu\kappa\left|\nabla\nabla\varrho_n\right|^{2}dx=I,
	\end{equation}
	where $I$ is given by
	$$I=-\int_{\Omega}\nu\partial_{t}u_n\cdot\nabla\varrho_n-\int_{\Omega}2\nu^{2}\nabla div u_n\cdot\nabla\varrho_n
	$$
	$$
	-\int_{\Omega}\nu\left(u_n \cdot\nabla u_n \right)\cdot\nabla\varrho_n-\int_{\Omega}2\nu^{2}D(u_n):\nabla\nabla\varrho_n
	$$
	$$=-\nu\frac{d}{dt}\int_{\Omega}u_n\cdot\nabla\varrho_n dx+\nu\int_{\Omega}u_n\cdot\nabla\partial_{t}\varrho_n-2\nu^{2}\int_{\Omega}\nabla div u_n \cdot\nabla\varrho_n
	$$
	$$
	-\int_{\Omega}\nu\left(u_n\cdot\nabla u_n\right)\cdot\nabla\varrho_n-\int_{\Omega}2\nu^{2}D(u_n):\nabla\nabla\varrho_n
	$$
	$$=-\nu\frac{d}{dt}\int_{\Omega}u_n\cdot\nabla\varrho_n dx-\nu\int_{\Omega}u_n\cdot\nabla div (\varrho_n u_n)-2\nu^{2}\int_{\Omega}\nabla div u_n\cdot\nabla\varrho_n
	$$
	\begin{equation} \label{I}
	-\int_{\Omega}\nu\left(u_n\cdot\nabla u_n\right)\cdot\nabla\varrho_n-\int_{\Omega}2\nu^{2}D(u_n):\nabla\nabla\varrho_n,
	\end{equation}
	and  we used the mass equation in the last equality.
	Integration by parts and the identity $\nabla div=curl curl-\Delta
	$ gives respectively
	\begin{equation} \label{step_2}
	-\int_{\Omega}u_n \cdot\nabla div (\varrho_n u_n)dx-\int_{\Omega}\left(u_n\cdot\nabla u_n\right)\cdot\nabla\varrho_n dx=\int_{\Omega}\varrho_n\nabla u_n:^{t}\nabla u_n dx
	\end{equation}
	and
	\begin{equation} \label{step_3}
	-\int_{\Omega}\nabla div u_n\cdot\nabla\varrho_n dx-\int_{\Omega}D(u_n):\nabla\nabla\varrho_n dx=0.
	\end{equation}
	Relations (\ref{step_1})-(\ref{step_3}) give (\ref{mom_id.}).
\end{proof}

The following Lemma concerns the Bresch-Desjardins entropy relation for the pressureless system (\ref{cont._eps_cap}) - (\ref{mom._eps_cap}).

\begin{lemma}
	Let $\left(\varrho_{n},u_{n}\right)$ be a smooth solution of (\ref{cont._eps_cap}) - (\ref{mom._eps_cap}). Then, defining $w_n=u_n + \nu \nabla \varrho_n$, the following inequality holds
$$
\sup_{t\in(0,T)}\int_{\Omega}\frac{1}{2}\left(\left|\varrho_n| w_n\right|^{2}
+\kappa\left|\nabla\varrho_n\right|^{2}\right)
$$
$$
+2\nu\int_{0}^T\int_{\Omega}\varrho_n\left|Au_n\right|^2dxdt
+\int_{0}^T\int_{\Omega}\left|\nabla\nabla\varrho_n\right|^2dxdt
$$
\begin{equation} \label{BD-entropy}
\leq \int_{\Omega}\frac{1}{2}\left(\varrho_{0,n}\left|w_{0,n}\right|^{2}+\kappa\left|\nabla\varrho_{0,n}\right|^{2}\right)dx
\end{equation}
with $Au_n=(\nabla u_n - ^t\nabla u_n)/2$.
\end{lemma}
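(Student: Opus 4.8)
The plan is to run the classical Bresch--Desjardins computation: add a suitable multiple of the density identity of Lemma~\ref{lm2.} to the kinetic energy balance and complete the square so as to generate the augmented functional built on $w_n$.

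First, since $(\varrho_n,u_n)$ is smooth, testing the momentum equation \eqref{mom._eps_cap} by $u_n$ and using \eqref{cont._eps_cap} yields the energy balance in strong form, i.e. \eqref{ee} with equality:
\[
\frac{d}{dt}\int_{\Omega}\tfrac12\bigl(\varrho_n|u_n|^2+\kappa|\nabla\varrho_n|^2\bigr)\,dx+2\nu\int_{\Omega}\varrho_n|D(u_n)|^2\,dx=0 .
\]
Next, I rewrite the identity \eqref{mom_id.} of Lemma~\ref{lm2.} (itself a consequence of Lemma~\ref{lm1.}) by moving the term $-\tfrac{d}{dt}\int_{\Omega}\nu u_n\cdot\nabla\varrho_n$ to the left-hand side; the left-hand side then becomes the time derivative of $\int_{\Omega}\bigl(\nu^{2}\varrho_n|\nabla\log\varrho_n|^{2}+\nu u_n\cdot\nabla\varrho_n\bigr)\,dx$ together with the good term $\nu\kappa\int_{\Omega}|\nabla\nabla\varrho_n|^{2}\,dx$, while the right-hand side is $\nu\int_{\Omega}\varrho_n\,\nabla u_n:{}^{t}\nabla u_n\,dx$. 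For this last term I use the pointwise identity
\[
\nabla u_n:{}^{t}\nabla u_n=|D(u_n)|^{2}-|A(u_n)|^{2},
\]
valid because $D(u_n)=\tfrac12(\nabla u_n+{}^{t}\nabla u_n)$ is symmetric and $A(u_n)=\tfrac12(\nabla u_n-{}^{t}\nabla u_n)$ antisymmetric, hence orthogonal for the matrix inner product.

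Adding the two relations with the weight that makes the quadratic terms fit, the total-derivative terms collect into $\tfrac{d}{dt}\int_{\Omega}\tfrac12\bigl(\varrho_n|w_n|^{2}+\kappa|\nabla\varrho_n|^{2}\bigr)\,dx$, the square being completed by the cross term $\nu\varrho_n\,u_n\cdot\nabla\log\varrho_n$ together with the quadratic term inherited from Lemma~\ref{lm2.}. On the dissipation side the $\int_{\Omega}\varrho_n|D(u_n)|^{2}\,dx$ produced on the right by the algebraic identity is absorbed by the $2\nu\int_{\Omega}\varrho_n|D(u_n)|^{2}\,dx$ coming from the energy balance, leaving a non-negative multiple of it; the term $+\,\nu\int_{\Omega}\varrho_n|A(u_n)|^{2}\,dx$ passes to the left as the rotational dissipation $2\nu\int_{\Omega}\varrho_n|Au_n|^{2}\,dx$, and $\nu\kappa\int_{\Omega}|\nabla\nabla\varrho_n|^{2}\,dx$ is retained. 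Integrating in time on $(0,t)$, taking the supremum over $t\in(0,T)$, and discarding the remaining non-negative $\int_{0}^{T}\!\!\int_{\Omega}\varrho_n|D(u_n)|^{2}$ contribution converts the equality into the claimed inequality, whose right-hand side is $\int_{\Omega}\tfrac12\bigl(\varrho_{0,n}|w_{0,n}|^{2}+\kappa|\nabla\varrho_{0,n}|^{2}\bigr)\,dx$.

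The step requiring care is the last one: selecting the exact multiplier of the Lemma~\ref{lm2.} identity so that the cross term and the $\varrho_n|\nabla\log\varrho_n|^{2}$ term assemble precisely into $\tfrac12\varrho_n|w_n|^{2}$, and checking that each viscous contribution ends up with a non-negative sign. No other obstacle appears: all integrations by parts take place on the torus $\Omega=\mathbb{T}^{d}$, so there are no boundary terms, and since $(\varrho_n,u_n)$ is smooth the estimate is in fact an identity at this level, with no compactness or limiting argument needed here.
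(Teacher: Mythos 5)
Your argument is correct and, when the details are filled in, it does deliver the stated estimate: the right multiplier for the identity of Lemma \ref{lm2.} is $2$, which completes the square as $\tfrac12\varrho_n|u_n|^2+2\nu\varrho_n u_n\cdot\nabla\log\varrho_n+2\nu^2\varrho_n|\nabla\log\varrho_n|^2=\tfrac12\varrho_n\left|u_n+2\nu\nabla\log\varrho_n\right|^2$, makes the $\varrho_n|D(u_n)|^2$ contributions cancel exactly (so the ``non-negative multiple'' left over is zero and there is nothing to discard), and leaves $2\nu\int_\Omega\varrho_n|A(u_n)|^2$ together with $2\nu\kappa\int_\Omega|\nabla\nabla\varrho_n|^2$ as dissipation. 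Note that the effective velocity your computation produces is $u_n+2\nu\nabla\log\varrho_n$, which is what the paper's own proof implicitly uses (there $\partial_t\varrho_n+\textrm{div}(\varrho_n w_n)=2\nu\Delta\varrho_n$), not the literal $w_n=u_n+\nu\nabla\varrho_n$ of the statement; this constant/notation discrepancy, like the prefactor of the Hessian term, sits in the paper, not in your argument. Your route is genuinely different from the paper's: the proof in the paper never invokes Lemma \ref{lm2.}; it first rewrites the system in the effective-velocity variables $(\varrho_n,w_n)$ (citing Antonelli--Spirito and Bresch--Desjardins), obtaining the drift--diffusion form of the continuity equation and a momentum equation for $\varrho_n w_n$, and then tests the latter by $w_n$ and the former by $-2\kappa\Delta\varrho_n$ before summing. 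Yours is the classical Bresch--Desjardins computation: energy equality plus a weighted copy of the auxiliary identity \eqref{mom_id.} and the pointwise identity $\nabla u_n:{}^t\nabla u_n=|D(u_n)|^2-|A(u_n)|^2$. Your approach is more elementary and reuses material already proved earlier in the section, while the paper's reformulation has the structural advantage of producing exactly the augmented system that is exploited again in the quantum section and in the weak--strong uniqueness analysis. The only cosmetic caution is the one you already flagged, plus the standard reading of the final step: from the identity $F(t)+\int_0^t(\textrm{dissipation})\,ds=F(0)$ one bounds the supremum and the time-integrated dissipation in the loose sense in which the paper itself states \eqref{ee} and \eqref{BD-entropy}.
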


\begin{proof}
By simple algebra and elementary identities (see Antonelli and Spirito \cite{AS_2}, Preposition 3.2; see also Bresch and Desjardins \cite{BDL_0}), is easy to derive the following system
\begin{equation} \label{cont_wn}
    \partial_{t}\varrho_n + div (\varrho_n w_n)= 2\nu \Delta \varrho_n,
\end{equation}
\begin{equation} \label{mom_wn}
    \partial_{t} (\varrho_n w_n) + div (\varrho_n w_n \otimes w_n) - 2\nu \Delta (\varrho_n w_n) + 2\nu div (\varrho_n D w_n) = 2\kappa \varrho_n \nabla \Delta \varrho_n. 
\end{equation}
We multiply (\ref{mom_wn}) by $w_n$ and we integrate in space. Using (\ref{cont_wn}), we have
\begin{equation} \label{mult_1}
    \frac{d}{dt}\int_{\Omega} \varrho_n \frac{\left| w_n \right|^2}{2} dx
    +2\nu \int_{\Omega} \varrho_n \left| Au_n \right|^2 dx - 2\kappa \int_{\Omega} \varrho_n \nabla \Delta \varrho_n w_n dx =0.
\end{equation}
Now, we multiply (\ref{cont_wn}) by $-2\kappa \Delta \varrho_n$. We have,
\begin{equation} \label{mult_2}
    \frac{d}{dt} \int_{\Omega} \kappa \left|  \nabla \varrho_n \right|^2 + 4\nu\kappa \int_{\Omega} \left|  \Delta \varrho_n \right|^2 dx -2\kappa \int_{\Omega} div (\varrho_n w_n) \Delta \varrho_n dx.
\end{equation}
By summing up (\ref{mult_1}) with (\ref{mult_2}) and integrating by parts we obtain (\ref{BD-entropy}).
\end{proof}

\subsubsection{Proof of Theorem \ref{th}}

The proof of Theorem \ref{th} is divided in several steps. First, we collect a priori estimates, then we will deal with the strong convergence of the density, the analysis of the momentum and finally with the convergence of the remaining non-linear terms.

\bigskip
\textit{- Step 1. A priori estimates}

\bigskip
From the continuity equation (\ref{cont._eps_cap}) and the energy equality (\ref{ee}) we can deduce the following a priori estimates
$$
\left\Vert \varrho_{n}\right\Vert _{L^{\infty}\left(0,T;L^{1}\left(\Omega\right)\right)}\leq C, \ \  \left\Vert \sqrt{\varrho_{n}}u_{n}\right\Vert _{L^{\infty}\left(0,T;L^{2}\left(\Omega\right)\right)}\leq C,
$$
\begin{equation} \label{est_1}
\left\Vert \nabla\varrho_{n}\right\Vert _{L^{\infty}\left(0,T;L^{2}\left(\Omega\right)\right)}\leq C, \ \ \left\Vert \sqrt{\varrho_{n}}D\left(u_{n}\right)\right\Vert _{L^{2}\left(0,T;L^{2}\left(\Omega\right)\right)}\leq C.
\end{equation}
Moreover, Lemma \ref{lm2.} yields
\begin{equation} \label{est_2}
\left\Vert \varrho_{n}\right\Vert _{L^{2}\left(0,T;H^{2}\left(\Omega\right)\right)}\leq C, \ \ \left\Vert \nabla\sqrt{\varrho_{n}}\right\Vert _{L^{\infty}\left(0,T;L^{2}\left(\Omega\right)\right)}\leq C. 
\end{equation}
Finally, by the second estimate in (\ref{est_2}) and the first estimate in (\ref{est_1}), we can conclude that
\begin{equation} \label{est_4}
\left\Vert \sqrt{\varrho_{n}}\right\Vert _{L^{\infty}\left(0,T;W^{1,2}\left(\Omega\right)\right)}\leq C.
\end{equation}	

\bigskip
\textit{- Step 2. Convergence of the density}

\bigskip
Now, thanks to (\ref{est_1}) and (\ref{est_4}), we deduce that $\varrho_{n}u_{n}=\sqrt{\varrho_{n}}\sqrt{\varrho_{n}}u_{n}$ is bounded in $L^{\infty}\left(0,T;L^{3/2}\left(\Omega\right)\right)$.  
The continuity equation thus gives $\partial_{t}\varrho_{n}$ bounded in $L^{\infty}\left(0,T;W^{-1,2}\left(\Omega\right)\right)$.  
Then, the Aubin-Lions Lemma gives the following strong convergence of the density, $$\varrho_{n}\rightarrow\varrho, \ \textrm{in}
\ L^{2}\left(0,T;L^{2}\left(\Omega\right)\right).$$
Indeed, we have more regularity on the density, and the following convergence holds (see Bresch et al. \cite{BDL}),
\begin{equation} \label{reg_dens}
\varrho_{n}\rightarrow\varrho \ \ \textrm{in} \ \ L^{2/s}\left(0,T;H^{1+s}\left(\Omega\right)\right)\cap C\left(\left[0,T\right];H^{s}\right) \ \ \textrm{for} \ \ \textrm{all}\ \ s\in\left(0,1\right).
\end{equation} 
Moreover, because of the strong convergence of $\varrho_{n}^{2}$ and $\varrho_{n}\nabla\varrho_{n}$ in $L^{2}\left(0,T;L^{2}\left(\Omega\right)\right)$ and the weak convergence of $\Delta\varrho_{n}$ in $L^{2}\left(0,T;L^{2}\left(\Omega\right)\right)$, we are allowed to pass to the limit in the last two terms of (\ref{wf_mom.}). 

\bigskip
\textit{- Step 3. Limit velocity}

\bigskip
Now, since from (\ref{reg_dens}) $\varrho_{n}$ converges almost everywhere in $\left(0,T\right)\times\Omega$ and from the second bound in (\ref{est_1}),  $\sqrt{\varrho_{n}}u_{n}$ converges weakly to some $g$ in $L^{2}\left(0,T;L^{2}\left(\Omega\right)\right)$, in the spirit of Bresch et al. \cite{BDL} this allows to define a limit velocity $u=g/\sqrt{\varrho}$ on the set of positive $\varrho$, and zero otherwise. 
Then, since $\varrho_{n}u_{n}=\sqrt{\varrho_{n}}\sqrt{\varrho_{n}}u_{n}$ converges weakly to $\sqrt{\varrho}g=\varrho u$, we proved $$\partial_{t}\varrho+\textrm{div}\left(\sqrt{\varrho}\sqrt{\varrho} u\right)=0, \ \ \left.\varrho\right|_{t=0}=\varrho_{0} \ \ \textrm{in} \ \ \mathcal{D}^{\prime}\left(\Omega\right).$$ Moreover, we are able to pass to the limit in the first two terms of (\ref{wf_mom.}). For the first one, we assume the strong convergence of the initial data, and for the second one  we use the strong convergence of $\varrho_{n}$ in $C\left(0,T;L^{3}\left(\Omega\right)\right)$ from (\ref{reg_dens}) combined with the weak convergence of $\varrho_{n}u_{n}$ in $L^{\infty}\left(0,T;L^{3/2}\left(\Omega\right)\right).$ 

Note that, although we can define $u=g/\sqrt{\varrho}$ outside the vacuum set, we do not know if $\sqrt{\varrho}u$ is zero on the vacuum set, hence  it is not clear weather
$$
\sqrt{\varrho_n} u_n \rightarrow \sqrt{\varrho} u \ \mbox{weakly in} \ L^{2}((0,T)\times \Omega).
$$

\bigskip
\textit{- Step 4. Convergence of the momentum}

\bigskip
In order to prove the strong convergence of the momentum $\varrho_{n}u_{n}$ to $\varrho u$ in $L^{2}\left(0,T;L^{2}\left(\Omega\right)\right)$, we observe that
$$\varrho_{n}u_{n}\otimes\varrho_{n}u_{n}=\varrho_{n}^{3/2}u_{n}\otimes\varrho_{n}^{1/2}u_{n}.
$$
Because of the weak convergence of $\sqrt{\varrho_{n}}u_{n}$ to $\sqrt{\varrho}u$ in $L^{2}\left(0,T;L^{2}\left(\Omega\right)\right)$, it is enough to prove the strong convergence of $\varrho_{n}^{3/2}u_{n}$ to $\varrho^{3/2}u$ in $L^{2}\left(0,T;L^{2}\left(\Omega\right)\right)$.
First, we notice that
$$D\left(\varrho_{n}^{3/2}u_{n}\right)=\varrho_{n}\sqrt{\varrho_{n}}D(u_{n})+\frac{3}{2}\sqrt{\varrho_{n}}u_{n}\underline{\otimes}\nabla\varrho_{n},
$$ where $a\underline{\otimes}b=\left(a\otimes b+b\otimes a\right)/2$. From the uniform bound of $\varrho_{n}$ and $\sqrt{\varrho_{n}}D(u_{n})$ respectively in $L^{\infty}\left(0,T;L^{6}\left(\Omega\right)\right)$ and $L^{2}\left(0,T;L^{2}\left(\Omega\right)\right)$, and of $\sqrt{\varrho_{n}}u_{n}$ and $\nabla\varrho_{n}$ in $L^{\infty}\left(0,T;L^{2}\left(\Omega\right)\right)$ and $L^{2}\left(0,T;L^{6}\left(\Omega\right)\right)$, we deduce that $D\left(\varrho_{n}^{3/2}u_{n}\right)$ is bounded uniformly in $L^{2}\left(0,T;L^{3/2}\left(\Omega\right)\right)$. Observing that the following holds,
$$\left\Vert \nabla\left(\varrho_{n}^{3/2}u_{n}\right)\right\Vert _{L^{3/2}\left(\Omega\right)}\leq\left\Vert D\left(\varrho_{n}^{3/2}u_{n}\right)\right\Vert _{L^{3/2}\left(\Omega\right)},
$$
by  the Sobolev embeddings $\varrho_{n}^{3/2}u_{n}$ is uniformly bounded in $L^{2}\left(0,T;L^{3}\left(\Omega\right)\right)$. 
Now, let us write
$$\left\Vert \varrho_{n}^{3/2}u_{n}-\varrho^{3/2}u\right\Vert _{L^{2}\left(0,T;L^{2}\left(\Omega\right)\right)}\leq\left\Vert \varrho_{n}^{3/2}u_{n}-\left(\varrho_{n}^{3/2}u_{n}\right)\ast\psi_{m}\right\Vert _{L^{2}\left(0,T;L^{2}\left(\Omega\right)\right)}
$$
$$+\left\Vert \left(\varrho_{n}^{3/2}u_{n}\right)\ast\psi_{m}-\left(\varrho^{3/2}u\right)\ast\psi_{m}\right\Vert _{L^{2}\left(0,T;L^{2}\left(\Omega\right)\right)}
$$
$$+\left\Vert \varrho^{3/2}u-\left(\varrho^{3/2}u\right)\ast\psi_{m}\right\Vert _{L^{2}\left(0,T;L^{2}\left(\Omega\right)\right)}
$$
where $\psi\in C^{\infty}\left(\Omega\right)$ is a mollifying kernel such that $\psi\geq0$, $\int_{\Omega}\psi dx=1$ and $\psi_{m}\left(\cdot\right)=m^{d}\psi\left(m\cdot\right)$ for all $m\in\mathbb{N}
$.
We have,
\begin{equation} \label{phi_m}
\left\Vert \varrho_{n}^{3/2}u_{n}-\left(\varrho_{n}^{3/2}u_{n}\right)\ast\psi_{m}\right\Vert _{L^{2}\left(0,T;L^{2}\left(\Omega\right)\right)}\leq\frac{C}{\sqrt{m}}\left\Vert \nabla\left(\varrho_{n}^{3/2}u_{n}\right)\right\Vert _{L^{2}\left(0,T;L^{3/2}\left(\Omega\right)\right)}
\end{equation}
and similarly for $\varrho^{3/2}u$. Next, for $l\in\mathbb{N}$, we have,
$$\left\Vert \left(\varrho_{n}^{3/2}u_{n}\right)\ast\psi_{m}-\left(\varrho^{3/2}u\right)\ast\psi_{m}\right\Vert _{L^{2}\left(0,T;L^{2}\left(\Omega\right)\right)}$$ 
\begin{equation} \label{psi_l}
\leq C_{m}\left(\frac{1}{l}+\left\Vert \left(\varrho_{n}^{3/2}u_{n}\right)\ast\psi_{l}-\left(\varrho^{3/2}u\right)\ast\psi_{l}\right\Vert _{L^{2}\left(0,T;L^{1}\left(\Omega\right)\right)}\right).
\end{equation}
Now, in order to handle and to separate the analysis for densities close to vacuum and bounded away from zero, we introduce a cut-off function $\beta\in C^{\infty}(\mathbb{R})$ such that $\beta(s)=1$ for $s\geq2$ and $\beta(s)=0$ for $s\leq1$, with $0\leq\beta(s)\leq1$, and for any $\alpha>0$ we define $\beta_{\alpha}(s)=\alpha^{-d}\beta\left(s/\alpha\right)$. We have
$$
\left\Vert \left(\varrho_{n}^{3/2}u_{n}\left(1-\beta_{\alpha}\left(\varrho_{n}\right)\right)\right)\ast\psi_{l}\right\Vert _{L^{2}\left(0,T;L^{1}\left(\Omega\right)\right)}
$$
\begin{equation} \label{dens_cut_1}
\leq\left\Vert \sqrt{\varrho_{n}}u_{n}\right\Vert _{L^{\infty}\left(0,T;L^{2}\left(\Omega\right)\right)}\left\Vert \varrho_{n}\left(1-\beta_{\alpha}\left(\varrho_{n}\right)\right)\right\Vert _{L^{2}\left(0,T;L^{2}\left(\Omega\right)\right)}\leq C\alpha
\end{equation}
and
$$\left\Vert \left(\varrho_{n}^{3/2}u_{n}\beta_{\alpha}\left(\varrho_{n}\right)\right)\ast\psi_{l}-\varrho_{n}^{-1/2}\beta_{\alpha}\left(\varrho_{n}\right)\left(\varrho_{n}^{2}u_{n}\right)\ast\psi_{l}\right\Vert _{L^{2}\left(0,T;L^{1}\left(\Omega\right)\right)}
$$
$$
\leq\frac{C}{l}\left\Vert \varrho_{n}^{2}u_{n}\right\Vert _{L^{2}\left(0,T;L^{2}\left(\Omega\right)\right)}\left\Vert \nabla\left(\varrho_{n}^{-1/2}\beta_{\alpha}\left(\varrho_{n}\right)\right)\right\Vert _{L^{\infty}\left(0,T;L^{2}\left(\Omega\right)\right)}
$$
\begin{equation} \label{dens_cut_2}
\leq\frac{C_{\alpha}}{l}\left\Vert \sqrt{\varrho_{n}}\right\Vert _{L^{\infty}\left(0,T;L^{6}\left(\Omega\right)\right)}\left\Vert \varrho_{n}^{3/2}u_{n}\right\Vert _{L^{2}\left(0,T;L^{3}\left(\Omega\right)\right)}\left\Vert \nabla\varrho_{n}\right\Vert _{L^{\infty}\left(0,T;L^{2}\left(\Omega\right)\right)}.
\end{equation}
By considering (\ref{dens_cut_1}) and (\ref{dens_cut_2}), we obtain
$$\left\Vert \varrho_{n}^{3/2}u_{n}-\varrho^{3/2}u\right\Vert _{L^{2}\left(0,T;L^{2}\left(\Omega\right)\right)}
$$
$$
\leq C\left(\frac{1}{\sqrt{m}}+\alpha\right)+\frac{C_{m,\alpha}}{l}
$$
$$+\left\Vert \varrho_{n}^{-1/2}\beta_{\alpha}\left(\varrho_{n}\right)\left(\varrho_{n}^{2}u_{n}\right)\ast\psi_{l}-\varrho^{-1/2}\beta_{\alpha}\left(\varrho\right)\left(\varrho^{2}u\right)\ast\psi_{l}\right\Vert _{L^{2}\left(0,T;L^{1}\left(\Omega\right)\right)}
$$
for a given $\alpha$, $m$ and $l$. Now, thanks to the strong convergence of $\varrho_{n}^{-1/2}\beta_{\alpha}\left(\varrho_{n}\right)$ to $\varrho^{-1/2}\beta_{\alpha}\left(\varrho\right)$ in $C\left(0,T;L^{2}\left(\Omega\right)\right)$, the next step consists in proving the strong convergence of $\left(\varrho_{n}^{2}u_{n}\right)\ast\psi_{l}$ to $\left(\varrho^{2}u\right)\ast\psi_{l}$ in $L^{2}\left(0,T;L^{2}\left(\Omega\right)\right)$. Since $\varrho_{n}^{2}u_{n}$ is bounded in $L^{2}\left(0,T;L^{2}\left(\Omega\right)\right) $, we need only to prove that $\partial_{t}\left(\varrho_{n}^{2}u_{n}\right)$ is bounded in $L^{q}\left(0,T;H^{-s}\left(\Omega\right)\right)$ for $q>1$ and some $s>0$. 

To this aim we rewrite the weak formulation \eqref{wf_mom.} in the following equivalent form, 
$$
\int_{0}^{T}\int_{\Omega}\partial_{t}(\varrho^{2}_n u_n)\varphi=-\int_{0}^{T}\int_{\Omega}\left[\varrho_n u_n\otimes\varrho_n u_n:\nabla\varphi\right.
$$
$$
-\varrho^2_n (u_n \cdot \varphi)\textrm{div}u_n 
-2\nu \varrho_n D\left(u_n \right) : \varrho_n  \nabla\varphi
-2\nu \varrho_n D\left(u_n \right) : \varphi \otimes \nabla \varrho_n 
$$
\begin{equation} \label{wf_approx}
\left. -\kappa\varrho^{2}_n\Delta\varrho_n\textrm{div}\varphi-2\kappa\varrho_n\left(\varphi\cdot\nabla\varrho_n\right)\Delta\varrho_n\right]dxdt.
\end{equation}

Now, the $L^{2}\left(0,T;L^{3}\left(\Omega\right)\right)$ bound of $\varrho_{n}^{3/2}u_{n}$ and the $L^{\infty}\left(0,T;L^{2}\left(\Omega\right)\right)$ of $\varrho_{n}^{1/2}u_{n}$, gives $\varrho_{n}u_{n}\otimes\varrho_{n}u_{n}$ bounded in $L^{2}\left(0,T;L^{6/5}\left(\Omega\right)\right)
$.
Next, the quantity $\varrho_{n}^{3/2}u_{n}$ is bounded in $L^{5/2}\left(0,T;L^{5/2}\left(\Omega\right)\right)$ thanks to the interpolation between  
$L^{2}\left(0,T;L^{3}\left(\Omega\right)\right)
$ and $L^{\infty}\left(0,T;L^{3/2}\left(\Omega\right)\right)$. Consequently, we have $\varrho_{n}^{2}u_{n}\textrm{div}u_{n}$ bounded in $L^{10/9}\left(0,T;L^{10/9}\left(\Omega\right)\right)$. Next, the bound of $\varrho_{n}^{3/2}$ in $L^{\infty}\left(0,T;L^{4}\left(\Omega\right)\right)$ and of $\sqrt{\varrho_{n}}\nabla\varrho_{n}$ in $L^{4}\left(0,T;L^{2}\left(\Omega\right)\right)$ gives $\varrho_{n}^{2}D\left(u_{n}\right)$ bounded in $L^{2}\left(0,T;L^{4/3}\left(\Omega\right)\right)
$ and $\varrho_{n}D\left(u_{n}\right)_{ij}\partial_{j}\varrho_{n}$ in $L^{4/3}\left(0,T;L^{1}\left(\Omega\right)\right)$. Finally, $\varrho_{n}^{2}\Delta\varrho_{n}
$ and $\varrho_{n}\Delta\varrho_{n}\nabla\varrho_{n}
$ are bounded, respectively, in $L^{2}\left(0,T;L^{1}\left(\Omega\right)\right)
$ and $L^{4/3}\left(0,T;L^{1}\left(\Omega\right)\right)
$.
Then, it follows from (\ref{wf_approx}) that
\begin{equation} \label{dt_rhou}
\left|\int_{0}^{T}\int_{\Omega}\partial_{t}\left(\varrho_{n}^{2}u_{n}\right)\cdot\varphi dxdt\right|\leq C\left(\left\Vert \varphi\right\Vert _{L^{r}\left(0,T;L^{\infty}\left(\Omega\right)\right)}+\left\Vert \nabla\varphi\right\Vert _{L^{r}\left(0,T;L^{\infty}\left(\Omega\right)\right)}\right),
\end{equation}
with $r<+\infty$. This means that $\partial_{t}\left(\varrho_{n}^{2}u_{n}\right)$ is bounded in $L^{q}\left(0,T;H^{-s}\left(\Omega\right)\right)$ for $q>1$ and $s>5/2$. Consequently, $\varrho_{n}^{3/2}u_{n}$ converges strongly to $\varrho^{3/2}u$ in $L^{2}\left(0,T;L^{2}\left(\Omega\right)\right)$, and $\varrho_{n}u_{n}$ strongly to $\varrho u$ in $L^{2}\left(0,T;L^{2}\left(\Omega\right)\right)$.

\bigskip
\textit{- Step 5. Convergence of the remaining non-linear terms}

\bigskip
The convergence of the  term 
\begin{equation} \label{convergence_nlt}
\int_{0}^T \int_\Omega \sqrt{\varrho_n} u_n \otimes \sqrt{\varrho_n} u_n : \varphi \nabla\varrho_n.
\end{equation}
is obtained thanks to the strong convergence of the quantities $\varrho_n u_n$ and $\nabla \varrho_n$, together with the weak convergence of the velocity field $u_n$.

Concerning the viscous terms on the right-hand-side of (\ref{viscous_t}), the convergence of the first two quantities is allowed by the strong convergence of the density $\varrho_n$, the momentum $\varrho_n u_n$ and the gradient of the density $\nabla \varrho_n$, together with the weak convergence of $\nabla \nabla \varrho_n$ and $\Delta \varrho_n$. Then, the strong convergence of $\sqrt{\varrho_n}$ together with the weak convergence of $\nabla \sqrt{\varrho_n}$ allow to pass to the limit in the remaining two quantities.

\subsection{Pressureless system, vacuum  and the heat equation} \label{heat.}
This section is devoted to the proof of the Theorems \ref{heat_eq} and \ref{heat_eq_2}. Hence for the  special choice of an irrotational  initial velocity, namely expressed as a gradient of a given potential we are able to describe the propagation of the vacuum zones and connection between the solutions of the pressureless system and the heat equation. First, we deal with the case of the initial density $\varrho_{0}\in L^{1}\left(\Omega\right)$ away from the vacuum, namely $\varrho_{0}>0$ and then we extend this result to the general case $\varrho_{0}\geq0$.

\subsubsection{Proof of Theorem \ref{heat_eq}}
	Assume that $\left(\varrho,u\right)$ are classical solutions of (\ref{cont._eps_cap}) - (\ref{mom._eps_cap}). We are looking for a solution of the form $\left(\varrho,-\nabla\phi\left(\varrho\right)\right)$. The continuity equation yields
	\begin{equation} \label{step1}
	\partial_{t}\varrho-\textrm{div}\left(\varrho\nabla\phi\left(\varrho\right)\right)=0.
	\end{equation}
	Since $\phi^{\prime}\left(\varrho\right)=2\nu/\varrho$, we have
	\begin{equation} \label{step2}
	\partial_{t}\varrho-2\nu\Delta\varrho=0.
	\end{equation}
	Now, we have to check that the momentum equation shows a compatibility with the continuity equation keeping an irrotational structure. We have,
	\begin{equation} \label{step3}
	\partial_{t}\left(\varrho u\right)=-\partial_{t}\left(\varrho\nabla\phi\left(\varrho\right)\right)=-2\nabla\partial_{t}\mu\left(\varrho\right).
	\end{equation}
	Indeed, $\varrho\nabla\phi\left(\varrho\right)=\nabla\mu\left(\varrho\right)$.
	Next, we have
	\begin{equation} \label{step4}
	-2\textrm{div}\left(\mu\left(\varrho\right)Du\right)=2\textrm{div}\left(\mu\left(\varrho\right)\nabla\nabla\phi\left(\varrho\right)\right)$$
	$$=2\mu\left(\varrho\right)\nabla\Delta\phi\left(\varrho\right)+2\nabla\mu\left(\varrho\right)\cdot\nabla\nabla\phi\left(\varrho\right),
	\end{equation}
	and
	\begin{equation} \label{step5}
	\textrm{div}\left(\varrho u\otimes u\right)=2\Delta\phi\left(\varrho\right)\nabla\mu\left(\varrho\right)+2\nabla\nabla\mu\left(\varrho\right)\cdot\nabla\phi\left(\varrho\right).
	\end{equation}
	Combining (\ref{step4}) with (\ref{step5}), we get
	\begin{equation} \label{step6}
	\textrm{div}\left(\varrho u\otimes u\right)-2\textrm{div}\left(\mu\left(\varrho\right)Du\right)=2\nabla\left(\mu\left(\varrho\right)\Delta\phi\left(\varrho\right)\right)+2\nabla\left(\nabla\mu\left(\varrho\right)\cdot\nabla\phi\left(\varrho\right)\right).
	\end{equation} 
	Now, using (\ref{step3}), (\ref{step6}) and the momentum equation (\ref{mom._eps_cap}), we have
	\begin{equation} \label{step7}
	\partial_{t}\left(\varrho u\right)+\textrm{div}\left(\varrho u\otimes u\right)-2\textrm{div}\left(\mu\left(\varrho\right)Du\right)$$$$
	=-2\nabla\left(\partial_{t}\mu\left(\varrho\right)-\varrho\mu^{\prime}\left(\varrho\right)\Delta\phi\left(\varrho\right)-\nabla\mu\left(\varrho\right)\cdot\nabla\phi\left(\varrho\right)\right)+\kappa\varrho\nabla\Delta\varrho,
	\end{equation}
	where we used $\mu\left(\varrho\right)=\varrho\mu^{\prime}\left(\varrho\right)$. Now, since $2\mu^{\prime}\left(\varrho\right)=\varrho\phi^{\prime}\left(\varrho\right)$, we can compute
	$$\Delta\mu\left(\varrho\right)=\frac{1}{2}\varrho\Delta\phi\left(\varrho\right)+\frac{1}{2}\nabla\varrho\cdot\nabla\phi\left(\varrho\right),$$
	and, consequently,
	\begin{equation} \label{step8}
	4\mu^{\prime}\left(\varrho\right)\Delta\mu\left(\varrho\right)=2\varrho\mu^{\prime}\left(\varrho\right)\Delta\phi\left(\varrho\right)+2\nabla\mu\left(\varrho\right)\cdot\nabla\phi\left(\varrho\right).
	\end{equation}
	Combining (\ref{step7}) with (\ref{step8}), we obtain
	\begin{equation} \label{step9}
	\partial_{t}\left(\varrho u\right)+\textrm{div}\left(\varrho u\otimes u\right)-2\textrm{div}\left(\mu\left(\varrho\right)Du\right)$$$$
	=-\nabla\left(2\mu^{\prime}\left(\varrho\right)\left(\partial_{t}\varrho-2\Delta\mu\left(\varrho\right)\right)\right)+\kappa\varrho\nabla\Delta\varrho.
	\end{equation}
Since, thanks to the mass equations,  the above equation is compatible with the  momentum equations, the proof is concluded in the smooth case. In the   case of $\rho_{0}>0$, $\rho_{0}\in L^{1}(\Omega)$  the proof follows from  the standard theory of the heat equation (see Evans \cite{E}). Indeed, the parabolic equation immediately smooths any initial data belonging to $L^{p}(\Omega)$, $1\leq p\leq\infty
	$, to a function $\varrho\left(\cdot,t\right)\in C^{\infty}\left(\Omega\right)$. Therefore, the unique solution of  $\partial_{t}\varrho-2\nu\Delta\varrho=0$ for $t>0$ is non-negative and smooth and all the previous formal computations are justified yielding a solution of (\ref{cont._eps_cap}) - (\ref{mom._eps_cap}) with $\varrho>0$ solving (\ref{heat}) and $u=-\nabla\varphi\left(\varrho\right)$. 

\subsubsection{Proof of Theorem \ref{heat_eq_2}}
	
	In order to prove the existence result it suffices to construct a sequence of global regular solutions $(\varrho_{n},u_{n})$ verifying the assumptions on the initial data (\ref{ei}) and (\ref{f}).
	We set $\varrho_{0}^{n}=\varrho_{0}+\frac{\widetilde{\varrho_{0}}}{n}$, where $\widetilde{\varrho_{0}}$ is a continuous, strictly positive functions, such that $\sqrt{\widetilde{\varrho_{0}}}\nabla\phi\left(\widetilde{\varrho_{0}}\right)
	$ and $\left|\nabla\widetilde{\varrho_{0}}\right|$, $\left|\nabla\sqrt{\widetilde{\varrho_{0}}}\right|$ are bounded in $L^{2}\left(\Omega\right)$. From Theorem \ref{heat_eq}, we know that there exists a smooth solutions $(\varrho_{n}, u_{n})$ with initial data $\left(\varrho_{0}^{n},-\nabla\phi\left(\varrho_{0}^{n}\right)\right)$. Moreover we have 
	\begin{equation} \label{exs.}
	|\sqrt{\varrho_{0}^{n}}\nabla\phi\left(\varrho_{0}^{n}\right)|=\left|\sqrt{\varrho_{0}^{n}}\phi^{\prime}\left(\varrho_{0}^{n}\right)\nabla\varrho_{0}^{n}\right|\leq\frac{\left|\nabla\varrho_{0}\right|}{\sqrt{\varrho_{0}}}+\frac{1}{\sqrt{n}}\frac{\left|\nabla\widetilde{\varrho_{0}}\right|}{\sqrt{\widetilde{\varrho_{0}}}}.
	\end{equation}
	It implies that $\sqrt{\varrho_{0}^{n}}\nabla\phi\left(\varrho_{0}^{n}\right)$ is uniformly bounded in $L^{2}\left(\Omega\right)$ by using the fact that $\sqrt{\varrho_{0}}\nabla\phi\left(\varrho_{0}\right)$ and $\sqrt{\widetilde{\varrho_{0}}}\nabla\phi\left(\widetilde{\varrho_{0}}\right)$ are bounded in $L^{2}\left(\Omega\right)$. Next, in a similar way we have also the following $L^{2}$ bounds,
	\begin{equation} \label{exs.1}
	\left\Vert \nabla\varrho_{0}^{n}\right\Vert _{L^{2}\left(\Omega\right)}\leq\left\Vert \nabla\varrho_{0}\right\Vert _{L^{2}\left(\Omega\right)}+\frac{1}{n}\left\Vert \nabla\widetilde{\varrho_{0}}\right\Vert _{L^{2}\left(\Omega\right)},
	\end{equation}
	\begin{equation} \label{exs.1b}
	\left\Vert \nabla\sqrt{\varrho_{0}^{n}}\right\Vert _{L^{2}\left(\Omega\right)}\leq\left\Vert \frac{\nabla\sqrt{\varrho_{0}}}{\sqrt{\varrho_{0}^{n}}}\right\Vert _{L^{2}\left(\Omega\right)}+\frac{1}{n}\left\Vert \frac{\nabla\sqrt{\widetilde{\varrho_{0}}}}{\sqrt{\varrho_{0}^{n}}}
	\right\Vert _{L^{2}\left(\Omega\right)}.
	\end{equation}
	As a consequence $\left(\varrho_{0}^{n},-\nabla\phi\left(\varrho_{0}^{n}\right)\right)$ fulfils (\ref{ei}) and (\ref{f}) and  by Theorem \ref{th}  we obtain  the existence of a global weak solution $(\varrho, u)$ (to be precise, here we exploit the stability result contained in the proof of Theorem \ref{th}).
	
	Now, in order to prove that $(\varrho,u)$ is the unique strong solution of (\ref{heat}) we recall that the heat equation verifies the so-called $L^1$-contraction principle (see for example Chapter 2 of Galaktionov and V\' azquez \cite{GaVa}), which applied to our  sequence 
	$\varrho_{n}$, which has a strictly positive initial data,  reads 
	\begin{equation} \label{contr}
	\left\Vert \varrho_{n}(t,\cdot)-\varrho_{m}(t,\cdot)\right\Vert _{L^{1}\left(\Omega\right)}\leq\left\Vert \varrho_{0}^{n}-\varrho_{0}^{m}\right\Vert _{L^{1}\left(\Omega\right)},
	\end{equation}
	for every $n,m\in\mathbb{N}$. Since we have that  $\varrho_{0}^{n}\rightarrow\varrho_{0}$ in $L^{1}\left(\Omega\right)$, the sequence $\left(\varrho_{n}\right)_{n\in\mathbb{N}}$ is a Cauchy sequence strongly convergent to $\varrho_{1}$ in $L^{1}\left(\Omega\right)$, but since we have proved that $\varrho_{n}$ converges strongly to  $\varrho$, we  have $\varrho_{1}=\varrho$, and this implies in particular that $\varrho$ satisfies the equation (\ref{heat}). 

\subsection{High compressible limit} \label{conv.}

The last result shows the convergence (in a distribution sense) of a global weak solution $(\varrho_\varepsilon,u_\varepsilon)$ of the system (\ref{cont._1_cap}) - (\ref{mom._1_cap}) to a weak solution $(\varrho,u)$ of (\ref{cont._eps_cap}) - (\ref{mom._eps_cap}), as $\varepsilon\rightarrow0$. For fixed  $\varepsilon > 0$,  we have the existence of weak solutions $(\varrho_\varepsilon,u_\varepsilon)$ satisfying (\ref{weak_sol_reg}) and the energy inequality (\ref{ei_weak}). With a similar procedure used to prove the existence of weak solutions in the case of $\varepsilon=0$ it  is possible to prove the following convergences
\begin{equation} \label{conv-1}
\varrho_{\varepsilon}\rightarrow\varrho\mbox{ in }L^{2/s}\left(0,T;H^{1+s}\left(\Omega\right)\right)\cap C\left(\left[0,T\right];H^{s}\right)\mbox{ for }\mbox{ all }s\in\left(0,1\right),
\end{equation}
\begin{equation} \label{conv-2}
\varrho_{\varepsilon}u_{\varepsilon}\rightharpoonup\varrho u\mbox{ in }L^{2}\left(0,T;L^{2}\left(\Omega\right)\right),
\end{equation}
\begin{equation} \label{conv-3}
\varrho_{\varepsilon}^{2}u_{\varepsilon}\rightharpoonup\varrho^{2}u\mbox{ in }L^{2}\left(0,T;L^{2}\left(\Omega\right)\right),
\end{equation}
\begin{equation} \label{conv-4}
\sqrt{\varrho_{\varepsilon}}u_{\varepsilon}\rightharpoonup\sqrt{\varrho} u\mbox{ in }L^{2}\left(0,T;L^{2}\left(\Omega\right)\right),
\end{equation}
\begin{equation} \label{conv-5}
\varrho_{\varepsilon}^{3/2}u_{\varepsilon}\rightarrow\varrho^{3/2}u\mbox{ in }L^{2}\left(0,T;L^{2}\left(\Omega\right)\right),
\end{equation}
\begin{equation} \label{conv-6}
\varrho_{\varepsilon}^{3/2}\rightarrow\varrho^{3/2}\mbox{ in }L^{2}\left(0,T;L^{2}\left(\Omega\right)\right),
\end{equation}
\begin{equation} \label{conv-7}
\sqrt{\varrho_{\varepsilon}}\nabla\varrho_{\varepsilon}\rightarrow\sqrt{\varrho}\nabla\varrho\mbox{ in }L^{2}\left(0,T;L^{2}\left(\Omega\right)\right),
\end{equation}
\begin{equation} \label{conv-8}
\varrho_{\varepsilon}^{2}\rightarrow\varrho^{2}\mbox{ in }L^{2}\left(0,T;L^{2}\left(\Omega\right)\right),
\end{equation}
\begin{equation} \label{conv-9}
\varrho_{\varepsilon}\nabla\varrho_{\varepsilon}\rightarrow\varrho\nabla\varrho\mbox{ in }L^{2}\left(0,T;L^{2}\left(\Omega\right)\right),
\end{equation}  
\begin{equation} \label{conv-10}
\Delta\varrho_{\varepsilon}\rightharpoonup\Delta\varrho\mbox{ in }L^{2}\left(0,T;L^{2}\left(\Omega\right)\right).
\end{equation}

It is important to stress the fact that there are only two points which deserve some attention. The first one is when we deal with the equivalent of the estimate (\ref{dt_rhou}) for 
$\varepsilon>0$. Indeed in this case, in the weak formulation,  the pressure contributes with a term of the form $\varepsilon\varrho^{\gamma+1}$. The second point is to perform the convergence of the pressure term itself. Therefore,  in order to prove Theorem \ref{th.1}, we need to show a  bound independent by $\varepsilon$ of the quantity $\varrho^{\gamma+1}_\varepsilon$ and 
\begin{equation} \label{convergence}
	\int_{0}^{T}\int_{\Omega}\varepsilon\varrho^{\gamma+1}_\varepsilon\textrm{div}\varphi dxdt\rightarrow0, \ \ \textrm{as} \ \ \varepsilon\rightarrow0.
\end{equation}


\subsubsection{Proof of Theorem \ref{th.1}}
The convergence (\ref{convergence}) can be proved thanks to the bound on the pressure term coming form the energy inequality (\ref{ei_weak}), and the regularity properties of the density coming from Lemma \ref{lm2.}. 
We have
$$
\varepsilon\varrho^{\gamma+1}_\varepsilon=\varepsilon^{1/\gamma}\varrho_\varepsilon\cdot\varepsilon^{\left(\gamma-1\right)/\gamma}\varrho^{\gamma}_\varepsilon.
$$
Consequently, we can write
$$
\int_{0}^{T}\int_{\Omega}\varepsilon\varrho^{\gamma+1}_\varepsilon dxdt\leq\left\Vert \varepsilon^{1/\gamma}\varrho_\varepsilon\right\Vert _{L^{\infty}\left(0,T;L^{\gamma}\right)}\left\Vert \varepsilon^{\left(\gamma-1\right)/\gamma}\varrho^{\gamma}_\varepsilon\right\Vert _{L^{1}\left(0,T;L^{\gamma^{\prime}}\right)} 
$$
$$
\leq C\varepsilon^{1/\gamma^{\prime}}\left\Vert \varrho^{\gamma}_\varepsilon\right\Vert _{L^{1}\left(0,T;L^{\gamma^{\prime}}\right)}
$$
where $\gamma^{\prime}=\gamma/\left(\gamma-1\right)$. The bound for $\varepsilon^{1/\gamma}\varrho_\varepsilon$ in $L^{\infty}\left(0,T;L^{\gamma}\right)$ was obtained from the energy inequality (\ref{ei_weak}). 
Now, we write
$$
\left\Vert \varrho^{\gamma}_\varepsilon\right\Vert _{L^{1}\left(0,T;L^{\gamma^{\prime}}\right)}=\int_{0}^{T}\left\Vert \varrho_\varepsilon\right\Vert _{L^{\gamma^{2}/\left(\gamma-1\right)}}^{\gamma}dt,
$$
and we apply the following interpolation inequality 
$$
\left\Vert \varrho_\varepsilon\right\Vert _{L^{\gamma^{2}/\left(\gamma-1\right)}}\leq\left\Vert \varrho_\varepsilon\right\Vert _{L^{3}}^{\alpha}\left\Vert \varrho_\varepsilon\right\Vert _{L^{\infty}}^{1-\alpha},
$$
with $\alpha=3(\gamma-1)/\gamma^{2}$ and $\gamma>1$. The $L^{\infty}\left(0,T;L^{3}\right)$ bound for $\varrho_\varepsilon$ gives
$$
\int_{0}^{T}\left\Vert \varrho_\varepsilon\right\Vert _{L^{\gamma^{2}/\left(\gamma-1\right)}}^{\gamma}dt\leq C.
$$
As a result, we obtain (\ref{convergence}) and we conclude the proof of Theorem \ref{th.1}.

\section{Quantum fluids} \label{qf}
In this section we deal with the quantum case, namely when $\kappa\left(\varrho\right)=\kappa^2/\varrho$ in (\ref{mom._1}). The system (\ref{cont._1}) - (\ref{mom._1}) reads as
\begin{equation} \label{cont._1_qt}
\partial_{t}\varrho+\textrm{div}\left(\varrho u\right)=0,
\end{equation}
\begin{equation} \label{mom._1_qt}
\partial_{t}\left(\varrho u\right)+\textrm{div}\left(\varrho u\otimes u\right)-2\nu\textrm{div}\left(\varrho D\left(u\right)\right)+\varepsilon\nabla p(\varrho)=2\kappa^2\varrho\nabla\left(\frac{\Delta\sqrt{\varrho}}{\sqrt{\varrho}}\right).
\end{equation}
For fixed $\varepsilon>0$, the quantum Navier-Stokes system have been recently analyzed by several authors. With the same definition of weak solutions used by Bresch et al. \cite{BDL}, Dong \cite{Do}, Jiang \cite{Ji} and J\" ungel \cite{J} proved the global-in-time existence of weak solutions in three-dimension, for large data and under the condition $\gamma>3$. 

Adding a damping term and a singular pressure close to vacuum, Vasseur and Yu \cite{VY-1} and Gisclon and Lacroix-Violet \cite{GLV} proved the global-in-time existence of weak solutions for large data in three-dimension, respectively. 


Antonelli and Spirito \cite{AS_1}, \cite{AS} considered the quantum Navier-Stokes system both in two and in three space dimensions and proved the global existence of finite energy weak solutions for large initial data under some restriction on $\nu$, $\kappa$ and $\gamma$ in a periodic domain $\Omega=\mathbb{T}^{d}$ $(d=2,3)$. 
More precisely, they proved the existence of weak solutions in the following class of regularity
\begin{equation} \label{integr_quantum}
	\left\{ \begin{array}{c}
		\varrho\in L^{\infty}\left(0,T;L^{1}\left(\Omega\right)
		\cap L^{\gamma}\left(\Omega\right)
		\right),\\
		\sqrt{\varrho}u\in L^{\infty}\left(0,T;L^{2}\left(\Omega\right)\right),\\
		\sqrt{\varrho}\in L^{\infty}\left(0,T;H^{1}\left(\Omega\right)\right),
	\end{array}\right.
\end{equation}
satisfying the energy inequality
\begin{equation} \label{e_q_p}
	\frac{d}{dt}\int_{\Omega}\frac{1}{2}\varrho\left|u\right|^{2}
	+\varepsilon\frac{\varrho^{\gamma}}{\gamma-1}
	+2\kappa^{2}\left|\nabla\sqrt{\varrho}\right|^{2}dx\leq0
\end{equation}
with the following conditions on the initial data
\begin{equation} \label{ic_quantum}
\begin{array}{c} \smallskip
\varrho_{0}\geq0 \mbox{ in } \Omega,\ \smallskip
\varrho_{0}\in L^{1}\cap L^{\gamma}\left(\Omega\right),\ \smallskip
\nabla\sqrt{\varrho_{0}}\in L^{2}\cap L^{2+\eta}\left(\Omega\right),\\ \smallskip
u_{0}=0 \mbox{ on } \left\{ \varrho_{0}=0\right\},\ \smallskip
\sqrt{\varrho_{0}}u_{0}\in L^{2}\cap L^{2+\eta}\left(\Omega\right).
\end{array}
\end{equation}

Their result cna be summarized in the  next theorems.
\begin{thm} \label{AS-1}
Let $d=2$. Let $\nu$, $\kappa$ and $\gamma$ positive such that $\kappa<\nu$ and $\gamma>1$, assume that (\ref{ic_quantum}) hold. Then, for any $0<T<\infty$ there exists a finite energy weak solution of the system (\ref{cont._1_qt}) - (\ref{mom._1_qt}) on $\left(0,T\right)\times\mathbb{T}^{2}$, that means a solution satisfying (\ref{integr_quantum}) - \eqref{e_q_p}, with the  continuity equation satisfying the following weak formulation for all $\varphi\in C_{c}^{\infty}\left(\left[0,T\right];C^{\infty}\left(\Omega\right)\right)$ such that $\varphi(T,\cdot)=0$,
\begin{equation} \label{cont_D_q_pressure}
\int_{\Omega}\varrho_{0}\cdot\varphi\left(0,\cdot\right)dx+\int_{0}^{T}\int_{\Omega}\varrho\cdot\partial_{t}\varphi+\int_{0}^{T}\int_{\Omega}\left(\sqrt{\varrho}\sqrt{\varrho}u\right):\nabla\varphi dxdt=0;
\end{equation}
and the momentum equation satisfying the following weak formulation for all $\varphi\in C_{c}^{\infty}\left(\left[0,T\right];C^{\infty}\left(\Omega\right)\right)
	$ such that $\varphi(T,\cdot)=0$,
	$$\int_{\Omega}\varrho_{0}u_{0}\cdot\varphi\left(0,\cdot\right)dx+\int_{0}^{T}\int_{\Omega}\left[\sqrt{\varrho}\left(\sqrt{\varrho}u\right)\cdot\partial_{t}\varphi+\sqrt{\varrho} u\otimes\sqrt{\varrho}:\nabla \varphi
	+\varepsilon\varrho^{\gamma}\textrm{div}\varphi
	\right.
	$$
	$$
	-2\nu\left(\sqrt{\varrho}u\otimes\nabla\sqrt{\varrho}\right):\nabla\varphi-2\nu\left(\nabla\sqrt{\varrho}\otimes\sqrt{\varrho}u\right):\nabla\varphi
	$$
	$$
	+\nu\sqrt{\varrho}\sqrt{\varrho}u\Delta\varphi+\nu\sqrt{\varrho}\sqrt{\varrho}u\nabla\textrm{div}\varphi
	$$
	\begin{equation} \label{wf_mom_q_pressure}
	\left.-4\kappa^{2}\left(\nabla\sqrt{\varrho}\otimes\nabla\sqrt{\varrho}\right):\nabla\varphi+2\kappa^{2}\sqrt{\varrho}\nabla\sqrt{\varrho}\nabla\textrm{div}\varphi\right]dxdt
	=0.
	\end{equation} 
\end{thm}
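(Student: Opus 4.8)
\bigskip
\noindent\textbf{Proof plan.} Theorem \ref{AS-1} is the two-dimensional existence result of Antonelli and Spirito \cite{AS_1}, \cite{AS}, and the plan is to reproduce the architecture of their argument. First I would approximate \eqref{cont._1_qt}--\eqref{mom._1_qt} by a family of regularized systems depending on small parameters: an artificial viscosity $\delta\Delta u$ (or a higher order capillary regularization of the density), a small ``cold'' pressure near vacuum, and drag terms $r_0 u+r_1\varrho|u|^2u$. These additions preserve both the physical energy balance and the Bresch--Desjardins (BD) structure, which is available precisely because here $\mu(\varrho)=\nu\varrho$ and $\lambda(\varrho)=0$. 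For each fixed choice of parameters one obtains sufficiently regular solutions by a Faedo--Galerkin scheme in the spirit of Feireisl et al. \cite{FSN}, \cite{FJN} together with the construction for degenerate viscosities of Vasseur--Yu \cite{VY} and Li--Xin \cite{LX}; alternatively one may work directly with the ``augmented'' reformulation used later in the quantum case.

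Next I would collect the uniform estimates. Testing the momentum equation with $u$ gives the energy inequality \eqref{e_q_p}, hence $\sqrt{\varrho}u\in L^\infty(0,T;L^2)$, $\varrho\in L^\infty(0,T;L^1\cap L^\gamma)$, $\nabla\sqrt{\varrho}\in L^\infty(0,T;L^2)$ and $\sqrt{\varrho}D(u)\in L^2((0,T)\times\Omega)$. Testing with the effective velocity $u+\nu\nabla\log\varrho$ yields the BD entropy inequality, which additionally controls $\nabla\varrho^{1/4}\in L^4((0,T)\times\Omega)$, $\nabla^2\sqrt{\varrho}\in L^2((0,T)\times\Omega)$ and the antisymmetric part $\sqrt{\varrho}\,A(u)\in L^2((0,T)\times\Omega)$; it is here that the hypothesis $\kappa<\nu$ enters, because the quantum Bohm term contributes the nonnegative quantity $2\kappa^2\int|\nabla^2\sqrt{\varrho}|^2$ whereas the residual terms generated by the viscous part can be absorbed only when $\kappa$ is strictly below $\nu$. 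Since $d=2$, the Ladyzhenskaya and Gagliardo--Nirenberg inequalities upgrade these bounds to $\varrho\in L^p((0,T)\times\Omega)$ for every $p<\infty$, to $\sqrt{\varrho}u\in L^2(0,T;L^p)$ and $\nabla\sqrt{\varrho}\in L^2(0,T;L^p)$ for suitable $p$, and allow the extra integrability $\nabla\sqrt{\varrho_0},\sqrt{\varrho_0}u_0\in L^{2+\eta}$ from \eqref{ic_quantum} to propagate in time; in particular the pressure $\varepsilon\varrho^\gamma$ is equi-integrable for any $\gamma>1$.

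Then I would pass to the limit. The continuity equation and the bounds above give time-equicontinuity of $\varrho_n$, so Aubin--Lions yields $\varrho_n\to\varrho$ strongly and $\sqrt{\varrho_n}\to\sqrt{\varrho}$ strongly in $L^2(0,T;H^1)$ and a.e.; writing $m_n=\varrho_n u_n=\sqrt{\varrho_n}\,\Lambda_n$ with $\Lambda_n=\sqrt{\varrho_n}u_n\rightharpoonup\Lambda$ defines a limit velocity $u$ off the vacuum. The Bohm term is rewritten in the divergence form of \eqref{wf_mom_q_pressure}, involving only $\sqrt{\varrho}\nabla^2\sqrt{\varrho}$ and $\nabla\sqrt{\varrho}\otimes\nabla\sqrt{\varrho}$, so weak convergence of $\nabla^2\sqrt{\varrho_n}$ against strong convergence of $\sqrt{\varrho_n}$ and $\nabla\sqrt{\varrho_n}$ is enough; the viscous term is handled through $\nabla(\sqrt{\varrho_n}\sqrt{\varrho_n}u_n)$ and $\nabla\sqrt{\varrho_n}\otimes\sqrt{\varrho_n}u_n$, and the drag and cold-pressure contributions vanish. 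For the convective term $\sqrt{\varrho_n}u_n\otimes\sqrt{\varrho_n}u_n$ I would establish strong $L^2_{\mathrm{loc}}$ convergence of $\Lambda_n$ via a truncation of the density (as in the cutoff used in the proof of Theorem \ref{th}): on the region where $\varrho_n\le\alpha$ the contribution is small by $\|\sqrt{\varrho_n}u_n\|_{L^\infty_tL^2}$ times a factor tending to $0$ with $\alpha$, while where $\varrho_n\ge\alpha$ the full gradient $\nabla u_n$ is bounded in $L^2$ (symmetric part from the energy, antisymmetric part from the BD entropy), so a local Aubin--Lions argument applies. Passing to the limit in every term of \eqref{cont_D_q_pressure}--\eqref{wf_mom_q_pressure} then produces a finite energy weak solution.

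The main obstacle is the same as in every degenerate-viscosity problem: neither $u$ nor $\nabla u$ is controlled in the vacuum set, so the strong compactness of $\sqrt{\varrho_n}u_n$ required in the convective term is not automatic. The Mellet--Vasseur estimate, which usually supplies it, does not apply here since the Bohm term lacks a favourable sign in that computation; the remedy, which I would borrow from Antonelli and Spirito, is to exploit the full BD entropy (in particular the bounds on the antisymmetric velocity gradient and on $\nabla^2\sqrt{\varrho}$) together with the density truncation above, and this closes in two space dimensions precisely thanks to the stronger Sobolev embeddings and under the restrictions $\kappa<\nu$, $\gamma>1$.
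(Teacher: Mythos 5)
You should first be aware that the paper does not prove Theorem \ref{AS-1} at all: it is recalled verbatim from Antonelli and Spirito \cite{AS}, \cite{AS_1}, so the paper's ``proof'' is by citation, and your sketch can only be measured against the argument in those references. Its skeleton (regularized approximation with drag/cold pressure, the energy inequality \eqref{e_q_p}, the Bresch--Desjardins entropy, compactness of the density, passage to the limit in \eqref{cont_D_q_pressure}--\eqref{wf_mom_q_pressure}) is indeed the right one.

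The genuine gap is in the key compactness step for the convective term. The heart of the Antonelli--Spirito proof is the effective-velocity change of unknown $w=u+\mu\nabla\log\varrho$ with $\mu=\nu-\sqrt{\nu^{2}-\kappa^{2}}$, $\lambda=\sqrt{\nu^{2}-\kappa^{2}}$ --- exactly what this paper recalls in the Appendix, Lemma \ref{eq_w} --- which eliminates the third-order Bohm term and turns the system into a degenerate viscous system for $(\varrho,w)$ to which a Mellet--Vasseur type estimate applies (Lemma \ref{MV-der} and \eqref{MV-est}); this yields strong $L^{2}$ convergence of $\sqrt{\varrho_{n}}w_{n}$ and hence of $\sqrt{\varrho_{n}}u_{n}=\sqrt{\varrho_{n}}w_{n}-2\mu\nabla\sqrt{\varrho_{n}}$, using the $H^{2}$ bound on $\sqrt{\varrho_{n}}$. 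This transformation is also the only place where $\kappa<\nu$ enters (it makes $\mu,\lambda$ real and positive), not, as you assert, an absorption of residual viscous terms in the BD entropy. Your substitute for this device --- a truncation in the density level $\alpha$ --- does not close: on $\{\varrho_{n}\le\alpha\}$ the convective contribution is controlled only by $\Vert\sqrt{\varrho_{n}}u_{n}\Vert_{L^{\infty}_{t}L^{2}_{x}}^{2}$, which carries no factor tending to $0$ with $\alpha$ (vacuum regions may hold order-one kinetic energy; smallness there is precisely what the Mellet--Vasseur equi-integrability in $|u_{n}|$ provides, and the correct truncation is in $|u_{n}|$, not in $\varrho_{n}$), while on $\{\varrho_{n}\ge\alpha\}$ an Aubin--Lions argument is not available because that set depends on $n$. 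Having explicitly discarded the Mellet--Vasseur estimate, your sketch therefore cannot pass to the limit in $\sqrt{\varrho_{n}}u_{n}\otimes\sqrt{\varrho_{n}}u_{n}$; reinstating the effective velocity $w$ and the estimate \eqref{MV-est} (as in \cite{AS}, \cite{AS_1}, and as sketched in the Appendix of this paper for the pressureless case) is the missing ingredient.
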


\begin{thm} \label{AS-2}
	Let $d=3$. Let $\nu$, $\kappa$ and $\gamma$ positive such that $\kappa^2<\nu^2<\frac{9}{8}\kappa^2$ and $1<\gamma<3$, assume that (\ref{ic_quantum}) hold. Then, for any $0<T<\infty$ there exists a finite energy weak solution of the system (\ref{cont._1_qt}) - (\ref{mom._1_qt}) on $\left(0,T\right)\times\mathbb{T}^{3}$, that means a solution satisfying (\ref{integr_quantum}) -\eqref{e_q_p},
	with the  continuity equation satisfying the weak formulation (\ref{cont_D_q_pressure}) and the momentum equation satisfying the weak formulation (\ref{wf_mom_q_pressure}).
\end{thm}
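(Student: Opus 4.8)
The statement is a restatement of the three--dimensional existence theorem of Antonelli and Spirito \cite{AS} (see also \cite{AS_1}), so the natural plan is to reproduce their construction. The starting point is a regularized problem: to the momentum equation \eqref{mom._1_qt} one adds drag terms $-r_0 u-r_1\varrho|u|^2u$, a singular cold--pressure term supported near vacuum, and, if needed, an artificial higher--order viscosity $\delta\varrho\nabla\Delta^{2s+1}\varrho$; the resulting non--degenerate system is solved by a Faedo--Galerkin scheme followed by successive vanishing--parameter limits, producing a family of approximate solutions $(\varrho_\delta,u_\delta)$ that are regular enough to justify the formal computations below and to define $\sqrt{\varrho_\delta}\,u_\delta$ classically.

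The core of the argument is the derivation of two families of uniform--in--$\delta$ a priori bounds. Testing the regularized momentum equation with $u_\delta$ yields the energy estimate, hence $\sqrt{\varrho_\delta}\,u_\delta$ bounded in $L^\infty(0,T;L^2)$, $\nabla\sqrt{\varrho_\delta}$ bounded in $L^\infty(0,T;L^2)$, $\varrho_\delta$ bounded in $L^\infty(0,T;L^\gamma)$, and the viscous and drag dissipations bounded. Testing instead with (a truncation of) the effective velocity $w_\delta=u_\delta+\nu\nabla\log\varrho_\delta$ --- i.e.\ exploiting the Bresch--Desjardins entropy structure together with the quantum term --- gives the additional regularity: the Hessian $\nabla^2\sqrt{\varrho_\delta}$ bounded in $L^2((0,T)\times\Omega)$, $\nabla\varrho_\delta^{\gamma/2}$ bounded in $L^2((0,T)\times\Omega)$, and control of $\sqrt{\varrho_\delta}\,\nabla u_\delta$ in $L^2$. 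The precise relation $\kappa^2<\nu^2<\frac{9}{8}\kappa^2$ is exactly what is needed to make the quadratic form that appears when combining the energy and entropy dissipations non--negative, so that this estimate closes in dimension three, while $1<\gamma<3$ keeps the pressure contribution within the admissible integrability range. I expect this step, together with building approximants regular enough to legitimize the entropy computation, to be the main obstacle.

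With these bounds one passes to the limit in the regularizing parameters ($r_0,r_1,\delta\to0$). The Hessian bound gives $\sqrt{\varrho_\delta}$ bounded in $L^2(0,T;H^2)$; combined with a bound on $\partial_t\varrho_\delta$ from \eqref{cont._1_qt} and the Aubin--Lions lemma, one obtains $\sqrt{\varrho_\delta}\to\sqrt\varrho$ strongly, hence $\varrho_\delta\to\varrho$, $\nabla\sqrt{\varrho_\delta}\to\nabla\sqrt\varrho$ and $\varrho_\delta^{\gamma/2}\to\varrho^{\gamma/2}$ strongly in $L^2((0,T)\times\Omega)$; in particular the pressure term converges. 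The delicate point is the strong convergence of the momentum required for the convective term $\sqrt{\varrho_\delta}\,u_\delta\otimes\sqrt{\varrho_\delta}\,u_\delta$: one lets $\Lambda$ be the weak limit of $\sqrt{\varrho_\delta}\,u_\delta$ in $L^2$, defines $u=\Lambda/\sqrt\varrho$ on $\{\varrho>0\}$ and $u=0$ otherwise, and --- using the extra integrability of $u_\delta$ coming from the drag term and a truncation argument in the spirit of Antonelli--Spirito and Vasseur--Yu --- shows $\sqrt{\varrho_\delta}\,u_\delta\to\sqrt\varrho\,u$ strongly in $L^2((0,T)\times\Omega)$.

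Finally one rewrites, after integration by parts against the test function, the quantum term $2\kappa^2\varrho\nabla\!\left(\Delta\sqrt\varrho/\sqrt\varrho\right)$ and the viscous term $2\nu\,\textrm{div}(\varrho D(u))$ exactly in the form appearing on the right--hand side of \eqref{wf_mom_q_pressure}, i.e.\ as linear combinations of $\nabla\sqrt\varrho\otimes\nabla\sqrt\varrho$, $\sqrt\varrho\,\nabla^2\sqrt\varrho$, $\sqrt\varrho\,u\otimes\nabla\sqrt\varrho$ and $\sqrt\varrho\cdot\sqrt\varrho\,u$. Each summand then converges by combining the strong convergence of $\sqrt{\varrho_\delta}$, $\nabla\sqrt{\varrho_\delta}$ and $\sqrt{\varrho_\delta}\,u_\delta$ with the weak convergence of $\nabla^2\sqrt{\varrho_\delta}$. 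Passing to the limit in the continuity equation \eqref{cont_D_q_pressure} and using weak lower semicontinuity of the energy functional yields \eqref{e_q_p}, which completes the proof.
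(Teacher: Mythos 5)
First, a point of reference: the paper does not prove Theorem \ref{AS-2} at all --- it is quoted as a known result of Antonelli and Spirito \cite{AS} (see also \cite{AS_1}), so there is no internal proof to compare with; what you are really reconstructing is the argument of \cite{AS}, the pressureless analogue of which is sketched in the paper's Appendix. Your outline has the right general shape (approximation, energy and Bresch--Desjardins-type bounds, a Mellet--Vasseur-type estimate, compactness of $\sqrt{\varrho}\,u$), but it goes astray exactly at the point where the hypothesis $\kappa^{2}<\nu^{2}<\frac{9}{8}\kappa^{2}$ must be used, and that is the heart of the theorem.

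Concretely: the BD entropy estimate you invoke (testing with $u+\nu\nabla\log\varrho$) does give $\nabla^{2}\sqrt{\varrho}$ and $\nabla\varrho^{\gamma/2}$ in $L^{2}$, but it requires no relation between $\nu$ and $\kappa$; so attributing the restriction $\nu^{2}<\frac98\kappa^{2}$ to ``making the quadratic form combining energy and entropy dissipations non-negative'' is not the actual mechanism, and with it your argument for the strong convergence of $\sqrt{\varrho_\delta}u_\delta$ does not close: the extra integrability of $u_\delta$ coming from the drag terms is not uniform once $r_0,r_1\to0$, so it cannot be what controls the convective term in the limit. In \cite{AS} (and in the paper's Appendix, Lemmas \ref{eq_w}--\ref{MV-der}) the engine is the effective velocity $w=u+\mu\nabla\log\varrho$ with the \emph{specific} constant $\mu=\nu-\sqrt{\nu^{2}-\kappa^{2}}$, $\lambda=\sqrt{\nu^{2}-\kappa^{2}}$: the condition $\nu>\kappa$ makes $\mu$ real and this choice of $\mu$ solves $\mu^{2}-2\nu\mu+\kappa^{2}=0$, which eliminates the Bohm term entirely, leaving the purely viscous system (\ref{cont_w})--(\ref{mom_w}); your choice $w=u+\nu\nabla\log\varrho$ instead produces a capillarity coefficient proportional to $\kappa^{2}-\nu^{2}<0$ and destroys the structure. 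The upper bound $\nu^{2}<\frac98\kappa^{2}$ then enters in the Mellet--Vasseur-type identity for $w$ (Lemma \ref{MV-der}): the cross term $2\lambda\int\varrho\, Dw\,w\cdot\nabla w\,w\,\tfrac{2}{2+|w|^{2}}$ must be absorbed by the dissipation $\mu\int\varrho|\nabla w|^{2}+2\lambda\int\varrho|Dw|^{2}$, which by Young's inequality needs $2\lambda\le\mu$, i.e. $3\sqrt{\nu^{2}-\kappa^{2}}\le\nu$, i.e. $8\nu^{2}\le9\kappa^{2}$; it is this estimate, not the drag terms, that yields the strong $L^{2}$ convergence of $\sqrt{\varrho_n}u_n$ in dimension three. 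Also, in \cite{AS} the approximants are not built from scratch by a Galerkin scheme with drag and cold pressure for the quantum system; they are the Vasseur--Yu solutions with damping \cite{VY-1}, and the proof is a compactness argument as the damping vanishes. Your treatment of $1<\gamma<3$ (integrability of $\varrho^{\gamma}$ from the $H^{1}$ bound on $\sqrt{\varrho}$) and of the final passage to the limit in the weak formulation is fine and consistent with Section \ref{conv_q} and the Appendix.
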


Moreover, recently, the existence of the so-called $\kappa$-entropy solutions was proved for a compressible Navier-Stokes system with degenerate viscosities in the case of singular pressure and drag term (see Bresch et al. \cite{BD-4}). Furthermore, a generalization of the quantum Bohm identity was proposed in \cite{BFPV}, with resulting weak-strong uniqueness analysis for the Navier-Stokes-Korteweg and Euler-Korteweg systems describing quantum fluids (see \cite{BGL}, and \cite{BNV_1}; see also \cite{DEP} for the analysis on the Euler-Korteweg-Poisson system). 

Similar to the analysis on the capillary fluids, our aim is to study the convergence of the weak solutions of the quantum Navier-Stokes system in the limit as $\varepsilon\rightarrow0$, assuming the existence of the weak solutions for the pressureless system. Note that this last assumption is based on the fact that we leave open the problem of the existence of the weak solutions for the pressureless quantum Navier-Stokes system that will be a matter of future research as mentioned in the end of this paper. Moreover, we perform a weak strong-uniqueness analysis for the pressureless system assuming the existence of the strong solution. Weak-strong uniqueness means that a weak and strong solution emanating from the same initial data coincide as long as the latter exists.
This last analysis will follow the recent result of Bresch et al. \cite{BGL} (see also  \cite{BNV_1}) in which an "augmented" version of the quantum Navier-Stokes system combined with a relative energy inequality approach (see for example Feireisl et al. \cite{FSN}, \cite{FJN}) is used. The authors would like to specify that the weak-strong uniqueness analysis was not performed in the capillary case for the following reasons. First, the recent theory of of Bresch et al. \cite{BGL} does not apply to the capillary case, namely for $\kappa(\varrho)=\kappa$. Second, the usual analyzes do not work because of the density dependent viscosity. Indeed, as it will be stressed later in Section \ref{w-s}, the $H^{1}$ bound for the velocity is no longer available and this does not allow to manipulate the viscosity terms in a standard sense through the use of the Korn's inequality (see for example Feireisl et al. \cite{FJN}). 
 
\subsection{Weak-solutions and main results} \label{ws-q}
For $\varepsilon=0$, the system (\ref{cont._1_qt}) - (\ref{mom._1_qt}) reads as
\begin{equation} \label{cont._eps_qt}
\partial_{t}\varrho+\textrm{div}\left(\varrho u\right)=0,
\end{equation}
\begin{equation} \label{mom._eps_qt}
\partial_{t}\left(\varrho u\right)+\textrm{div}\left(\varrho u\otimes u\right)-2\nu\textrm{div}\left(\varrho D\left(u\right)\right)=2\kappa^2\varrho\nabla\left(\frac{\Delta\sqrt{\varrho}}{\sqrt{\varrho}}\right).
\end{equation} 
Now, in the spirit of \cite{AS} we define the weak solutions of the pressureless quantum Navier-Stokes system (\ref{cont._eps_qt}) - (\ref{mom._eps_qt}).
\begin{defn} \label{ws_quantum}
	We say that $(\varrho,u)$ is a weak solution of (\ref{cont._eps_qt}) - (\ref{mom._eps_qt}) on $(0,T)$ if and only if
	\begin{equation} \label{integr}
	\left\{ \begin{array}{c}
	\sqrt{\varrho}\in L^{2}\left(0,T;L^{2}\left(\Omega\right)
	\right),\\
	\sqrt{\varrho}u, \ \nabla \sqrt{\varrho}\in L^{2}\left(0,T;L^{2}\left(\Omega\right)\right);
	\end{array}\right.
	\end{equation}
	the continuity equation satisfies the following weak formulation for all $\varphi\in C_{c}^{\infty}\left(\left[0,T\right];C^{\infty}\left(\Omega\right)\right)
	$ such that $\varphi(T,\cdot)=0$,
	\begin{equation} \label{cont_D_q}
	\int_{\Omega}\varrho_{0}\cdot\varphi\left(0,\cdot\right)dx+\int_{0}^{T}\int_{\Omega}\varrho\cdot\partial_{t}\varphi+\int_{0}^{T}\int_{\Omega}\left(\sqrt{\varrho}\sqrt{\varrho}u\right):\nabla\varphi dxdt=0;
	\end{equation}
	the momentum equation satisfies the following weak formulation for all $\varphi\in C_{c}^{\infty}\left(\left[0,T\right];C^{\infty}\left(\Omega\right)\right)
	$ such that $\varphi(T,\cdot)=0$,
	$$\int_{\Omega}\varrho_{0}u_{0}\cdot\varphi\left(0,\cdot\right)dx+\int_{0}^{T}\int_{\Omega}\left[\sqrt{\varrho}\left(\sqrt{\varrho}u\right)\cdot\partial_{t}\varphi+\sqrt{\varrho} u\otimes\sqrt{\varrho}:\nabla \varphi
	\right.
	$$
	$$
	-2\nu\left(\sqrt{\varrho}u\otimes\nabla\sqrt{\varrho}\right):\nabla\varphi-2\nu\left(\nabla\sqrt{\varrho}\otimes\sqrt{\varrho}u\right):\nabla\varphi
	$$
	$$
	+\nu\sqrt{\varrho}\sqrt{\varrho}u\Delta\varphi+\nu\sqrt{\varrho}\sqrt{\varrho}u\nabla\textrm{div}\varphi
	$$
	\begin{equation} \label{wf_mom_q}
	\left.-4\kappa^{2}\left(\nabla\sqrt{\varrho}\otimes\nabla\sqrt{\varrho}\right):\nabla\varphi+2\kappa^{2}\sqrt{\varrho}\nabla\sqrt{\varrho}\nabla\textrm{div}\varphi\right]dxdt
	=0;
	\end{equation}
    there exists  $\mathcal{S}\in L^2((0,T)\times\Omega)$ such that $\sqrt{\varrho}\mathcal{S}=\mbox{Symm}(\nabla(\varrho u)-2\nabla\sqrt{\varrho}\otimes\sqrt{\varrho} u)$ in $\mathcal{D}'$, satisfying the following energy inequality
    $$
    \sup_{t\in(0,T)}\int_{\Omega}\frac{1}{2}\left(\rho |u|^{2}
    +\kappa\left|\nabla\sqrt{\varrho}\right|^{2}\right)
    +2\nu\int_{0}^T\int_{\Omega}\left|\mathcal{S}\right|^2dxdt
    $$
    \begin{equation} \label{ee_q}
    \leq \int_{\Omega}\frac{1}{2}\left(\varrho_{0}\left|u_{0}\right|^{2}+\kappa\left|\nabla\sqrt{\varrho_{0}}\right|^{2}\right)dx;
    \end{equation}
\end{defn}

\bigskip
Our main result is the following.

\begin{thm} \label{th_conv_q}
	Let $\Omega=\mathbb{T}^{d}$ ($d=2$ or 3) be a periodic domain. Assume that there exists a weak solutions $(\varrho,u)$ of the quantum Navier-Stokes system (\ref{cont._eps_qt}) - (\ref{mom._eps_qt}). Then, as $\varepsilon\rightarrow0$, a global weak solution $(\varrho_\varepsilon,u_\varepsilon)$ of the system (\ref{cont._1_qt}) - (\ref{mom._1_qt}) converges (in a distribution sense) to a weak solution $(\varrho,u)$, in the sense of the Definition \ref{ws_quantum},  of the system (\ref{cont._eps_qt}) - (\ref{mom._eps_qt}). 
\end{thm}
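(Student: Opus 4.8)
The argument follows the scheme of Theorem~\ref{th.1} and of the compactness analysis of Antonelli and Spirito \cite{AS_1}, \cite{AS} behind Theorems~\ref{AS-1}--\ref{AS-2}; the only genuinely new ingredients are the $\varepsilon$-uniformity of the a priori bounds and the vanishing of the pressure term. First I would collect the estimates. For fixed $\varepsilon>0$, mass conservation and the energy inequality \eqref{e_q_p} give
\[
\|\varrho_{\varepsilon}\|_{L^{\infty}(0,T;L^{1})}+\|\sqrt{\varrho_{\varepsilon}}u_{\varepsilon}\|_{L^{\infty}(0,T;L^{2})}+\|\nabla\sqrt{\varrho_{\varepsilon}}\|_{L^{\infty}(0,T;L^{2})}+\|\varepsilon^{1/\gamma}\varrho_{\varepsilon}\|_{L^{\infty}(0,T;L^{\gamma})}\le C,
\]
while the construction in \cite{AS_1}, \cite{AS} supplies the further $\varepsilon$-independent estimates (the relevant Bresch--Desjardins-type entropy, giving $\sqrt{\varrho_{\varepsilon}}$ bounded in $L^{2}(0,T;H^{2})$ and $\mathcal{S}_{\varepsilon}$ bounded in $L^{2}((0,T)\times\Omega)$); here the pressure enters only through a sign-definite nonnegative contribution, of the same type as the pressure term in the capillary entropy \eqref{ineq.}, so it can simply be dropped. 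Since $\sqrt{\varrho_{\varepsilon}}$ is then bounded in $L^{\infty}(0,T;H^{1})$, it is bounded in $L^{\infty}(0,T;L^{6})$ when $d=3$ and in $L^{\infty}(0,T;L^{p})$ for every $p<\infty$ when $d=2$; hence $\varrho_{\varepsilon}$ is bounded, uniformly in $\varepsilon$, in $L^{\infty}(0,T;L^{3})$ (resp. in all $L^{\infty}(0,T;L^{p})$), and $\nabla\varrho_{\varepsilon}=2\sqrt{\varrho_{\varepsilon}}\nabla\sqrt{\varrho_{\varepsilon}}$ is bounded in $L^{\infty}(0,T;L^{3/2})$.

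\textbf{Compactness.} From the continuity equation $\partial_{t}\varrho_{\varepsilon}=-\textrm{div}(\sqrt{\varrho_{\varepsilon}}\,\sqrt{\varrho_{\varepsilon}}u_{\varepsilon})$ one gets $\partial_{t}\varrho_{\varepsilon}$ bounded in $L^{\infty}(0,T;W^{-1,3/2})$; by Aubin--Lions and interpolation with the $L^{2}H^{2}$ bound one obtains, exactly as for \eqref{reg_dens}, $\varrho_{\varepsilon}\to\varrho$ in $L^{2/s}(0,T;H^{1+s})\cap C([0,T];H^{s})$ for all $s\in(0,1)$, and in particular $\varrho_{\varepsilon}\to\varrho$ a.e., $\sqrt{\varrho_{\varepsilon}}\to\sqrt{\varrho}$ strongly in $C([0,T];L^{q})$, and $\nabla\sqrt{\varrho_{\varepsilon}}\to\nabla\sqrt{\varrho}$ strongly in $L^{2}((0,T)\times\Omega)$ (the latter being precisely what the quadratic quantum term $\nabla\sqrt{\varrho_{\varepsilon}}\otimes\nabla\sqrt{\varrho_{\varepsilon}}$ demands). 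Moreover $\sqrt{\varrho_{\varepsilon}}u_{\varepsilon}\rightharpoonup\Lambda$ weakly in $L^{2}$, one sets $u=\Lambda/\sqrt{\varrho}$ on $\{\varrho>0\}$ and $u=0$ elsewhere, so that $\varrho_{\varepsilon}u_{\varepsilon}=\sqrt{\varrho_{\varepsilon}}\,\sqrt{\varrho_{\varepsilon}}u_{\varepsilon}\rightharpoonup\sqrt{\varrho}\Lambda=:\varrho u$; reproducing for the sequence $(\varrho_{\varepsilon},u_{\varepsilon})$ the argument of \cite{AS_1}, \cite{AS}, which only uses the $\varepsilon$-uniform bounds above, one upgrades this to the strong convergence of $\sqrt{\varrho_{\varepsilon}}u_{\varepsilon}$ in $L^{2}((0,T)\times\Omega)$ needed to pass to the limit in the convective term $\sqrt{\varrho_{\varepsilon}}u_{\varepsilon}\otimes\sqrt{\varrho_{\varepsilon}}u_{\varepsilon}$ and in the defining relation for $\mathcal{S}_{\varepsilon}$.

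\textbf{Vanishing of the pressure and passage to the limit.} Since $\gamma<3$ when $d=3$ (and $\gamma$ is arbitrary when $d=2$, where $\varrho_{\varepsilon}$ is bounded in every $L^{\infty}L^{p}$), Step~1 gives $\varrho_{\varepsilon}^{\gamma}$ bounded in $L^{\infty}(0,T;L^{1})$ uniformly in $\varepsilon$, hence for every test function $\varphi$
\[
\Big|\int_{0}^{T}\!\!\int_{\Omega}\varepsilon\varrho_{\varepsilon}^{\gamma}\,\textrm{div}\varphi\,dx\,dt\Big|\le\varepsilon\,\|\textrm{div}\varphi\|_{L^{\infty}}\!\int_{0}^{T}\!\|\varrho_{\varepsilon}\|_{L^{\gamma}}^{\gamma}\,dt\le C\varepsilon\longrightarrow0 ,
\]
which is the only $\varepsilon$-dependent term besides the one analogous to \eqref{dt_rhou}, where $\varepsilon\varrho_{\varepsilon}^{\gamma}$ is controlled in the same way. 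With all the convergences above, one passes to the limit in the weak formulations \eqref{cont_D_q_pressure}--\eqref{wf_mom_q_pressure}: the terms linear in $(\sqrt{\varrho_{\varepsilon}},\nabla\sqrt{\varrho_{\varepsilon}},\sqrt{\varrho_{\varepsilon}}u_{\varepsilon})$ by the corresponding strong/weak convergences, the quantum term by the strong $L^{2}$ convergence of $\nabla\sqrt{\varrho_{\varepsilon}}$, the convective and viscous terms by the strong $L^{2}$ convergence of $\sqrt{\varrho_{\varepsilon}}u_{\varepsilon}$ together with that of $\sqrt{\varrho_{\varepsilon}}$ and $\nabla\sqrt{\varrho_{\varepsilon}}$, and the pressure term by the display above; this yields \eqref{cont_D_q}--\eqref{wf_mom_q}. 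Finally, discarding the nonnegative pressure contribution in \eqref{e_q_p} and using weak lower semicontinuity of the convex functionals gives \eqref{ee_q}, so the limit $(\varrho,u)$ is a weak solution of \eqref{cont._eps_qt}--\eqref{mom._eps_qt} in the sense of Definition~\ref{ws_quantum}, which proves Theorem~\ref{th_conv_q}.

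\textbf{Main obstacle.} The delicate point is the strong $L^{2}$ compactness of $\sqrt{\varrho_{\varepsilon}}u_{\varepsilon}$ and of $\nabla\sqrt{\varrho_{\varepsilon}}$ — exactly the hard core of the Antonelli--Spirito existence theory for quantum Navier--Stokes — and what has to be verified is that it is driven entirely by bounds that are uniform in $\varepsilon$, the pressure contributing at most a harmless nonnegative term to the energy and to the entropy. Everything else, in particular the disappearance of the pressure, is essentially routine and parallels the capillary case of Theorem~\ref{th.1}.
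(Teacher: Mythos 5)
Your proposal is correct and follows essentially the same route as the paper: it relies on the $\varepsilon$-uniform Antonelli--Spirito compactness (reported in the paper's Appendix) to get $\sqrt{\varrho_{\varepsilon}}\to\sqrt{\varrho}$ in $L^{2}(0,T;H^{1})$ and $\sqrt{\varrho_{\varepsilon}}u_{\varepsilon}\to\sqrt{\varrho}u$ in $L^{2}$, and then kills the pressure term using the uniform $L^{\infty}(0,T;L^{3})$ bound on $\varrho_{\varepsilon}$ coming from the $H^{1}$ bound on $\sqrt{\varrho_{\varepsilon}}$ together with $\gamma<3$. The only cosmetic difference is that you bound $\varrho_{\varepsilon}^{\gamma}$ in $L^{\infty}(0,T;L^{1})$ directly by embedding, whereas the paper uses the equivalent $L^{3}$--$L^{1}$ interpolation inequality.
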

  
\subsection{High compressible limit} \label{conv_q}
In this section we show the convergence (in a distribution sense) of a weak solution $(\varrho_\varepsilon,u_\varepsilon)$ of the system (\ref{cont._1_qt}) - (\ref{mom._1_qt}) to a weak solution $(\varrho,u)$ of the pressureless system (\ref{cont._eps_qt}) - (\ref{mom._eps_qt}), as $\varepsilon\rightarrow0$.
For fixed  $\varepsilon > 0$,  we have the existence of weak solutions $(\varrho_\varepsilon,u_\varepsilon)$ satisfying (\ref{integr_quantum}) and the energy inequality (\ref{e_q_p}). Thanks to the stability analysis provided by Antonelli and Spirito in \cite{AS_1} it is possible to prove the following convergences
\begin{equation} \label{conv-q-1}
\sqrt{\varrho_{\varepsilon}}\rightarrow\sqrt{\varrho}\mbox{ in }L^{2}\left(0,T;H^{1}\left(\Omega\right)\right),
\end{equation}
\begin{equation} \label{conv-q-3}
\sqrt{\varrho_{\varepsilon}}u_{\varepsilon}\rightarrow\sqrt{\varrho}u\mbox{ in }L^{2}\left(0,T;L^{2}\left(\Omega\right)\right).
\end{equation}
In order to justify the convergences (\ref{conv-q-1}) and (\ref{conv-q-3}), we report the compactness analysis of Antonelli and Spirito \cite{AS_1} in the Appendix. Potentially, this analysis is required to prove the existence of the weak solutions for the pressureless quantum Navier-Stokes system. However, as mentioned before and in the conclusions, for the time being this is beyond the scope of the present work. 

Consequently, in order to prove Theorem \ref{th_conv_q}, the only term in the weak formulation (\ref{wf_mom_q_pressure}) that requires some attention is the pressure contribution. Namely, we need to show that
\begin{equation} \label{convergence-1}
\int_{0}^{T}\int_{\Omega}\varepsilon\varrho^{\gamma}_\varepsilon\textrm{div}\varphi dxdt\rightarrow0, \ \ \textrm{as} \ \ \varepsilon\rightarrow0.
\end{equation}
This means that we need to find a uniform bound, independent by $\varepsilon$, of the quantity $\varrho^{\gamma}_\varepsilon$ and take the $\varepsilon$-limit in the weak formulation (\ref{wf_mom_q_pressure}). 

\subsubsection{Proof of Theorem \ref{th_conv_q}}

The convergence (\ref{convergence-1}) can be proved thanks to the regularity properties of the density. 
%
Indeed, thanks to the following interpolation inequality 
$$
\left\Vert \varrho_\varepsilon\right\Vert _{L^{\gamma}}\leq\left\Vert \varrho_\varepsilon\right\Vert _{L^{3}}^{\alpha}\left\Vert \varrho_\varepsilon\right\Vert _{L^{1}}^{1-\alpha},
$$
with $\alpha=\frac{3}{2}(\gamma-1)/\gamma$ and $1,\gamma<3$, we have
$$
\int_{0}^{T}\left\Vert \varrho_\varepsilon\right\Vert _{L^{\gamma}}^{\gamma}dt\leq C.
$$ Here, the $L^{\infty}\left(0,T;L^{3}\right)$ bound for $\varrho_\varepsilon$ is obtained from the $H^1$ bound for $\sqrt{\varrho_\varepsilon}$.  
%
%
As a consequence we have (\ref{convergence-1}) and this ends the proof of Theorem \ref{th_conv_q}.

\subsection{Weak-strong uniqueness in the pressureless case} \label{w-s}
In this section, we would like to perform the weak-strong uniqueness analysis. As mentioned before, the analysis requires to introduce an "augmented" version of the pressureless quantum Navier-Stokes system. The reason for that stands in the fact that a $H^1$ bound for the velocity is no longer available because of the density dependent viscosity. Consequently, standard application of the Korn's inequality in the weak-strong uniqueness context is not possible (see for example Feireisl et al. \cite{FJN}). Moreover, the presence of the quantum Bohm potential adds some difficulties that seem not possible to overcome by the methodology proposed in Bresch et al.  \cite{BNV_1}. 

\subsubsection{"Augmented" version of the quantum Navier-Stokes system} \label{augm_qns}
In order to derive an augmented version of the quantum Navier-Stokes system, we multiply the continuity equation by $\mu^{\prime}\left(\varrho\right)$ and we write the corresponding equation for $\mu\left(\varrho\right)$, namely
\begin{equation}\label{mu_eq}
\partial_{t}\mu\left(\varrho\right)+\textrm{div}_{x}\left(\mu\left(\varrho\right)u\right)=0.
\end{equation}
The following computation keeps $\mu\left(\varrho\right)$ as a general function of the density. Later, we will consider our case in which $\mu\left(\varrho\right)=\nu\varrho$. We differentiate respect to space, and observing $\nabla\mu\left(\varrho\right)=\varrho\nabla\phi\left(\varrho\right)$, we obtain
\begin{equation}\label{phi_eq}
\partial_{t}\left(\varrho\nabla\phi\left(\varrho\right)\right)+\textrm{div}_{x}\left(\varrho u\otimes\nabla\phi\left(\varrho\right)\right)+\textrm{div}_{x}\left(\mu\left(\varrho\right)\nabla^t u\right)=0.
\end{equation}
Now, we introduce the following vector field (see for example Bresch et al. \cite{BD-4})
\begin{equation} \label{w}
w=u+\nabla\phi\left(\varrho\right),\qquad \textrm{div}_{x}w=0.
\end{equation}
Using the momentum equation \eqref{mom._eps_qt}
we have
\begin{equation} \label{mom_ag}
\partial_{t}\left(\varrho w\right)+\textrm{div}_{x}\left(\varrho w\otimes u\right)-2\textrm{div}_{x}\left(\mu\left(\varrho\right)D\left(u\right)\right)+\textrm{div}_{x}\left(\mu\left(\varrho\right)\nabla^t u\right)=2\kappa^{2}\varrho\nabla\left(\frac{\Delta\sqrt{\varrho}}{\sqrt{\varrho}}\right).
\end{equation}
Consequently, we have
\begin{equation} \label{mom_ag_1}
\partial_{t}\left(\varrho w\right)+\textrm{div}_{x}\left(\varrho w\otimes u\right)-\textrm{div}_{x}\left(\mu\left(\varrho\right)D\left(u\right)\right)-\textrm{div}_{x}\left(\mu\left(\varrho\right)A\left(u\right)\right)=2\kappa^{2}\varrho\nabla\left(\frac{\Delta\sqrt{\varrho}}{\sqrt{\varrho}}\right),
\end{equation}
with $A(u)=(\nabla u-\nabla^{t}u)/2$, that can be rewritten as follows
$$
\partial_{t}\left(\varrho w\right)+\textrm{div}_{x}\left(\varrho w\otimes u\right)-\textrm{div}_{x}\left(\mu\left(\varrho\right)D\left(w\right)\right)-\textrm{div}_{x}\left(\mu\left(\varrho\right)A\left(w\right)\right)
$$
\begin{equation} \label{mom_ag_3}
+\textrm{div}_{x}\left(\mu\left(\varrho\right)\nabla^{2}\phi\left(\varrho\right)\right)=2\kappa^{2}\varrho\nabla\left(\frac{\Delta\sqrt{\varrho}}{\sqrt{\varrho}}\right).
\end{equation}
Now, in our case $\mu\left(\varrho\right)=\nu\varrho$. Observing that
$$
\nabla\phi\left(\varrho\right)=\phi^{\prime}\left(\varrho\right)\nabla\varrho=\frac{\mu^{\prime}\left(\varrho\right)}{\varrho}\nabla\varrho=\nu\nabla\log\varrho,
$$
$$
2\kappa^{2}\varrho\nabla\left(\frac{\Delta\sqrt{\varrho}}{\sqrt{\varrho}}\right)=\kappa^{2}\textrm{div}_{x}\left(\varrho\nabla\nabla\log\varrho\right)=\frac{\kappa^{2}}{\nu}\textrm{div}_{x}\left(\varrho\nabla v\right),
$$
we can rewrite (\ref{mom_ag_3}) as
\begin{equation} \label{mom_ag_4}
\partial_{t}\left(\varrho w\right)+\textrm{div}_{x}\left(\varrho w\otimes u\right)-\nu\textrm{div}_{x}\left(\varrho\nabla w\right)+\left(\nu-\frac{\kappa^{2}}{\nu}\right)\textrm{div}_{x}\left(\varrho\nabla v\right)=0,
\end{equation}
where $v=\nu\nabla\log\varrho$.
The relation above suggests to write an equation for $v$. Then, from relation (\ref{phi_eq}), we have
\begin{equation} \label{v}
\partial_{t}\left(\varrho v\right)+\textrm{div}_{x}\left(\varrho v\otimes u\right)+\nu\textrm{div}_{x}\left(\varrho\nabla u\right)=0.
\end{equation}
Consequently, the so-called "augmented" version of the pressureless quantum Navier-Stokes system reads
\begin{equation}\label{ag_1}
\partial_{t}\varrho+\textrm{div}_{x}\left(\varrho u\right)=0,
\end{equation}
\begin{equation}\label{ag_2}
\partial_{t}\left(\varrho w\right)+\textrm{div}_{x}\left(\varrho w\otimes u\right)-\nu\textrm{div}_{x}\left(\varrho\nabla w\right)-\left(\frac{\kappa^{2}}{\nu}-\nu\right)\textrm{div}_{x}\left(\varrho\nabla v\right)=0,
\end{equation}
\begin{equation}\label{ag_3}
\partial_{t}\left(\varrho v\right)+\textrm{div}_{x}\left(\varrho v\otimes u\right)+\nu\textrm{div}_{x}\left(\varrho\nabla u\right)=0.
\end{equation}
Now, using the definition of $v$, we rewrite the above system as follows,
\begin{equation}\label{ag1}
	\partial_{t}\varrho+\textrm{div}_{x}\left(\varrho u\right)=0,
\end{equation}
\begin{equation}\label{ag2}
	\partial_{t}\left(\varrho w\right)+\textrm{div}_{x}\left(\varrho w\otimes u\right)-\nu\textrm{div}_{x}\left(\varrho\nabla w\right)-\left(\kappa^{2}-\nu^{2}\right)\textrm{div}_{x}\left(\varrho\nabla v\right)=0,
\end{equation}
\begin{equation}\label{ag3}
	\partial_{t}\left(\varrho v\right)+\textrm{div}_{x}\left(\varrho v\otimes u\right)+\textrm{div}_{x}\left(\varrho\nabla u\right)=0.
\end{equation}

\begin{rem} \label{v_rem}
	Note that we passed from $v=\nu\nabla\log\varrho$ to $v=\nabla\log\varrho$ using the same notation, and we simplified the viscosity $\nu$ in the last equation.
\end{rem}


Now, the idea is to pass at the new unknown $\overline{v}=v\sqrt{\kappa^{2}-\nu^{2}}$, with $\kappa>\nu$, and to define a global weak solutions of the system (\ref{ag1}) - (\ref{ag3}) as follows (see Bresch et al. \cite{BGL})
\begin{defn} \label{ws_augm}
	We say that $(\varrho,\overline{v}, w)$ is a weak solution of the "augmented" system (\ref{ag1}) - (\ref{ag3}) if satisfies the following system in the distribution sense
	\begin{equation} \label{as_cont}
	\partial_{t}\varrho+\textrm{div}\left(\varrho u\right)=0,
	\end{equation}
	\begin{equation} \label{as_mom_w}
	\partial_{t}\left(\varrho w\right)+\textrm{div}\left(\varrho w\otimes u\right)-\nu\textrm{div}\left(\sqrt{\varrho}\mathbb{T}\left(w\right)\right)-\sqrt{\kappa^{2}-\nu^{2}}\textrm{div}\left(\sqrt{\varrho}\mathbb{T}\left(\overline{v}\right)\right)=0,
	\end{equation}
	\begin{equation} \label{as_mom_v}
	\partial_{t}\left(\varrho\overline{v}\right)+\textrm{div}\left(\varrho\overline{v}\otimes u\right)-\textrm{div}\left(\sqrt{\varrho}\mathbb{T}\left(\overline{v}\right)\right)+\sqrt{\kappa^{2}-\nu^{2}}\textrm{div}\left(\sqrt{\varrho}\mathbb{T}\left(w\right)\right)=0,
	\end{equation}
	with
	\begin{equation} \label{T}
	\sqrt{\varrho}\mathbb{T}\left(\vartheta\right)=\nabla\left(\varrho\vartheta\right)-\frac{1}{\sqrt{\kappa^{2}-\nu^{2}}}\varrho\vartheta\otimes\overline{v}, \ \ (\vartheta=w,\overline{v}),
	\end{equation}
	and endowed with the following energy inequality
	\begin{equation} \label{as_ei}
	\frac{d}{dt}\frac{1}{2}\int_{\Omega}\varrho\left|w\right|^{2}+\varrho\left|\overline{v}\right|^{2}dx+\nu\int_{\Omega}\left|\mathbb{T}\left(w\right)\right|^{2}+\left|\mathbb{T}\left(\overline{v}\right)\right|^{2}dx\leq0.
	\end{equation}
\end{defn}

\begin{rem} \label{weak_consist}
	We would like to stress that a global weak solution of the pressureless quantum Navier-Stokes system is also a global weak solution of the augmented version. Indeed, as remarked by Bresch et al. \cite{BGL}, the following equation, for example, is satisfied in the distribution sense
	\begin{equation} \label{cont_mu}
	\nu\left[\partial_{t}\mu\left(\varrho\right)+\textrm{div}\left(\mu\left(\varrho\right)u\right)\right]=0
	\end{equation}  
	with $u=w-\nu\overline{v}\sqrt{\kappa^{2}-\nu^{2}}$. Differentiating (\ref{cont_mu}), we have
	\[
	\nu\left[\partial_{t}\nabla\mu\left(\varrho\right)+\textrm{div}\left(\nabla\left(\mu\left(\varrho\right)u\right)\right)\right]=0.
	\]
	Consequently, by the definition of $v$ and $\sqrt{\varrho}\mathbb{T}\left(u \right)$, we can write
	\[
	\nu\left[\partial_{t}\left(\varrho v\right)+\textrm{div}\left(\varrho v\otimes u\right)+\textrm{div}\left(\sqrt{\varrho}\mathbb{T}\left(u\right)\right)\right]=0,
	\]
	that explains the assertion above.
\end{rem}

Now, we are ready to state our weak strong uniqueness result.

\begin{thm} \label{th_ws_q}
	Let $\Omega=\mathbb{T}^{d}$ ($d=2$ or 3) be a periodic domain. Let us consider $(\varrho,u)$ be a weak solutions to the pressureless quantum Navier-Stokes system, and $(r,U)$ be the corresponding strong solution emanating from the same initial data. Then, $(u, \overline{v}, w)=(U, \overline{V}, W)$ or $(\varrho,u)=(r,U)$ in $(0,T)\times\Omega$, which corresponds to a weak-strong uniqueness property.
\end{thm}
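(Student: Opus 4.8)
The plan is to prove this weak-strong uniqueness statement by a relative energy argument carried out on the ``augmented'' system (\ref{ag1})--(\ref{ag3}). By Remark \ref{weak_consist}, the weak solution $(\varrho,u)$ of the pressureless quantum Navier-Stokes system induces a weak solution $(\varrho,w,\overline{v})$ of (\ref{as_cont})--(\ref{as_mom_v}) in the sense of Definition \ref{ws_augm}, while the strong solution $(r,U)$ induces a smooth triple $(r,W,\overline{V})$ with $r$ bounded from above and from below away from zero and with $W,\overline{V}$ Lipschitz in space; since the data coincide, so do the initial augmented variables. I would then introduce the relative energy functional
\[
\mathcal{E}(t)=\frac{1}{2}\int_{\Omega}\varrho\,|w-W|^{2}+\varrho\,|\overline{v}-\overline{V}|^{2}\,dx,
\]
which satisfies $\mathcal{E}(0)=0$, and the goal is to show $\mathcal{E}\equiv0$ on $(0,T)$.

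First I would derive a relative energy inequality. The ingredients are: the energy inequality (\ref{as_ei}) for $(\varrho,w,\overline{v})$; the weak formulations of (\ref{as_mom_w}) and (\ref{as_mom_v}) tested against $W$ and $\overline{V}$ respectively, which are admissible test functions thanks to the regularity of the strong solution; the exact energy balance satisfied by $(r,W,\overline{V})$; and the continuity equation (\ref{as_cont}) together with the smooth transport and diffusion equations solved by $W$ and $\overline{V}$. Assembling these in the standard way gives an inequality of the form
\[
\mathcal{E}(t)+\mathcal{D}(t)\le\int_{0}^{t}\mathcal{R}(s)\,ds,
\]
where $\mathcal{D}(t)\ge0$ is a dissipation built out of $\mathbb{T}(w)-\mathbb{T}(W)$ and $\mathbb{T}(\overline{v})-\mathbb{T}(\overline{V})$ (with $\mathbb{T}$ as in (\ref{T})), and $\mathcal{R}$ collects the convective and viscous remainders. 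The decisive structural feature is that the coupling between the equations for $w$ and $\overline{v}$ carrying the factor $\sqrt{\kappa^{2}-\nu^{2}}$ is skew-symmetric, so that the corresponding cross contributions cancel exactly when the two relative energy balances are summed, and --- crucially --- no third-order (quantum Bohm) term survives this cancellation. This antisymmetric structure is precisely what substitutes for the missing $H^{1}$ control on the velocity: all dissipation is recorded through $\mathbb{T}(w)$ and $\mathbb{T}(\overline{v})$ and never through $\nabla u$.

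Once the cancellation has been carried out, every surviving term in $\mathcal{R}$ is quadratic in the differences $w-W$ and $\overline{v}-\overline{V}$, weighted by $\varrho$ and by a smooth coefficient depending only on $r$, $1/r$, $\nabla W$, $\nabla\overline{V}$ and $\nabla^{2}\phi(r)$, possibly multiplied by one factor of $\mathbb{T}(w)-\mathbb{T}(W)$ or $\mathbb{T}(\overline{v})-\mathbb{T}(\overline{V})$. The latter factors are absorbed into $\mathcal{D}(t)$ by Young's inequality, and the former are bounded by $C\bigl(\|r\|_{L^{\infty}},\|1/r\|_{L^{\infty}},\|\nabla W\|_{L^{\infty}},\|\nabla\overline{V}\|_{L^{\infty}}\bigr)\,\mathcal{E}(s)$. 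Gr\"onwall's lemma then forces $\mathcal{E}\equiv0$ on $(0,T)$, hence $w=W$ and $\overline{v}=\overline{V}$ on $\{\varrho>0\}$; recovering $u$ from $w$ and $\overline{v}$ through the algebraic identity of Remark \ref{weak_consist} gives $u=U$ there, and uniqueness for the linear transport equation (\ref{as_cont}) (equivalently for (\ref{cont_mu})) driven by this identified velocity yields $\varrho=r$, which is the claimed conclusion $(\varrho,u)=(r,U)$.

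The step I expect to be the main obstacle is the treatment of the \emph{nonlinear} diffusion operator $\mathbb{T}$: by (\ref{T}), $\sqrt{\varrho}\,\mathbb{T}(\vartheta)=\nabla(\varrho\vartheta)-\frac{1}{\sqrt{\kappa^{2}-\nu^{2}}}\,\varrho\vartheta\otimes\overline{v}$ depends on the unknown $\overline{v}$ itself, so the viscous part of the relative energy is not a plain quadratic form in the differences. One must split $\mathbb{T}(w)-\mathbb{T}(W)$ and $\mathbb{T}(\overline{v})-\mathbb{T}(\overline{V})$ into a genuine dissipative square plus error terms of the type $(\overline{v}-\overline{V})\otimes(\text{bounded quantity})$, and control all such low-regularity products only through $\sqrt{\varrho}$ weights, exactly in the spirit of the stability analysis underlying Definition \ref{ws_augm} and of Bresch et al. \cite{BGL}. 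A more routine technical point is to check that testing the augmented momentum equations against the strong solution is compatible with the one-sided bound (\ref{as_ei}), which amounts to verifying that $W$ and $\overline{V}$ belong to the admissible class of test functions.
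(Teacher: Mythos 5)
Your proposal is correct and follows essentially the same route as the paper: a relative energy argument for the augmented system in the spirit of Bresch et al. \cite{BGL}, exploiting the skew $\sqrt{\kappa^{2}-\nu^{2}}$ coupling so that no third-order Bohm term survives and all remainders are quadratic in $w-W$ and $\overline{v}-\overline{V}$, followed by Gr\"onwall and uniqueness for the continuity equation with the identified smooth velocity to recover $\varrho=r$. The only minor differences are cosmetic: the paper puts the viscous dissipation directly inside its relative energy functional (\ref{rel_en_as}), and the cross terms do not cancel identically but are collected into a contribution ($I_{2}$, via the identities (\ref{I_1})--(\ref{I_2})) that is quadratic in the differences and absorbed exactly as you anticipate.
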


\subsubsection{Proof of Theorem \ref{th_ws_q}}
As in Bresch et al. \cite{BGL}, we define a relative energy functional for the system (\ref{as_cont}) - (\ref{as_mom_v}) by the following relation 
\[
\mathcal{E}\left(\overline{v},w|\overline{V},W\right)=\frac{1}{2}\int_{\Omega}\varrho\left(\left|w-W\right|^{2}+\left|\overline{v}-\overline{V}\right|^{2}\right)dx
\]
\begin{equation} \label{rel_en_as}
+\nu\int_{0}^{T}\int_{\Omega}\varrho\left(\left|\frac{\mathbb{T}\left(\overline{v}\right)}{\sqrt{\varrho}}-\nabla\overline{V}\right|^{2}+\left|\frac{\mathbb{T}\left(w\right)}{\sqrt{\varrho}}-\nabla W\right|^{2}\right)dxdt.
\end{equation}
Note that, a viscous part is present in comparison with the usual definition of the relative energy functional (see for example Feireisl et al. \cite{FJN}). As shown below this, together with the augmented system and the respective system for strong solutions, will give us the possibility to treat the viscous terms during the weak-strong uniqueness analysis.

Now, the following Proposition, proving a relative energy inequality, holds.
\begin{prop} \label{REI}
	Let
	\[
	r\in C^{1}\left(\left[0,T\right]\times\overline{\Omega}\right), \ \ r>0, \ \ \overline{V}, \ W\in C^{2}\left(\left[0,T\right]\times\overline{\Omega}\right).
	\]
	Then, any global weak solutions $\left(\varrho,\overline{v},w\right)$ of the system (\ref{as_cont}) - (\ref{as_mom_v}) satisfies the following relative energy inequality
	\[
	\mathcal{E}\left(t\right)\leq\mathcal{E}\left(0\right)
	\]
	\[
	+\int_{0}^{T}\int_{\Omega}\varrho\left(\partial_{t}\overline{V}\cdot\left(\overline{V}-\overline{v}\right)+\left(\nabla\overline{V}u\right)\cdot\left(\overline{V}-\overline{v}\right)\right)dxdt
	\]
	\[
	+\int_{0}^{T}\int_{\Omega}\varrho\left(\partial_{t}W\cdot\left(W-w\right)+\left(\nabla Wu\right)\cdot\left(W-w\right)\right)dxdt
	\]
	\[
	+\nu\int_{0}^{T}\int_{\Omega}\varrho\left(\left|\nabla\overline{V}\right|^{2}+\left|\nabla W\right|^{2}\right)-\sqrt{\varrho}\left(\mathbb{T}\left(\overline{v}\right):\nabla\overline{V}+\mathbb{T}\left(w\right):\nabla W\right)dxdt
	\]
	\begin{equation} \label{rei_as}
	+\sqrt{\kappa^{2}-\nu^{2}}\int_{0}^{T}\int_{\Omega}\sqrt{\varrho}\left(\mathbb{T}\left(\overline{v}\right):\nabla W-\mathbb{T}\left(w\right):\nabla\overline{V}\right)dxdt.
	\end{equation}
\end{prop}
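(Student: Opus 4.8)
The plan is to run the standard relative energy (relative entropy) method in the augmented variables, as in Feireisl et al. \cite{FJN} and Bresch et al. \cite{BGL}: I keep $(\overline V,W)$ as arbitrary smooth fields for the moment, derive \eqref{rei_as} by comparing the weak solution of \eqref{as_cont}--\eqref{as_mom_v} against them, and only afterwards (in the weak--strong uniqueness step, Theorem \ref{th_ws_q}) specialise $(\overline V,W)$ to the strong solution built from $(r,U)$. So here $r,\overline V,W$ enter only through their regularity.

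First I would integrate the energy inequality \eqref{as_ei} over $(0,t)$ to get
\[
\tfrac12\int_\Omega\varrho\bigl(|w|^2+|\overline v|^2\bigr)(t)\,dx+\nu\int_0^t\!\!\int_\Omega\bigl(|\mathbb T(w)|^2+|\mathbb T(\overline v)|^2\bigr)\,dx\,ds\le\tfrac12\int_\Omega\varrho_0\bigl(|w_0|^2+|\overline v_0|^2\bigr)\,dx,
\]
and then expand the squares in the definition \eqref{rel_en_as} of $\mathcal E(t)$. This splits $\mathcal E(t)$ into: the left-hand side of the inequality above, which is bounded by $\tfrac12\int_\Omega\varrho_0(|w_0|^2+|\overline v_0|^2)$; the ``mixed'' terms $-\int_\Omega\varrho(w\cdot W+\overline v\cdot\overline V)(t)$; the ``strong'' terms $\tfrac12\int_\Omega\varrho(|W|^2+|\overline V|^2)(t)$; and, crucially, the viscous cross terms $-2\nu\int_0^t\!\!\int_\Omega\sqrt\varrho(\mathbb T(\overline v)\!:\!\nabla\overline V+\mathbb T(w)\!:\!\nabla W)$ together with $\nu\int_0^t\!\!\int_\Omega\varrho(|\nabla\overline V|^2+|\nabla W|^2)$, which come from expanding $|\mathbb T(\cdot)/\sqrt\varrho-\nabla(\cdot)|^2$ — this is exactly why the (nonstandard) viscous part was put into $\mathcal E$.

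Next I would represent the mixed and strong terms through the augmented system. For $\tfrac12\int_\Omega\varrho|W|^2(t)$ and $\tfrac12\int_\Omega\varrho|\overline V|^2(t)$ I use the weak formulation of the continuity equation \eqref{as_cont} with test functions $\tfrac12|W|^2$ and $\tfrac12|\overline V|^2$. For $\int_\Omega\varrho\,w\!\cdot\!W(t)$ and $\int_\Omega\varrho\,\overline v\!\cdot\!\overline V(t)$ I test the momentum equations \eqref{as_mom_w} and \eqref{as_mom_v} against $W$ and $\overline V$ respectively — admissible since $W,\overline V\in C^2$ — integrating by parts both the convective term $\mathrm{div}(\varrho\,\cdot\,\otimes u)$ and the terms $\mathrm{div}(\sqrt\varrho\,\mathbb T(\cdot))$. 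Substituting the resulting identities into the expansion: the three families of initial-data integrals recombine into $\mathcal E(0)$ (whose viscous part, a time integral over $(0,0)$, vanishes); the $\partial_t$ and transport contributions organise into the first two integral groups on the right of \eqref{rei_as}, written with the differences $W-w$ and $\overline V-\overline v$; and all the $\sqrt\varrho\,\mathbb T\!:\!\nabla$ terms produced by the integrations by parts, combined with the two viscous cross-term families from the square expansion, collapse precisely into the last two groups of \eqref{rei_as} — the symmetric $\nu$-term $\nu\int\varrho(|\nabla\overline V|^2+|\nabla W|^2)-\sqrt\varrho(\mathbb T(\overline v)\!:\!\nabla\overline V+\mathbb T(w)\!:\!\nabla W)$ and the skew coupling with coefficient $\sqrt{\kappa^2-\nu^2}$. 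That yields the stated inequality.

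The main obstacle is not the algebra but making all of this rigorous at the regularity level of Definition \ref{ws_augm}: one has only the energy \emph{inequality} \eqref{as_ei}, so signs must be tracked throughout; the test functions $W,\overline V,\tfrac12|W|^2,\tfrac12|\overline V|^2$ must be handled via a time-mollification and density argument; and, above all, the reconstructed velocity $u$ (obtained from $w$ and $\overline v$) carries no $H^1$ bound, so every pairing in the convective and viscous terms has to be read through the operators $\mathbb T(w),\mathbb T(\overline v)$ and the quantities $\sqrt\varrho\,u$, $\nabla(\varrho w)$, $\nabla(\varrho\overline v)$, all in $L^2$, rather than through $\nabla u$. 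This is precisely the difficulty the augmented formulation was designed to bypass, so, using in addition the hypotheses $r\in C^1$, $\overline V,W\in C^2$ to control every integrand, I expect the argument to close with only this bookkeeping.
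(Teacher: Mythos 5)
Your argument is correct and is essentially the paper's own route: the paper proves this proposition simply by invoking Proposition 14 of Bresch et al. \cite{BGL} with the pressure terms dropped, and that cited proof is precisely the computation you spell out (expand the squares in $\mathcal{E}$, use the energy inequality \eqref{as_ei}, test \eqref{as_cont} with $\tfrac12|W|^2$, $\tfrac12|\overline V|^2$ and \eqref{as_mom_w}--\eqref{as_mom_v} with $W$, $\overline V$, and recombine the $\sqrt{\varrho}\,\mathbb{T}:\nabla$ terms into the symmetric $\nu$-block and the skew $\sqrt{\kappa^{2}-\nu^{2}}$-block). The bookkeeping caveats you flag (only an energy inequality, no $H^1$ bound on $u$, smoothness of the test fields) are exactly the points the augmented formulation and the regularity hypotheses on $r,\overline V,W$ are designed to handle, so no gap remains.
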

\begin{proof}
	The proof follows similar arguments as in Bresch et al. \cite{BGL}, Preposition 14, in which is possible to drop the pressure contribution.
\end{proof}

Now, we introduce the following system satisfied by the strong solutions $\left(r,\overline{V},W\right)$:
\begin{equation} \label{cont_strong}
\partial_{t}r+\textrm{div}\left(rU\right)=0,
\end{equation}
\begin{equation} \label{mom_strong_w}
r\left(\partial_{t}W+\nabla W\cdot U\right)-\nu\textrm{div}\left(r\nabla W\right)-\sqrt{\kappa^{2}-\nu^{2}}\textrm{div}\left(r\nabla\overline{V}\right)=0,
\end{equation}
\begin{equation} \label{mom_strong_v}
r\left(\partial_{t}\overline{V}+\nabla\overline{V}\cdot U\right)-\nu\textrm{div}\left(r\nabla\overline{V}\right)+\sqrt{\kappa^{2}-\nu^{2}}\textrm{div}\left(r\nabla W\right)=0,
\end{equation}
with
\begin{equation} \label{strong_UVW}
U=W-\nu V, \ \ \overline{V}=\sqrt{\kappa^{2}-\nu^{2}}V,
\end{equation}
and belonging to the following class
\begin{equation} \label{reg_class}
\begin{array}{c}
0<\inf_{\left(0,T\right)\times\Omega}r\leq r\leq\sup_{\left(0,T\right)\times\Omega}r<+\infty,\\
\nabla r\in L^{2}\left(0,T;L^{\infty}\left(\Omega\right)\right)\cap L^{1}\left(0,T;W^{1,\infty}\left(\Omega\right)\right),\\
W\in L^{\infty}\left(0,T;W^{2,\infty}\left(\Omega\right)\right)\cap W^{1,\infty}\left(0,T;L^{\infty}\left(\Omega\right)\right),\\
\overline{V}\in L^{\infty}\left(0,T;W^{2,\infty}\left(\Omega\right)\right)\cap W^{1,\infty}\left(0,T;L^{\infty}\left(\Omega\right)\right).
\end{array}
\end{equation}
The following Proposition holds.
\begin{prop}
	Let $\left(r,\overline{V},W\right)$ be a strong solution of (\ref{cont_strong}) - (\ref{mom_strong_v}). Then, any weak solution of the system (\ref{as_cont}) - (\ref{as_mom_v}) satisfies the following inequality
	\[
	\mathcal{E}\left(t\right)\leq \mathcal{E}\left(0\right)
	\]
	\[
	+\int_{0}^{T}\int_{\Omega}\varrho\left[\left(\nabla\overline{V}\cdot\left(u-U\right)\right)\cdot\left(\overline{V}-\overline{v}\right)+\left(\nabla W\cdot\left(u-U\right)\right)\cdot\left(W-w\right)\right]dxdt
	\]
	\[
	+\nu\int_{0}^{T}\int_{\Omega}\varrho\left(\left|\nabla\overline{V}\right|^{2}+\left|\nabla W\right|^{2}\right)dxdt
	\]
	\[
	-\nu\int_{0}^{T}\int_{\Omega}\sqrt{\varrho}\left(\mathbb{T}\left(\overline{v}\right):\nabla\overline{V}+\mathbb{T}\left(w\right):\nabla W\right)dxdt
	\]
	\[
	+\nu\int_{0}^{T}\int_{\Omega}\frac{\varrho}{r}\left(\textrm{div}\left(r\nabla\overline{V}\right)\cdot\left(\overline{V}-\overline{v}\right)+\textrm{div}\left(r\nabla W\right)\cdot\left(W-w\right)\right)dxdt
	\]
	\[
	+\sqrt{\kappa^{2}-\nu^{2}}\int_{0}^{T}\int_{\Omega}\sqrt{\varrho}\left(\mathbb{T}\left(\overline{v}\right):\nabla W-\mathbb{T}\left(w\right):\nabla\overline{V}\right)dxdt
	\]
	\begin{equation} \label{rei_as_1}
	+\sqrt{\kappa^{2}-\nu^{2}}\int_{0}^{T}\int_{\Omega}\frac{\varrho}{r}\left(\textrm{div}\left(r\nabla\overline{V}\right)\cdot\left(W-w\right)-\textrm{div}\left(r\nabla W\right)\cdot\left(\overline{V}-\overline{v}\right)\right)dxdt.
	\end{equation}	
\end{prop}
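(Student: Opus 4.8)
The strategy is to deduce the inequality (\ref{rei_as_1}) directly from the relative energy inequality (\ref{rei_as}) of Proposition \ref{REI} by inserting the equations (\ref{cont_strong})--(\ref{mom_strong_v}) satisfied by the strong solution $(r,\overline{V},W)$, along the lines of Bresch et al. \cite{BGL}. Two preliminary observations make (\ref{rei_as}) applicable. First, by Remark \ref{weak_consist}, a weak solution $(\varrho,u)$ of the pressureless quantum Navier--Stokes system induces a weak solution $(\varrho,\overline{v},w)$ of the augmented system (\ref{as_cont})--(\ref{as_mom_v}) in the sense of Definition \ref{ws_augm}, hence it satisfies the relative energy inequality (\ref{rei_as}). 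Second, since $(r,\overline{V},W)$ belongs to the class (\ref{reg_class}) -- in particular $\inf_{(0,T)\times\Omega}r>0$, $\overline{V},W\in L^{\infty}(0,T;W^{2,\infty})\cap W^{1,\infty}(0,T;L^{\infty})$ and $\nabla r\in L^{2}(0,T;L^{\infty})\cap L^{1}(0,T;W^{1,\infty})$ -- this triple is an admissible test object for (\ref{rei_as}), the smoothness hypotheses $r\in C^{1}$, $\overline{V},W\in C^{2}$ of Proposition \ref{REI} being recovered, if necessary, by a standard space--time mollification together with the uniform bounds in (\ref{reg_class}).

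Next, from (\ref{mom_strong_w}) and (\ref{mom_strong_v}), dividing by $r$ (legitimate because $r$ is bounded away from zero), one gets
\[
\partial_{t}W+\nabla W\cdot U=\frac{1}{r}\left(\nu\,\textrm{div}\left(r\nabla W\right)+\sqrt{\kappa^{2}-\nu^{2}}\,\textrm{div}\left(r\nabla\overline{V}\right)\right),
\]
\[
\partial_{t}\overline{V}+\nabla\overline{V}\cdot U=\frac{1}{r}\left(\nu\,\textrm{div}\left(r\nabla\overline{V}\right)-\sqrt{\kappa^{2}-\nu^{2}}\,\textrm{div}\left(r\nabla W\right)\right).
\]
In the second and third lines of (\ref{rei_as}) we write $\partial_{t}\overline{V}=\left(\partial_{t}\overline{V}+\nabla\overline{V}\cdot U\right)-\nabla\overline{V}\cdot U$, so that $\partial_{t}\overline{V}\cdot(\overline{V}-\overline{v})+(\nabla\overline{V}u)\cdot(\overline{V}-\overline{v})=(\partial_{t}\overline{V}+\nabla\overline{V}\cdot U)\cdot(\overline{V}-\overline{v})+(\nabla\overline{V}\cdot(u-U))\cdot(\overline{V}-\overline{v})$, and analogously for the pair $(W,w)$. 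Substituting the two displayed identities turns the time-derivative contributions of (\ref{rei_as}) into the convective remainders $\varrho\left(\nabla\overline{V}\cdot(u-U)\right)\cdot(\overline{V}-\overline{v})$ and $\varrho\left(\nabla W\cdot(u-U)\right)\cdot(W-w)$ -- which form the first line of (\ref{rei_as_1}) -- together with $\frac{\varrho}{r}$ times the divergence expressions above.

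It then remains to regroup those divergence terms according to the two constants. The parts proportional to $\nu$ sum to $\nu\int_{0}^{T}\int_{\Omega}\frac{\varrho}{r}\left(\textrm{div}(r\nabla\overline{V})\cdot(\overline{V}-\overline{v})+\textrm{div}(r\nabla W)\cdot(W-w)\right)dxdt$, while the parts proportional to $\sqrt{\kappa^{2}-\nu^{2}}$, with the signs dictated by the two strong-solution equations, sum to $\sqrt{\kappa^{2}-\nu^{2}}\int_{0}^{T}\int_{\Omega}\frac{\varrho}{r}\left(\textrm{div}(r\nabla\overline{V})\cdot(W-w)-\textrm{div}(r\nabla W)\cdot(\overline{V}-\overline{v})\right)dxdt$. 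All the other terms of (\ref{rei_as}) -- the viscous term $\nu\int\varrho(|\nabla\overline{V}|^{2}+|\nabla W|^{2})-\sqrt{\varrho}(\mathbb{T}(\overline{v}):\nabla\overline{V}+\mathbb{T}(w):\nabla W)$ and the capillary term $\sqrt{\kappa^{2}-\nu^{2}}\int\sqrt{\varrho}(\mathbb{T}(\overline{v}):\nabla W-\mathbb{T}(w):\nabla\overline{V})$ -- are left untouched, and one reads off exactly the right-hand side of (\ref{rei_as_1}).

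The algebraic identity above is routine; the point requiring care is the integrability bookkeeping needed to make sense of the manipulations in the presence of vacuum. The terms $\frac{\varrho}{r}\,\textrm{div}(r\nabla W)\cdot(W-w)$ and the like should be split as $\frac{\sqrt{\varrho}}{r}\,\textrm{div}(r\nabla W)\cdot(\sqrt{\varrho}\,w)$, and controlled using $r\geq\inf r>0$, $\textrm{div}(r\nabla W)=\nabla r\cdot\nabla W+r\Delta W\in L^{2}(0,T;L^{\infty})$ from (\ref{reg_class}), $\sqrt{\varrho}\in L^{\infty}(0,T;H^{1})\hookrightarrow L^{\infty}(0,T;L^{6})$ and $\sqrt{\varrho}\,w,\ \sqrt{\varrho}\,\overline{v}\in L^{\infty}(0,T;L^{2})$ from the energy inequality (\ref{as_ei}); since $\Omega=\mathbb{T}^{d}$ is bounded this yields integrable products, and the same reasoning, combined with the $\sqrt{\varrho}\mathbb{T}(\cdot)$ formulation of Definition \ref{ws_augm}, handles every remaining term. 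This, rather than the substitution itself, is the main obstacle, and once it is dealt with the proof is complete.
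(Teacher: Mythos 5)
Your argument is correct and is essentially the paper's own proof, which simply delegates to Proposition 15 of Bresch et al.\ \cite{BGL} with the pressure terms dropped: you substitute the strong-solution equations (\ref{mom_strong_w})--(\ref{mom_strong_v}), solved for $\partial_{t}W+\nabla W\cdot U$ and $\partial_{t}\overline{V}+\nabla\overline{V}\cdot U$, into the relative energy inequality (\ref{rei_as}) and regroup the $\nu$ and $\sqrt{\kappa^{2}-\nu^{2}}$ contributions, which is exactly the cited argument. Your additional remarks on mollifying $(r,\overline{V},W)$ to meet the smoothness hypotheses of Proposition \ref{REI} and on the integrability of the $\frac{\varrho}{r}\,\textrm{div}(r\nabla\cdot)$ terms via (\ref{reg_class}) and the energy bounds are sensible bookkeeping that the paper leaves implicit.
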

\begin{proof}
	The proof follows similar arguments as in Bresch et al. \cite{BGL}, Preposition 15, in which is possible to drop the pressure contribution.
\end{proof}
We rewrite the relation (\ref{rei_as_1}) as follows
\[
\mathcal{E}\left(t\right)\leq \mathcal{E}\left(0\right)
\]
\[
+\int_{0}^{T}\int_{\Omega}\varrho\left[\left(\nabla\overline{V}\cdot\left(u-U\right)\right)\cdot\left(\overline{V}-\overline{v}\right)+\left(\nabla W\cdot\left(u-U\right)\right)\cdot\left(W-w\right)\right]dxdt
\]
\begin{equation} \label{rei_as_2}
+\nu I_{1} + \sqrt{\kappa^{2}-\nu^{2}} I_{2},
\end{equation}
where
\[
I_{1}=\int_{0}^{T}\int_{\Omega}\varrho\left(\left|\nabla\overline{V}\right|^{2}+\left|\nabla W\right|^{2}\right)dxdt
\]
\[
-\int_{0}^{T}\int_{\Omega}\sqrt{\varrho}\left(\mathbb{T}\left(\overline{v}\right):\nabla\overline{V}+\mathbb{T}\left(w\right):\nabla W\right)dxdt
\]
\[
+\int_{0}^{T}\int_{\Omega}\frac{\varrho}{r}\left(\textrm{div}\left(r\nabla\overline{V}\right)\cdot\left(\overline{V}-\overline{v}\right)+\textrm{div}\left(r\nabla W\right)\cdot\left(W-w\right)\right)dxdt
\]
and
\[
I_{2}=\int_{0}^{T}\int_{\Omega}\frac{\varrho}{r}\left(\textrm{div}\left(r\nabla\overline{V}\right)\cdot\left(W-w\right)-\textrm{div}\left(r\nabla W\right)\cdot\left(\overline{V}-\overline{v}\right)\right)dxdt
\]
\[
+\int_{0}^{T}\int_{\Omega}\sqrt{\varrho}\left(\mathbb{T}\left(\overline{v}\right):\nabla W-\mathbb{T}\left(w\right):\nabla\overline{V}\right)dxdt.
\]
Now, for $\mu\left(\varrho\right)=\nu\varrho$ and $\mu\left(r \right)=\nu r$, is possible to show that
\[
-\int_{0}^{T}\int_{\Omega}\sqrt{\varrho}\left(\mathbb{T}\left(\overline{v}\right):\nabla\overline{V}+\mathbb{T}\left(w\right):\nabla W\right)dxdt
\]
\[
+\int_{0}^{T}\int_{\Omega}\frac{\varrho}{r}\left(\textrm{div}\left(r\nabla\overline{V}\right)\cdot\left(\overline{V}-\overline{v}\right)+\textrm{div}\left(r\nabla W\right)\cdot\left(W-w\right)\right)dxdt,
\]
\begin{equation} \label{I_1}
=-\int_{0}^{T}\int_{\Omega}\varrho\left(v-V\right)\left(\nabla\overline{V}\left(\overline{V}-\overline{v}\right)+\nabla W\left(W-w\right)\right)dxdt,
\end{equation}
and
\begin{equation} \label{I_2}
I_{2}=-\int_{0}^{T}\int_{\Omega}\varrho\left(v-V\right)\left(\nabla\overline{V}\left(W-w\right)+\nabla W\left(\overline{V}-\overline{v}\right)\right)dxdt.
\end{equation}
Indeed, realizing that $\mu\left(\varrho\right)/\varrho=v$ and  $\mu\left(r\right)/r=V$, relation (\ref{I_1}) and (\ref{I_2}) can be obtained using the definition (\ref{T}) and integrating by parts. Consequently, thanks to the regularity properties (\ref{reg_class}), we have
\begin{equation} \label{I_1_I_2}
\left|I_{1}+I_{2}\right|\leq C\int_{0}^{T}\int_{\Omega}\varrho\left(\left|v-V\right|^{2}+\left|\overline{v}-\overline{V}\right|^{2}+\left|w-W\right|^{2}\right)dxdt.
\end{equation}
Finally, from relation (\ref{rei_as_2}), we end up with
\[
\mathcal{E}\left(t\right)-\mathcal{E}\left(0\right)\leq C\int_{0}^{T}\int_{\Omega}\varrho\left(\left|u-U\right|^{2}+\left|\overline{v}-\overline{V}\right|^{2}+\left|w-W\right|^{2}\right)dxdt,
\]
that means
\begin{equation} \label{gronwall}
\mathcal{E}\left(t\right) \leq \mathcal{E}\left(0\right) + C\int_{0}^{T}\mathcal{E}\left(t\right).
\end{equation}
In the case weak and strong solutions are emanated from the same initial data, then $\mathcal{E}\left(0\right)=0$ and we obtain $w=W$, $u=U$ and $\overline{v}=\overline{V}$ thanks to the application of the Gronwall Lemma. The weak-strong uniqueness for the densities $\varrho$ and $r$ follows by recalling the uniqueness properties of the continuity equations (\ref{ag_1}) or (\ref{cont_strong}) with smooth velocity field $u=U$ and initial data $\varrho_{0}=r_{0}$.
%

\section{Conclusions} \label{end}
The present work focused on the analysis of the highly compressible limit for an isothermal model of capillary and quantum fluid and on the investigation of the related pressureless systems.

In the first case, we prove the existence of weak solution for the pressureless system and the convergence results. The existence was obtained with the notion of weak solutions presented in Bresch et al. \cite{BDL}. It will be a matter of future research to prove the existence of weak solutions following the notion of weak solutions introduced by Antonelli and Spirito \cite{AS_3} (see also \cite{AS_2}). In the same framework, also a weak-strong uniqueness analysis could be of some interest.

In the second case the convergence towards the pressureless system is shown and a weak-strong uniqueness analysis is developed. A relative entropy inequality based on an "augmented" version of the Navier-Stokes system was used and the resulting analysis on weak-strong uniqueness property was performed assuming the existence of the strong solution. 
In the same spirit, also the existence of weak solutions for the pressureless quantum Navier-Stokes system could be a matter of future investigations.

\appendix
\section{Appendix} \label{app}
The following analysis aims to prove a stability result for a quantum Navier-Stokes system without the presence of the pressure contribution. The analysis follows a similar line of \cite{AS_1} and we sketch here some steps in order to justify the convergences (\ref{conv-q-1}) - (\ref{conv-q-3}). 

We consider the quantum Navier-Stokes system (\ref{cont._1_qt}) - (\ref{mom._1_qt}) where we set $\varepsilon=0$, namely

\begin{equation} \label{cont._1_qt_press}
\partial_{t}\varrho+\textrm{div}\left(\varrho u\right)=0,
\end{equation}
\begin{equation} \label{mom._1_qt_press}
\partial_{t}\left(\varrho u\right)+\textrm{div}\left(\varrho u\otimes u\right)-2\nu\textrm{div}\left(\varrho D\left(u\right)\right)=2\kappa^2\varrho\nabla\left(\frac{\Delta\sqrt{\varrho}}{\sqrt{\varrho}}\right).
\end{equation}

We start with the \textit{a priori} estimates needed for the convergence. The first lemma is the standard energy inequality associated to the quantum Navier-Stokes system (\ref{cont._1_qt_press}) - (\ref{mom._1_qt_press}).
\begin{lemma} \label{ei_press}
	Let $\left(\varrho_{n},u_{n}\right)$ be a smooth solution of (\ref{cont._1_qt_press}) - (\ref{mom._1_qt_press}). Then,
	\begin{equation} \label{ee_press}
	\frac{d}{dt}\left(\int_{\Omega}\varrho_{n}\frac{\left|u_{n}\right|^{2}}{2}+2\kappa^{2}\left|\nabla\sqrt{\varrho_{n}}\right|^{2}dx\right)+2\nu\int_{\Omega}\varrho_{n}\left|Du_{n}\right|^{2}dx=0.
	\end{equation}
\end{lemma}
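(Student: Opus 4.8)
The plan is to carry out the classical kinetic-energy estimate: test the momentum equation \eqref{mom._1_qt_press} against the velocity $u_n$, integrate over the torus $\Omega$, and exploit the smoothness of $(\varrho_n,u_n)$ (and, at the level of the approximating scheme, the strict positivity of $\varrho_n$) to justify all the integrations by parts. There are three contributions to control: a transport part, a viscous part, and the capillary (Bohm) part.

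\emph{Transport and viscous parts.} Multiplying \eqref{mom._1_qt_press} by $u_n$ and integrating, the first two terms combine, after inserting the continuity equation \eqref{cont._1_qt_press}, into
\[
\int_{\Omega}\left(\partial_{t}(\varrho_n u_n)+\textrm{div}(\varrho_n u_n\otimes u_n)\right)\cdot u_n\,dx=\frac{d}{dt}\int_{\Omega}\varrho_n\frac{|u_n|^{2}}{2}\,dx,
\]
since the integrand equals $\partial_{t}\!\left(\varrho_n\frac{|u_n|^{2}}{2}\right)+\textrm{div}\!\left(\varrho_n u_n\frac{|u_n|^{2}}{2}\right)+\frac{1}{2}|u_n|^{2}\left(\partial_{t}\varrho_n+\textrm{div}(\varrho_n u_n)\right)$, where the last bracket vanishes and the divergence integrates to zero on $\Omega=\mathbb{T}^{d}$. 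For the viscous term, integrating by parts and using the symmetry of $D(u_n)$,
\[
-2\nu\int_{\Omega}\textrm{div}(\varrho_n D(u_n))\cdot u_n\,dx=2\nu\int_{\Omega}\varrho_n D(u_n):\nabla u_n\,dx=2\nu\int_{\Omega}\varrho_n|D(u_n)|^{2}\,dx.
\]

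\emph{Capillary (Bohm) term.} Integrating by parts in space and then using \eqref{cont._1_qt_press},
\[
2\kappa^{2}\int_{\Omega}\varrho_n\nabla\!\left(\frac{\Delta\sqrt{\varrho_n}}{\sqrt{\varrho_n}}\right)\!\cdot u_n\,dx=-2\kappa^{2}\int_{\Omega}\textrm{div}(\varrho_n u_n)\,\frac{\Delta\sqrt{\varrho_n}}{\sqrt{\varrho_n}}\,dx=2\kappa^{2}\int_{\Omega}\partial_{t}\varrho_n\,\frac{\Delta\sqrt{\varrho_n}}{\sqrt{\varrho_n}}\,dx.
\]
Writing $\partial_{t}\varrho_n=2\sqrt{\varrho_n}\,\partial_{t}\sqrt{\varrho_n}$ and integrating by parts once more in space, this equals
\[
4\kappa^{2}\int_{\Omega}\partial_{t}\sqrt{\varrho_n}\,\Delta\sqrt{\varrho_n}\,dx=-4\kappa^{2}\int_{\Omega}\nabla\partial_{t}\sqrt{\varrho_n}\cdot\nabla\sqrt{\varrho_n}\,dx=-2\kappa^{2}\frac{d}{dt}\int_{\Omega}|\nabla\sqrt{\varrho_n}|^{2}\,dx.
\]
Collecting the three contributions and moving the capillary term to the left-hand side gives exactly \eqref{ee_press}.

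\emph{Main obstacle.} At the level of smooth solutions there is no genuine obstacle: the proof is a routine integration-by-parts argument. The only point that requires care is the capillary term, where one must recognize the Bohm structure, use the continuity equation to transfer a spatial derivative onto a time derivative, and deal with the division by $\sqrt{\varrho_n}$ — harmless for a strictly positive smooth (approximating) density, and in any case avoidable by using the equivalent form $2\kappa^{2}\varrho_n\nabla(\Delta\sqrt{\varrho_n}/\sqrt{\varrho_n})=\kappa^{2}\textrm{div}(\varrho_n\nabla\nabla\log\varrho_n)$ together with the transport equation \eqref{mu_eq} for $\mu(\varrho_n)=\nu\varrho_n$. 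When one passes to the limit along the approximating sequence the identity degrades to the corresponding inequality (which is why the quantity is called an energy inequality in the text), but for a smooth solution it holds with equality precisely as written.
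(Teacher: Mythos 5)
Your proof is correct: the kinetic-energy computation, the viscous term via symmetry of $D(u_n)$, and the treatment of the Bohm term through integration by parts, the continuity equation, and the identity $\partial_t\varrho_n=2\sqrt{\varrho_n}\,\partial_t\sqrt{\varrho_n}$ all check out and yield exactly \eqref{ee_press}. The paper does not reproduce this argument but simply cites J\"ungel \cite{J}; your derivation is precisely the standard energy estimate that reference contains, so there is nothing further to reconcile.
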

\begin{proof}
	See J\" ungel \cite{J}.
\end{proof}
Now, we are going to consider a velocity field $w=u+\mu\nabla\log\varrho$. Consequently, (\ref{cont._1_qt_press}) - (\ref{mom._1_qt_press}) can be transformed in terms of $(\varrho,w)$. More precisely, we have the following lemma.  
\begin{lemma} \label{eq_w}
	Let $\left(\varrho_{n},u_{n}\right)$ be a smooth solution of (\ref{cont._1_qt_press}) - (\ref{mom._1_qt_press}). Given $\nu>\kappa$, let $\mu=\nu-\sqrt{\nu^{2}-\kappa^{2}}$, $\lambda=\sqrt{\nu^{2}-\kappa^{2}}$ and $w_{n}=u_{n}+\mu\nabla\log\varrho_{n}$. Then, $\left(\varrho_{n},w_{n}\right)$ satisfies
	\begin{equation} \label{cont_w}
	\partial_{t}\varrho_{n}+\textrm{div}\left(\varrho_{n}w_{n}\right)=\mu\Delta\varrho_{n},
	\end{equation}
	\begin{equation} \label{mom_w}
	\partial_{t}\left(\varrho_{n}w_{n}\right)+\textrm{div}\left(\varrho_{n}w_{n}\otimes w_{n}\right)-\mu\Delta\left(\varrho_{n}w_{n}\right)-2\lambda\textrm{div}\left(\varrho_{n}Dw_{n}\right)=0.
	\end{equation}
\end{lemma}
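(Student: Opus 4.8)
The plan is to treat the two equations separately. Equation (\ref{cont_w}) is immediate: since $\varrho_{n}>0$ (part of the smoothness assumed for the approximate solutions) and $\varrho_{n}\nabla\log\varrho_{n}=\nabla\varrho_{n}$, the definition of $w_{n}$ gives $\varrho_{n}w_{n}=\varrho_{n}u_{n}+\mu\nabla\varrho_{n}$, hence $\textrm{div}(\varrho_{n}w_{n})=\textrm{div}(\varrho_{n}u_{n})+\mu\Delta\varrho_{n}$, and adding $\mu\Delta\varrho_{n}$ to both sides of (\ref{cont._1_qt_press}) yields (\ref{cont_w}).

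For (\ref{mom_w}) I would add to the momentum equation (\ref{mom._1_qt_press}) the auxiliary identity obtained by differentiating the mass equation in space. Specializing the identity (\ref{mu_eq})--(\ref{phi_eq}), valid for a general density-function, to the case in which that function equals $\mu\varrho$ with $\mu$ the constant above (so that $\phi(\varrho)=\mu\log\varrho$ and $\varrho\nabla\phi(\varrho)=\mu\nabla\varrho$), one gets for smooth positive solutions
\begin{equation} \label{aux_qns}
\partial_{t}(\mu\nabla\varrho_{n})+\textrm{div}(\varrho_{n}u_{n}\otimes\mu\nabla\log\varrho_{n})+\mu\,\textrm{div}(\varrho_{n}\nabla^{t}u_{n})=0,
\end{equation}
and I would rewrite the quantum term in divergence form by the identity $2\kappa^{2}\varrho_{n}\nabla(\Delta\sqrt{\varrho_{n}}/\sqrt{\varrho_{n}})=\kappa^{2}\,\textrm{div}(\varrho_{n}\nabla\nabla\log\varrho_{n})$ already recorded in the main text. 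Summing (\ref{aux_qns}) and (\ref{mom._1_qt_press}), the time derivatives collapse to $\partial_{t}(\varrho_{n}u_{n}+\mu\nabla\varrho_{n})=\partial_{t}(\varrho_{n}w_{n})$ and the convective terms combine into $\textrm{div}(\varrho_{n}u_{n}\otimes w_{n})=\textrm{div}(\varrho_{n}w_{n}\otimes w_{n})-\mu\,\textrm{div}(\nabla\varrho_{n}\otimes w_{n})$; substituting $u_{n}=w_{n}-\mu\nabla\log\varrho_{n}$ and using $D(\nabla\log\varrho_{n})=\nabla^{t}\nabla\log\varrho_{n}=\nabla\nabla\log\varrho_{n}$ (the Hessian being symmetric), the viscous term becomes $-2\nu\,\textrm{div}(\varrho_{n}D(w_{n}))+2\nu\mu\,\textrm{div}(\varrho_{n}\nabla\nabla\log\varrho_{n})$ and the last term of (\ref{aux_qns}) becomes $\mu\,\textrm{div}(\varrho_{n}\nabla^{t}w_{n})-\mu^{2}\,\textrm{div}(\varrho_{n}\nabla\nabla\log\varrho_{n})$.

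At this point the three contributions proportional to $\textrm{div}(\varrho_{n}\nabla\nabla\log\varrho_{n})$ carry total coefficient $2\nu\mu-\mu^{2}-\kappa^{2}$, which vanishes precisely because $\mu^{2}-2\nu\mu+\kappa^{2}=0$ — this is the defining relation of $\mu=\nu-\sqrt{\nu^{2}-\kappa^{2}}$, $\lambda=\nu-\mu=\sqrt{\nu^{2}-\kappa^{2}}$, and it is exactly here that the hypothesis $\nu>\kappa$ is used. What is then left is $\partial_{t}(\varrho_{n}w_{n})+\textrm{div}(\varrho_{n}w_{n}\otimes w_{n})-2\nu\,\textrm{div}(\varrho_{n}D(w_{n}))+\mu\,\textrm{div}(\varrho_{n}\nabla^{t}w_{n})-\mu\,\textrm{div}(\nabla\varrho_{n}\otimes w_{n})=0$, and, writing $2D(w_{n})=\nabla w_{n}+\nabla^{t}w_{n}$, using $\nu=\mu+\lambda$, and the elementary identity $\Delta(\varrho_{n}w_{n})=\textrm{div}(\varrho_{n}\nabla w_{n})+\textrm{div}(\nabla\varrho_{n}\otimes w_{n})$, this rearranges into (\ref{mom_w}). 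The hard part is this last piece of bookkeeping: one has to track every first-order term in $\nabla\varrho_{n}$ produced by the substitution $u_{n}=w_{n}-\mu\nabla\log\varrho_{n}$ and by the asymmetry of the tensors in (\ref{aux_qns}), and check that they reassemble with no remainder into the combination $\mu\Delta(\varrho_{n}w_{n})+2\lambda\,\textrm{div}(\varrho_{n}D(w_{n}))$; it is precisely this requirement that forces the quadratic relation on $\mu$. Since this computation is standard, one may alternatively just quote it (see J\"{u}ngel \cite{J}, Vasseur and Yu \cite{VY-1}; see also Antonelli and Spirito \cite{AS_1}).
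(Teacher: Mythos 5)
Your proof is correct and takes essentially the same route the paper relies on: the paper's own proof of this lemma simply appeals to Antonelli--Spirito \cite{AS_1} (dropping the pressure), and your argument is exactly that effective-velocity computation written out — I checked that the coefficient of $\textrm{div}(\varrho_{n}\nabla\nabla\log\varrho_{n})$ is $2\nu\mu-\mu^{2}-\kappa^{2}=0$ by the quadratic relation defining $\mu$, and that the leftover terms regroup, via $\nu=\mu+\lambda$ and $\Delta(\varrho_{n}w_{n})=\textrm{div}(\varrho_{n}\nabla w_{n})+\textrm{div}(\nabla\varrho_{n}\otimes w_{n})$, into $\mu\Delta(\varrho_{n}w_{n})+2\lambda\,\textrm{div}(\varrho_{n}Dw_{n})$. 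The only difference is that you carry out explicitly the bookkeeping the paper delegates to the citation.
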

\begin{proof}
	The proof follows the same line as in \cite{AS_1} and it could be easily reproduced without the presence of the pressure contribution. 
\end{proof}

The next lemma concerns the energy estimate to the system (\ref{cont_w}) - (\ref{mom_w}).
\begin{lemma} \label{e_est}
 	Let $\left(\varrho_{n},u_{n}\right)$ be a smooth solution of (\ref{cont._1_qt_press}) - (\ref{mom._1_qt_press}). Then, $w_{n}=u_{n}+\mu\nabla\log\varrho_{n}$ and $\varrho_{n}$ satisfy
 	\begin{equation} \label{ee_w}
 	\frac{d}{dt}\int_{\Omega}\varrho_{n}\frac{\left|w_{n}\right|^{2}}{2}dx+2\lambda\int_{\Omega}\varrho_{n}\left|Dw_{n}\right|^{2}dx+\mu\int_{\Omega}\varrho_{n}\left|\nabla w_{n}\right|^{2}dx=0.
 	\end{equation}
\end{lemma}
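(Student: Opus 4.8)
The plan is to test the momentum equation \eqref{mom_w} of the reformulated system against $w_n$, integrate over the periodic box $\Omega=\mathbb{T}^{d}$, and use the modified continuity equation \eqref{cont_w} to close the identity. Since $(\varrho_n,u_n)$ is smooth, so is $(\varrho_n,w_n)$, and all integrations by parts on $\mathbb{T}^{d}$ produce no boundary terms.

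First I would deal with the transport part. Combining $\partial_t(\varrho_n w_n)\cdot w_n$ and $\textrm{div}(\varrho_n w_n\otimes w_n)\cdot w_n$ through the elementary identity
\begin{multline*}
\partial_t(\varrho_n w_n)\cdot w_n + \textrm{div}(\varrho_n w_n\otimes w_n)\cdot w_n \\
= \partial_t\!\Big(\tfrac12\varrho_n|w_n|^2\Big) + \textrm{div}\!\Big(\tfrac12\varrho_n|w_n|^2 w_n\Big) + \tfrac12|w_n|^2\big(\partial_t\varrho_n + \textrm{div}(\varrho_n w_n)\big),
\end{multline*}
integration yields $\frac{d}{dt}\int_\Omega \tfrac12\varrho_n|w_n|^2\,dx$ together with the remainder $\int_\Omega \tfrac12|w_n|^2(\partial_t\varrho_n+\textrm{div}(\varrho_n w_n))\,dx$. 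At this point \eqref{cont_w} gives $\partial_t\varrho_n+\textrm{div}(\varrho_n w_n)=\mu\Delta\varrho_n$, so the remainder equals $\mu\int_\Omega \tfrac12|w_n|^2\Delta\varrho_n\,dx = -\mu\int_\Omega \nabla\varrho_n\cdot\nabla\big(\tfrac12|w_n|^2\big)\,dx$.

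Next I would treat the two viscous terms. Testing $-\mu\Delta(\varrho_n w_n)$ against $w_n$ and integrating by parts gives $\mu\int_\Omega \nabla(\varrho_n w_n):\nabla w_n\,dx$; writing $\nabla(\varrho_n w_n)=\nabla\varrho_n\otimes w_n+\varrho_n\nabla w_n$ and contracting, this splits into $\mu\int_\Omega \varrho_n|\nabla w_n|^2\,dx + \mu\int_\Omega \nabla\varrho_n\cdot\nabla\big(\tfrac12|w_n|^2\big)\,dx$. The second summand is exactly the negative of the remainder produced above, so the two cancel — this cancellation, which hinges on the constant in front of $\Delta\varrho_n$ in \eqref{cont_w} being the same $\mu$ as in $-\mu\Delta(\varrho_n w_n)$ (equivalently, on the choice $w_n=u_n+\mu\nabla\log\varrho_n$), is the heart of the argument. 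For the remaining viscous term, testing $-2\lambda\,\textrm{div}(\varrho_n Dw_n)$ against $w_n$ and integrating by parts produces $2\lambda\int_\Omega \varrho_n Dw_n:\nabla w_n\,dx = 2\lambda\int_\Omega \varrho_n|Dw_n|^2\,dx$, using the symmetry of $Dw_n$. Collecting all contributions gives precisely \eqref{ee_w}.

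The computation is otherwise routine; the only delicate point, and the step I expect to require the most care in writing, is the exact matching of the two $\mu$-weighted terms so that they annihilate. No lower bound on the density nor extra integrability is needed, since the identity is derived at the level of smooth solutions of \eqref{cont._1_qt_press} - \eqref{mom._1_qt_press}, in the same spirit as in Antonelli and Spirito \cite{AS_1}.
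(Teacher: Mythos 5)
Your proposal is correct and is exactly the paper's argument: the paper's proof of Lemma \ref{e_est} consists precisely of multiplying \eqref{mom_w} by $w_n$, integrating in space, and using \eqref{cont_w}, which is what you carry out in detail (including the key cancellation of the $\mu\int\nabla\varrho_n\cdot\nabla(|w_n|^2/2)\,dx$ terms and the use of the symmetry of $Dw_n$).
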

\begin{proof}
	In order to get (\ref{ee_w}), we multiply (\ref{mom_w}) by $w_{n}$, we integrate in space and we use (\ref{cont_w}).
\end{proof}
The next \textit{a priori estimate}	is analogous to the one  in \cite{MV}.
\begin{lemma} \label{MV-der}	
 	Let $\left(\varrho_{n},u_{n}\right)$ be a smooth solution of (\ref{cont._1_qt_press}) - (\ref{mom._1_qt_press}). Then, $w_{n}=u_{n}+\mu\nabla\log\varrho_{n}$ and $\varrho_{n}$ satisfy
 	$$
 	\frac{d}{dt}\int_{\Omega}\varrho_{n}\left(1+\frac{\left|w_{n}\right|^{2}}{2}\right)\log\left(1+\frac{\left|w_{n}\right|^{2}}{2}\right)dx+\mu\int_{\Omega}\varrho_{n}\left|\nabla w_{n}\right|^{2}dx
 	$$
 	$$
 	+\mu\int_{\Omega}\varrho_{n}\left|\nabla w_{n}\right|^{2}\log\left(1+\frac{\left|w_{n}\right|^{2}}{2}\right)dx+2\lambda\int_{\Omega}\varrho_{n}\left|Dw_{n}\right|^{2}dx
 	$$
 	$$
 	+2\lambda\int_{\Omega}\varrho_{n}\left|Dw_{n}\right|^{2}\log\left(1+\frac{\left|w_{n}\right|^{2}}{2}\right)dx+\mu\int_{\Omega}\varrho_{n}\left|\nabla\frac{\left|w_{n}\right|^{2}}{2}\right|^{2}\frac{2}{2+\left|w_{n}\right|^{2}}dx
 	$$
 	\begin{equation} \label{mv_qw}
 	+2\lambda\int_{\Omega}\varrho_{n}Dw_{n}w_{n}\nabla w_{n}w_{n}\frac{2}{2+\left|w_{n}\right|^{2}}dx=0.
 	\end{equation}
\end{lemma}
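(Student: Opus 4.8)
The plan is to carry out on the transformed system \eqref{cont_w}--\eqref{mom_w} the Mellet--Vasseur computation of \cite{MV}, in the form adapted to quantum Navier--Stokes equations in \cite{AS_1}; since the lemma is stated for smooth solutions, every manipulation below is legitimate and no approximation argument is needed. Write $\sigma:=\tfrac12|w_n|^2$ and $h(s):=1+\log(1+s)$, so that $h=\Phi'$ where $\Phi(s):=(1+s)\log(1+s)$ is precisely the weight appearing under the time derivative in \eqref{mv_qw}, and note $h'(s)=1/(1+s)$, i.e. $h'(\sigma)=2/(2+|w_n|^2)$. First I would multiply the momentum equation \eqref{mom_w} by $\psi(w_n):=h(\sigma)\,w_n=\big(1+\log(1+\tfrac12|w_n|^2)\big)w_n$ and integrate over $\Omega$. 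Splitting the convective part as $\partial_t(\varrho_n w_n)+\textrm{div}(\varrho_n w_n\otimes w_n)=\varrho_n(\partial_t+w_n\cdot\nabla)w_n+\big(\partial_t\varrho_n+\textrm{div}(\varrho_n w_n)\big)w_n$ and using \eqref{cont_w} to replace the bracket by $\mu\Delta\varrho_n$, equation \eqref{mom_w} becomes $\varrho_n(\partial_t+w_n\cdot\nabla)w_n+\mu(\Delta\varrho_n)w_n-\mu\Delta(\varrho_n w_n)-2\lambda\,\textrm{div}(\varrho_n Dw_n)=0$.

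Next I would handle the four resulting groups of terms. The material-derivative term gives $\int_\Omega\varrho_n(\partial_t+w_n\cdot\nabla)\Phi(\sigma)\,dx$; integrating the transport part by parts and invoking \eqref{cont_w} once more turns it into $\tfrac{d}{dt}\int_\Omega\varrho_n\Phi(\sigma)\,dx-\mu\int_\Omega(\Delta\varrho_n)\Phi(\sigma)\,dx$. The term $\mu(\Delta\varrho_n)w_n$ yields $2\mu\int_\Omega(\Delta\varrho_n)\,h(\sigma)\,\sigma\,dx$. Integrating $-\mu\Delta(\varrho_n w_n)$ by parts against $\psi(w_n)$ produces, besides the dissipation terms $\mu\int_\Omega\varrho_n h(\sigma)|\nabla w_n|^2\,dx+\mu\int_\Omega\varrho_n h'(\sigma)|\nabla\sigma|^2\,dx$, the two cross terms $\mu\int_\Omega h(\sigma)\,\nabla\varrho_n\cdot\nabla\sigma\,dx+2\mu\int_\Omega h'(\sigma)\,\sigma\,\nabla\varrho_n\cdot\nabla\sigma\,dx$; the key point here is that $h(\sigma)\nabla\sigma=\nabla\Phi(\sigma)$ and $h'(\sigma)\,\sigma\,\nabla\sigma=\nabla\big(\sigma-\log(1+\sigma)\big)$, so a further integration by parts rewrites these as $-\mu\int_\Omega(\Delta\varrho_n)\Phi(\sigma)\,dx-2\mu\int_\Omega(\Delta\varrho_n)\big(\sigma-\log(1+\sigma)\big)\,dx$. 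Finally $-2\lambda\,\textrm{div}(\varrho_n Dw_n)$ tested against $\psi(w_n)$ and integrated by parts gives $2\lambda\int_\Omega\varrho_n h(\sigma)|Dw_n|^2\,dx+2\lambda\int_\Omega\varrho_n h'(\sigma)\,(Dw_n)_{ij}(w_n)_i(w_n)_k(\partial_j w_n^k)\,dx$, using the symmetry of $Dw_n$ so that $Dw_n:\nabla w_n=|Dw_n|^2$.

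The crucial cancellation is that all the $\Delta\varrho_n$ contributions add up to $2\mu\int_\Omega(\Delta\varrho_n)\big[h(\sigma)\sigma-\Phi(\sigma)-(\sigma-\log(1+\sigma))\big]\,dx$, which vanishes identically thanks to the elementary identity $h(\sigma)\sigma-\Phi(\sigma)=\sigma-\log(1+\sigma)$; this is exactly the analogue of the cancellation already occurring in the plain energy identity \eqref{ee_w} (where $h\equiv1$). What remains is $\tfrac{d}{dt}\int_\Omega\varrho_n\Phi(\sigma)\,dx$ together with the dissipation terms, and expanding $h(\sigma)=1+\log(1+\sigma)$ and substituting $h'(\sigma)=2/(2+|w_n|^2)$, $\sigma=\tfrac12|w_n|^2$ reproduces exactly the seven terms of \eqref{mv_qw}.

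I expect the only genuinely delicate part to be the bookkeeping of the several $\Delta\varrho_n$ terms coming from the three sources above (the remainder of the material derivative, the $\mu(\Delta\varrho_n)w_n$ contribution, and the two cross terms from the $\mu\Delta(\varrho_n w_n)$ integration by parts). The systematic way to make this cancellation transparent is to express every $\sigma$-dependent factor as the gradient of a scalar function of $\sigma$ -- namely $\Phi(\sigma)$ and $\sigma-\log(1+\sigma)$ -- before integrating by parts, after which the identity $h\sigma-\Phi=\sigma-\log(1+\sigma)$ closes the argument.
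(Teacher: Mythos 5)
Your proposal is correct and follows essentially the same route as the paper: you test \eqref{mom_w} with $w_n\,\beta'\!\left(\tfrac{|w_n|^2}{2}\right)$ for $\beta(t)=(1+t)\log(1+t)$, use \eqref{cont_w} to handle the transport/time-derivative part, integrate the $\mu\Delta(\varrho_n w_n)$ and $2\lambda\,\textrm{div}(\varrho_n Dw_n)$ terms by parts, and observe that all $\Delta\varrho_n$ contributions cancel. The only cosmetic difference is that the paper keeps a general weight $\beta$ until the end (so the cancellation appears structurally, since $\frac{d}{ds}\left(s\beta'(s)-\beta(s)\right)=s\beta''(s)$), whereas you specialize to $\Phi$ early and verify the identity $h(\sigma)\sigma-\Phi(\sigma)=\sigma-\log(1+\sigma)$ explicitly; both computations are equivalent.
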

\begin{proof}
	Let $\beta\in C^{1}\left(\mathbb{R}\right)$. By integration by parts, we have
	$$
	-\mu\int_{\Omega}\Delta\left(\varrho_{n}w_{n}\right)w_{n}\beta^{\prime}\left(\frac{\left|w_{n}\right|^{2}}{2}\right)dx
	$$
	$$
	=-\mu\int_{\Omega}\Delta\varrho_{n}\left|w_{n}\right|^{2}\beta^{\prime}\left(\frac{\left|w_{n}\right|^{2}}{2}\right)dx+\mu\int_{\Omega}\Delta\varrho_{n}\beta\left(\frac{\left|w_{n}\right|^{2}}{2}\right)dx
	$$
	\begin{equation} \label{mv_qw_1}
	+\mu\int_{\Omega}\varrho_{n}\left|\nabla w_{n}\right|^{2}\beta^{\prime}\left(\frac{\left|w_{n}\right|^{2}}{2}\right)dx+\mu\int_{\Omega}\varrho_{n}\left|\nabla\frac{\left|w_{n}\right|^{2}}{2}\right|^{2}\beta^{\prime\prime}\left(\frac{\left|w_{n}\right|^{2}}{2}\right)dx.
	\end{equation}
	Now, multiplying (\ref{mom_w}) by $w_{n}\beta^{\prime}\left(\frac{\left|w_{n}\right|^{2}}{2}\right)$ and integrating by parts, we obtain
	$$
	\frac{d}{dt}\int_{\Omega}\varrho_{n}\beta\left(\frac{\left|w_{n}\right|^{2}}{2}\right)dx+\mu\int_{\Omega}\varrho_{n}\left|\nabla w_{n}\right|^{2}\beta^{\prime}\left(\frac{\left|w_{n}\right|^{2}}{2}\right)dx
	$$
	\begin{equation}
	+\mu\int_{\Omega}\varrho_{n}\left|\nabla\frac{\left|w_{n}\right|^{2}}{2}\right|^{2}\beta^{\prime\prime}\left(\frac{\left|w_{n}\right|^{2}}{2}\right)dx-2\lambda\int_{\Omega}\textrm{div}\left(\varrho_{n}Dw_{n}\right)w_{n}\beta^{\prime}\left(\frac{\left|w_{n}\right|^{2}}{2}\right)dx.
	\end{equation}
	We integrate by parts the last term, namely
	$$
	-2\lambda\int_{\Omega}\textrm{div}\left(\varrho_{n}Dw_{n}\right)w_{n}\beta^{\prime}\left(\frac{\left|w_{n}\right|^{2}}{2}\right)dx
	$$
	$$
	=2\lambda\int_{\Omega}\varrho_{n}\left|Dw_{n}\right|^{2}\beta^{\prime}\left(\frac{\left|w_{n}\right|^{2}}{2}\right)dx+2\lambda\int_{\Omega}\varrho_{n}Dw_{n}w_{n}\nabla w_{n}w_{n}\beta^{{\prime}\prime}\left(\frac{\left|w_{n}\right|^{2}}{2}\right)dx,
	$$
	and choosing $\beta\left(t\right)=\left(1+t\right)\log\left(1+t\right)$ we get (\ref{mv_qw}).
\end{proof}

Now, thanks to Lemmas \ref{ei_press} and  \ref{e_est}, we have the following uniform bounds
$$
\left\Vert \sqrt{\varrho_{n}}u_{n}\right\Vert _{L^{\infty}\left(0,T;L^{2}\left(\Omega\right)\right)}\leq C, \ \ \left\Vert \nabla\sqrt{\varrho_{n}}\right\Vert _{L^{\infty}\left(0,T;L^{2}\left(\Omega\right)\right)}\leq C,
$$
$$
\left\Vert \varrho_{n}\right\Vert _{L^{\infty}\left(0,T;L^{1}\left(\Omega\right)\right)}\leq C, \ \ \left\Vert \sqrt{\varrho_{n}}\nabla u_{n}\right\Vert _{L^{2}\left(0,T;L^{2}\left(\Omega\right)\right)}\leq C,
$$
\begin{equation} \label{ub}
\left\Vert \sqrt{\varrho_{n}}w_{n}\right\Vert _{L^{\infty}\left(0,T;L^{2}\left(\Omega\right)\right)}\leq C, \ \ \left\Vert \sqrt{\varrho_{n}}\nabla w_{n}\right\Vert _{L^{2}\left(0,T;L^{2}\left(\Omega\right)\right)}\leq C.
\end{equation}
Moreover, the following bound holds (see Antonelli and Spirito \cite{AS_1})
\begin{equation} \label{rad_rho}
\left\Vert \sqrt{\varrho_{n}}\right\Vert _{L^{2}\left(0,T;H^{2}\left(\Omega\right)\right)}\leq C.
\end{equation}
The above bounds and the Lemma \ref{MV-der} allow to prove a Mellet-Vasseur type estimate for $\left(\varrho_{n},w_{n}\right)$, namely, under suitable assumptions on the initial data, the following estimate holds (see Antonelli and Spirito \cite{AS_1}, Lemma 4.2)
\begin{equation} \label{MV-est}
\sup_{t}\int_{\Omega}\varrho_{n}\left(1+\frac{\left|w_{n}\right|^{2}}{2}\right)\log\left(1+\frac{\left|w_{n}\right|^{2}}{2}\right)dx\leq C.
\end{equation}
The above estimate is crucial in order to prove the convergence result. In particular, we have the following lemma
\begin{lemma} \label{con}	
	Let $\left(\varrho_{n},u_{n}\right)_{n \in N}$ be a sequence of solutions of (\ref{cont._1_qt}) - (\ref{mom._1_qt}) and $w_{n}=u_{n}+\mu\nabla\log\varrho_{n}$. Then, up to subsequences, we have
	\begin{equation} \label{conv}
	\sqrt{\varrho_{n}}\rightarrow\sqrt{\varrho}\mbox{ in }L^{2}\left(0,T;H^{1}\left(\Omega\right)\right), \ \ \sqrt{\varrho_{n}}u_{n}\rightarrow\sqrt{\varrho}u\mbox{ in }L^{2}\left(0,T;L^{2}\left(\Omega\right)\right),
	\end{equation}
	as $\varepsilon\rightarrow0$. Moreover, $(\varrho,u)$ is a weak solution of the system (\ref{cont._1_qt_press}) - (\ref{mom._1_qt_press}).
\end{lemma}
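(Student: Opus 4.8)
The plan is to extract strong compactness from the uniform bounds \eqref{ub}, \eqref{rad_rho} and the Mellet--Vasseur estimate \eqref{MV-est}, to pass to the limit in the weak formulation, and to recover the energy inequality by weak lower semicontinuity; when a pressure is present (the case $\varepsilon>0$), its contribution $\varepsilon\varrho_\varepsilon^{\gamma}$ is disposed of exactly as in the proof of Theorem \ref{th_conv_q}. First I would handle the density. Rewriting the continuity equation in the form
\[
\partial_{t}\sqrt{\varrho_{n}}=-\textrm{div}\left(\sqrt{\varrho_{n}}\,u_{n}\right)+\tfrac{1}{2}\sqrt{\varrho_{n}}\,\textrm{div}\,u_{n},
\]
the bounds $\sqrt{\varrho_{n}}u_{n}\in L^{\infty}(0,T;L^{2})$ and $\sqrt{\varrho_{n}}\nabla u_{n}\in L^{2}(0,T;L^{2})$ from \eqref{ub} show that $\partial_{t}\sqrt{\varrho_{n}}$ is bounded in $L^{2}(0,T;H^{-1})$. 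Together with \eqref{rad_rho} and $\sqrt{\varrho_{n}}\in L^{\infty}(0,T;H^{1})$, the Aubin--Lions lemma yields, along a subsequence, $\sqrt{\varrho_{n}}\to\sqrt{\varrho}$ in $L^{2}(0,T;H^{1})\cap C([0,T];H^{s})$ for every $s<1$; in particular $\nabla\sqrt{\varrho_{n}}\to\nabla\sqrt{\varrho}$ strongly in $L^{2}((0,T)\times\Omega)$ and $\varrho_{n}\to\varrho$ both a.e. and strongly in the Lebesgue spaces allowed by $H^{1}\hookrightarrow L^{6}$. This is the first convergence in \eqref{conv}.

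For the momentum I would work through the $w$-system \eqref{cont_w}--\eqref{mom_w}. Since $\sqrt{\varrho_{n}}u_{n}=\sqrt{\varrho_{n}}w_{n}-2\mu\nabla\sqrt{\varrho_{n}}$ and the last term already converges strongly, it is enough to prove $\sqrt{\varrho_{n}}w_{n}\to\sqrt{\varrho}w$ strongly in $L^{2}((0,T)\times\Omega)$, where $\sqrt{\varrho}w:=\sqrt{\varrho}u+2\mu\nabla\sqrt{\varrho}$. This is precisely the Mellet--Vasseur argument applied to \eqref{mom_w}. Using
\[
\nabla\left(\varrho_{n}w_{n}\right)=\sqrt{\varrho_{n}}\left(\sqrt{\varrho_{n}}\nabla w_{n}\right)+2\nabla\sqrt{\varrho_{n}}\otimes\left(\sqrt{\varrho_{n}}w_{n}\right),
\]
the bounds \eqref{ub}, \eqref{rad_rho} and $\sqrt{\varrho_{n}}\in L^{\infty}(L^{6})$ give $\varrho_{n}w_{n}$ bounded in $L^{2}(0,T;W^{1,3/2})$, while \eqref{mom_w} gives $\partial_{t}(\varrho_{n}w_{n})$ bounded in $L^{q}(0,T;W^{-\ell,p})$ for suitable $q>1$, $\ell$, $p$; Aubin--Lions then yields $\varrho_{n}w_{n}\to\varrho w$ strongly in $L^{2}((0,T)\times\Omega)$ and a.e., which identifies the limit velocity $w$ on $\{\varrho>0\}$ and gives $w_{n}\to w$ a.e. there. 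The upgrade of $\sqrt{\varrho_{n}}w_{n}\rightharpoonup\sqrt{\varrho}w$ to a strong limit is then carried out as in \cite{AS_1}: on $\{|w_{n}|>M\}$ one bounds $\varrho_{n}|w_{n}|^{2}$ by $\bigl(\log(1+M^{2}/2)\bigr)^{-1}\varrho_{n}\bigl(1+\tfrac{|w_{n}|^{2}}{2}\bigr)\log\bigl(1+\tfrac{|w_{n}|^{2}}{2}\bigr)$, which is uniformly small by \eqref{MV-est}, while on the complementary set the a.e. convergence and the uniform $L^{\infty}(L^{2})$ bound give convergence by dominated convergence. Hence $\sqrt{\varrho_{n}}w_{n}\to\sqrt{\varrho}w$ and therefore $\sqrt{\varrho_{n}}u_{n}\to\sqrt{\varrho}u$ strongly in $L^{2}((0,T)\times\Omega)$, which is the second convergence in \eqref{conv}.

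With these convergences at hand I would pass to the limit in the weak formulations \eqref{cont_D_q} and \eqref{wf_mom_q} term by term: the convective term is a product of two strongly convergent factors; the terms of type $\sqrt{\varrho_{n}}u_{n}\otimes\nabla\sqrt{\varrho_{n}}$ are strong times strong; $\nu\sqrt{\varrho_{n}}\sqrt{\varrho_{n}}u_{n}$ converges because $\varrho_{n}u_{n}=\sqrt{\varrho_{n}}\cdot\sqrt{\varrho_{n}}u_{n}\to\varrho u$; and the Bohm terms $\nabla\sqrt{\varrho_{n}}\otimes\nabla\sqrt{\varrho_{n}}$ and $\sqrt{\varrho_{n}}\nabla\sqrt{\varrho_{n}}$ converge by the strong $L^{2}$ convergence of $\nabla\sqrt{\varrho_{n}}$ and of $\sqrt{\varrho_{n}}$; the pressure term, when $\varepsilon>0$, vanishes by the interpolation estimate of the proof of Theorem \ref{th_conv_q}. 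Finally, passing to the limit in \eqref{ee_press}, one extracts a weak $L^{2}$ limit of $\sqrt{\varrho_{n}}\nabla u_{n}$, whose symmetric part is the field $\mathcal{S}$ of Definition \ref{ws_quantum}, and weak lower semicontinuity of the remaining quadratic quantities gives \eqref{ee_q}; this shows that $(\varrho,u)$ is a weak solution in the sense of Definition \ref{ws_quantum}. The main obstacle is the Mellet--Vasseur step, i.e. the passage from weak to strong convergence of $\sqrt{\varrho_{n}}w_{n}$: the degeneracy $\mu(\varrho)=\nu\varrho$ means that neither $u_{n}$ nor $w_{n}$ is controlled in any $L^{p}$ on its own, so the convective term $\varrho_{n}w_{n}\otimes w_{n}$ in \eqref{mom_w} cannot be handled without the logarithmic refinement \eqref{MV-est}, whose proof rests on the delicate identity of Lemma \ref{MV-der} (and on absorbing the extra contributions from the $\mu\Delta(\varrho_{n}w_{n})$ term and the non-symmetric cross terms); once the exponents are matched in dimension $d=2,3$, the remaining steps are routine.
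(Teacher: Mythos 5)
Your proposal is correct and follows essentially the same route as the paper: the paper's proof of Lemma \ref{con} simply defers to Lemmas 4.3 and 4.4 of Antonelli--Spirito \cite{AS_1} (dropping, resp. controlling, the pressure term via the uniform bound on $\varrho_\varepsilon^{\gamma}$), and what you have written is precisely a reconstruction of that argument — Aubin--Lions compactness for $\sqrt{\varrho_n}$ and $\varrho_n w_n$, the Mellet--Vasseur logarithmic truncation from \eqref{MV-est} to upgrade $\sqrt{\varrho_n}w_n$ to a strong $L^2$ limit, and term-by-term passage to the limit in the weak formulation with the energy inequality recovered by lower semicontinuity. No gaps beyond the level of detail the cited reference itself supplies.
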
 	 	
\begin{proof}
	The proof follows the same line as in \cite{AS_1}, see Lemma 4.3 and 4.4,  and it could be reproduced dropping the pressure contribution. Indeed, similarly to the capillary case, for $\varepsilon>0$ the pressure contribution appears in the estimate (4.12) of Lemma 4.3. This means that the bound independent by $\varepsilon$ of the quantity $\varrho_{\varepsilon}^{\gamma}$ (see \ref{convergence-1}) is also required in order to take the $\varepsilon$-limit in (4.12). Lemma 4.4 does not require any analysis of the pressure contribution.
	
\end{proof} 	

\bigskip

\end{document}